\documentclass[11pt,a4paper]{article}
\usepackage{amsmath, amscd, amssymb, amsthm}%, latexsym}
\usepackage{array, boldline, makecell, booktabs}
\newcolumntype{M}[1]{>{\raggedright}m{#1}}

\usepackage{xcolor,graphicx}
\usepackage{hyperref}\usepackage{stmaryrd}
\usepackage{url}
\usepackage[all]{xy}
\usepackage{ulem}
\usepackage{tikz}

\allowdisplaybreaks
\newtheorem{theorem}{Theorem}[section]
\newtheorem*{theoremA*}{Theorem A}
\newtheorem*{theoremA1*}{Theorem A'}
\newtheorem{proposition}[theorem]{Proposition}
\newtheorem{lemma}[theorem]{Lemma}
\newtheorem{dlemma}[theorem]{Definition-Lemma}

\newtheorem{corollary}[theorem]{Corollary}
\newtheorem{question}[theorem]{Question}
\theoremstyle{definition} %%pour ne pas mettre remaques etc en italiques

\newtheorem{definition}[theorem]{Definition}
\newtheorem{remark}[theorem]{Remark}
\newtheorem{example}[theorem]{Example}
\newtheorem{notation}[theorem]{Notation}

\newcommand\+[1]{\mathcal{#1}} %fait lettres rondes
\newcommand{\N}{{\mathbb N}}     % Non negative integers
\newcommand{\PP}{{\mathbb P}}    % Primes
\newcommand{\Z}{{\mathbb Z}}     % Integers
\newcommand{\Q}{{\mathbb Q}}     % Rationals
\newcommand{\RR}{{\mathbb R}}    % Reals
\newcommand{\Bool}{{\mathfrak{B}}}   % 
\newcommand{\Latt}{{\mathfrak{L}}} 
\DeclareMathOperator{\freez}{Frz}%\newcommand{\freez}{\textit{Frz}}     

\DeclareMathOperator{\suc}{Suc}%\newcommand{\suc}{\textit{Suc}}     

\newcommand{\Zhat}{\widehat{\Z}}    
\DeclareMathOperator{\val}{Val}%\newcommand{\val}{\textit{Val}}     
\newcommand{\DUO}{{\freez}^*}     
\newcommand{\gen}{\textit{gen}}   
\newcommand{\Suc}{\textit{Suc}}

\newcolumntype{I}{!{\vrule width 1.5pt}}
\definecolor{olive}{rgb}{0.5, 0.5, 0.0}
     
\newcommand{\pat}{\color{red}} 
{\end{list}}%

{\end{list}}%

{\end{list}}%

{\end{list}}%

\begin{document}
%\linenumbers
\setlength{\itemsep}{0pt}
\renewcommand\contentsname{\large \normalfont Title: \bf
Congruence Preservation, Lattices and Recognizability}
%\tableofcontents
\newpage
%\mainmatter
\title {Preorder Preservation versus Congruence Preservation}
%\titlerunning{Congruence Preservation and Recognizability}
\author{
{Patrick C\'egielski\textsuperscript{1}}
\and 
{Ir\`ene  Guessarian\textsuperscript{2,3}}
}
\maketitle

\footnotetext[1]{Emeritus at LACL, EA 4219, Universit\'e Paris-Est Cr\'eteil,  IUT S\'enart-Fontainebleau, France
\texttt{cegielski@u-pec.fr}}
\footnotetext[2]{IRIF,  UMR 8243, Universit\'e Paris 7 Denis Diderot, France \texttt {FirstName.LastName@irif.fr}}
\footnotetext[3]{Emeritus at Sorbonne-Universit\'e, corresponding author.}
\bibliographystyle{plain}

\maketitle
\begin{center}

\end{center}

\begin{abstract} Looking at some monoids and (semi)rings (natural numbers, integers and $p$-adic integers),
and more generally, residually finite algebras (in a strong sense),
we prove  the equivalence of two ways for a function $f$ on such an algebra to behave like the operations of the algebra. The
first way is to preserve congruences or stable preorders.
The second way is to demand that, for any (recognizable) set $L$, a suitably chosen lattice 
(or Boolean algebra) generated by $L$ be closed under preimages by the function $f$.  
\end{abstract}

Keywords: Congruence, Lattice,  Recognizable set.

%:
%%%%%%%%%%%%%%%%%%%%%%%%%%%%
%%%%%%%%%%%%%%%%%%%%%%%%%%%%
%%%%%%%%%%%%%%%%%%%%%%%%%%%%
\section{Motivation and overview of the paper} 
\label{s:intro}
%%%%%%%%%%%%%%%%%%%%%%%%%%%%
%%%%%%%%%%%%%%%%%%%%%%%%%%%%
%%%%%%%%%%%%%%%%%%%%%%%%%%%%
%
%%%%%%%%%%%%%%%%%%%%%%%%%%%%
%\subsection{Motivation} 
%\label{ss:motivation}
%%%%%%%%%%%%%%%%%%%%%%%%%%%%
%
In \cite{cgg14ipl}, 
we proved the following result.

\begin{theoremA*}\label{thm origin}
If $f:\N\longrightarrow\N$ is non decreasing then conditions (1) and (2) are equivalent
\begin{enumerate}
\item  (a) for all $a,b\in\N$, $a-b$ divides $f(a)-f(b)$, and\\
          (b) for all $a\in\N$, $f(a)\geq a$,
\item  every lattice  $\Latt$ of  regular  subsets of $\N$ which is closed under $x\mapsto x-1$, i.e., $L\in\Latt$ implies $\{n\mid n+1\in L\}\in\Latt$, is  also closed under $f^{-1}$,  i.e., for every $L\in\Latt$, $f^{-1}(L)=\{n\in\N\mid f(n)\in L\}\in \Latt$.
\end{enumerate}
\end{theoremA*}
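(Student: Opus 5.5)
We treat the two implications separately, writing $\Latt_L$ for the smallest lattice of subsets of $\N$ that contains the regular set $L$ and is closed under $x\mapsto x-1$. Concretely, $\Latt_L$ consists of the finite unions of finite intersections of the sets $L-k=\{n\mid n+k\in L\}$, $k\ge0$. Since $L$ is regular, i.e.\ ultimately periodic, there are only finitely many distinct sets $L-k$, so $\Latt_L$ is finite and its members are regular. Condition (2) is therefore equivalent to: for every regular $L$, $f^{-1}(L)\in\Latt_L$. This holds because any lattice $\Latt$ as in (2) containing $L$ contains $\Latt_L$, and closure under $f^{-1}$ for all $L\in\Latt$ reduces to $f^{-1}(L)\in\Latt_L\subseteq\Latt$.

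For (1)$\Rightarrow$(2), we use the fact that $f^{-1}(L)$ lies in $\Latt_L$ as soon as it is a union of the "types" determined by $\Latt_L$. The key tool is the characterization of a finite lattice generated by finitely many sets. A set $X$ belongs to the lattice generated by $L-k$, $k\ge0$, if and only if, whenever $m\in X$ and $n$ satisfies $\{k\mid m+k\in L\}\subseteq\{k\mid n+k\in L\}$, then $n\in X$. In other words, $X$ is an up-set for the preorder $m\preceq_L n \iff \forall k\,(m+k\in L\Rightarrow n+k\in L)$. We therefore need to show that $m\preceq_L n$ and $f(m)\in L$ imply $f(n)\in L$. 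Fix a period $p$ and threshold $t$ of $L$. Two properties of $f$ are used:
\begin{itemize}
\item by (1a), $f(n)\equiv f(m)\pmod{n-m}$;
\item by (1a) together with monotonicity and (1b), $f(n)-f(m)=c\,(n-m)$ with $c\ge0$, up to sign conventions.
\end{itemize}
The idea is to write $f(n)=n+k'$ with $k'=f(m)-m+(c-1)(n-m)$ and to find $k\ge0$ with $m+k\equiv f(m)$ and $n+k\equiv f(n)$ modulo $p$, both beyond the threshold $t$. Then $m+k\in L$ transfers to $n+k\in L$ by $m\preceq_L n$, and hence to $f(n)\in L$ by periodicity. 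Condition (1b) guarantees $f(m)-m\ge0$, which is what makes the shift $k$ nonnegative.

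This is where the main obstacle lies. Congruence of $f(n)-f(m)$ modulo $n-m$ alone does not give a common shift modulo $p$. We also need the set of differences $\{k\mid m+k\in L\}$ to behave well under adding multiples of $p$ and of $n-m$. The case $m>n$ and the region below the threshold also need care. My first attempt would use $p\cdot(n-m)$-periodicity: compare $f(m)$ with $f(m+jp)$, which is congruent to $f(m)$ modulo $p$, and push everything beyond $t$ by adding large multiples of $p$, which changes neither membership nor congruences.

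For (2)$\Rightarrow$(1), we test (2) on specific regular sets and then use (1a) and (1b) in turn.
\begin{itemize}
\item \textbf{(1b).} Take $L=\{a,a+1,\dots\}$. Then $\Latt_L$ consists of final segments starting at $\le a$, so $f^{-1}(L)\ni a$ forces $f(a)\ge a$ once we know $f^{-1}(L)$ is such a segment. Concretely, applying (2) to $L=\{f(a)\}$ gives $\Latt_L=\{\emptyset\}\cup\{\{f(a)-k\}\}$. Since $a\in f^{-1}(L)$, we must have $f^{-1}(L)=\{f(a)-k\}\ni a$, so $f(a)\ge a$.
\item \textbf{(1a).} Take $L=f(b)+d\N$ with $d=a-b>0$, together with its shifts. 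These are the residue classes $r+d\N$ restricted beyond some point, so $f^{-1}(L)$ is of the form $(f(b)-k)+d\N$ intersected with a final segment. It contains $b$, hence it contains $b+d=a$ provided the final segment allows it, which it does since $a>b$. Therefore $f(a)\equiv f(b)\pmod d$.
\end{itemize}
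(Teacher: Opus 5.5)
\textbf{There is a genuine gap in (1)$\Rightarrow$(2).} Your reduction is the right one. Since $\Latt_L$ is finite, it suffices to show that $f^{-1}(L)$ is an up-set for $\preceq_L$, i.e.\ that $m\preceq_L n$ and $f(m)\in L$ imply $f(n)\in L$; this is the role of Lemma~\ref{f^-1-dansTreillis} in the paper. But that implication is the entire content of this direction, and you do not prove it. You call it the main obstacle and sketch only a first attempt, and that attempt targets the wrong difficulty.

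The congruence issue you worry about is harmless. If $p$ is the \emph{minimal} eventual period of $L$, then $m\preceq_L n$ already forces $p\mid n-m$: for large $k$, $\{k\mid m+k\in L\}$ and $\{k\mid n+k\in L\}$ are unions of equally many residue classes mod $p$, so inclusion is equality and $n-m$ stabilizes the residue set. Hence $p\mid f(n)-f(m)$ as well. The real problem is the pre-periodic part. The value $f(m)$ may lie below the threshold $t$, where periodicity says nothing, and adding multiples of $p$ to the arguments cannot transport the information $f(m)\in L$. With the natural shift $k=f(m)-m$, one application of $m\preceq_L n$ only yields $f(m)+(n-m)\in L$, which equals $f(n)$ only when $c=1$.

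\textbf{The missing idea is to iterate.} This uses only that $\preceq_L$ is a preorder invariant under translations. Take $m<n$, $d=n-m$, and $f(n)=f(m)+cd$ with $c\ge0$, by (1a) and monotonicity. Translating $m\preceq_L m+d$ gives $m+id\preceq_L m+(i+1)d$ for every $i$, so transitivity gives $m\preceq_L m+cd$. Translating by $f(m)-m\ge 0$, which is where (1b) enters, gives $f(m)\preceq_L f(n)$. The zero shift then turns $f(m)\in L$ into $f(n)\in L$. For $n<m$, argue symmetrically from $n+d\preceq_L n$, translating by $f(n)-n$. This is exactly the submonoid argument of Theorem~\ref{prop:miracle}, where $M^\pm_a\subseteq M^\pm_{f(a)}$.

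\textbf{Two smaller points.} First, the lattices of Theorem~A need not contain $\emptyset$ or $\N$, so your ``iff'' characterization of $\Latt_L$ fails at the boundary. For example, $\N$ is an up-set for $\preceq_{\{0\}}$ but is not in $\Latt_{\{0\}}$. You must therefore also check two things, both of which follow from (1b):
\begin{itemize}
\item each $m\in f^{-1}(L)$ has some $k$ with $m+k\in L$;
\item $f^{-1}(L)\neq\emptyset$ whenever $L$ contains a final segment.
\end{itemize}
Second, your (2)$\Rightarrow$(1) direction is sound, and it is more direct than going through the general theorems. Two descriptions are inaccurate, though. $\Latt_{\{f(a)\}}$ consists of all subsets of $\{0,\dots,f(a)\}$, not just singletons. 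Also, $f^{-1}(f(b)+d\N)$ is in general a finite union of such residue segments, not a single one. The invariants that actually do the work are that every member of $\Latt_{\{f(a)\}}$ lies in $[0,f(a)]$, and that every member of $\Latt_{f(b)+d\N}$ is closed under $x\mapsto x+d$. These give $a\le f(a)$ and $a=b+d\in f^{-1}(L)$ respectively.
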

The aim of this paper is to tackle the following question.
\begin{question}\label{question}
Is it possible to view Theorem A as an instance of a result 
about general algebraic structures 
and not only the particular semiring $\langle \N;+,\times\rangle$?
\end{question}

Observe first that, if $\+A$ is any one of the algebraic structures $\langle\N;\suc\rangle$, or  the semiring $\langle\N;+,\times\rangle$, or  the semigroup $\langle\N;+\rangle$, we have:

1) condition (1)(a) of Theorem A just tells that the function $f$ is 
 congruence preserving in $\+A$ (cf. Definition \ref{def:congCompat 1},
 Corollary~\ref{+CPimpliesXCPN}, Theorem \ref{thm:CPsurN}).
 
 2)  Kleene's celebrated theorem insures that 
the notion of regular subset \footnote{Regular subsets of an algebra $\+A$ with a binary operation are defined as the least family of subsets containing the singletons and closed under union and the Kleene star  (iteration on the binary operation).} of language theory 
coincides with the algebraic notion of recognizable subset in $\+A$
(cf. Definition~\ref{def:rec}, Proposition~\ref{p:folk N rec}).
This allows to restate condition (2) of Theorem A in terms of recognizability.

 Theorem A above can thus  be reformulated as follows.
\begin{theorem}\label{thm:ipl1}
Let $\+N$ be  $\langle\N;\suc\rangle$, or the semigroup $\langle\N;+\rangle$ 
or the semiring $\langle\N;+,\times\rangle$.
\\
If $f:\N\longrightarrow\N$ is non decreasing %(i.e., $x<y \Longrightarrow f(x)<f(y)$),
 then conditions (1') and (2') below are equivalent:
\itemsep 0pt
\begin{itemize}
\item[(1')] 
$f$ is  $\+N$-congruence preserving and, for all $a\in\N$, $f(a)\geq a$,
\item[(2')] 
for every $\+N$-recognizable subset $L$ of \ $\N$, the smallest  lattice of subsets of $\N$ containing $L$  and closed under $x\mapsto x-1$ is  also closed under $f^{-1}$.
\end{itemize}
\end{theorem}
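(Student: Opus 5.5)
This is a reformulation of Theorem A, so the plan is to show that each of (1') and (2') is equivalent to the corresponding condition of Theorem A and then invoke Theorem A from \cite{cgg14ipl}. Two translations are needed, and both are available from results stated later in the paper, which I will take as given.

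\textbf{Translation of (1').} For each of the three structures, $\+N$-congruence preservation is equivalent to condition (1)(a), i.e.\ $a-b \mid f(a)-f(b)$ for all $a,b$.
For $\langle\N;+\rangle$ and $\langle\N;+,\times\rangle$ the argument goes as follows.
\begin{itemize}
\item The congruences of $\langle\N;+\rangle$ are the equalities, together with the relations $x\equiv y$ iff $x=y$ or ($x,y\ge m$ and $x\equiv y \bmod p$).
\item If $f$ preserves these, then taking $m=\min(a,b)$ and $p=|a-b|$ gives $f(a)\equiv f(b)\pmod{|a-b|}$.
\item Conversely, divisibility gives preservation of the pure congruences modulo $p$. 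For the threshold part I use $f(a)\ge a$, which the hypothesis supplies in both directions of the equivalence since it is part of (1'), and which is implied by (2') through Theorem A. Alternatively, Corollary~\ref{+CPimpliesXCPN} and Theorem~\ref{thm:CPsurN} give the equivalence directly.
\end{itemize}
The congruences of $\langle\N;\suc\rangle$ coincide with those of $\langle\N;+\rangle$, because $x+n=\suc^n(x)$. For the semiring, Corollary~\ref{+CPimpliesXCPN} shows that $+$-congruence preservation implies $\times$-congruence preservation. Hence all three notions of congruence preservation agree, and (1') is exactly (1).

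\textbf{Translation of (2').} By Proposition~\ref{p:folk N rec} (Kleene), the $\+N$-recognizable subsets of $\N$ are exactly the regular (ultimately periodic) sets, and this holds for each of the three structures. Condition (2) quantifies over every lattice $\Latt$ of regular sets closed under $x\mapsto x-1$. Condition (2') quantifies only over the smallest such lattice $\Latt_L$ generated by a single regular $L$. Note that $\Latt_L$ consists of regular sets, since regular sets are closed under finite unions, finite intersections and preimages by $x\mapsto x+1$.
\begin{itemize}
\item (2)$\Rightarrow$(2'): apply (2) to $\Latt_L$.
\item (2')$\Rightarrow$(2): let $\Latt$ be such a lattice and $L\in\Latt$. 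Then $\Latt_L\subseteq\Latt$, and by (2') we get $f^{-1}(L)\in\Latt_L\subseteq\Latt$.
\end{itemize}

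\textbf{Conclusion and main obstacle.} Combining the two translations, (1')$\iff$(1)$\iff$(2)$\iff$(2') for non-decreasing $f$, the middle equivalence being Theorem A. The only delicate step is the congruence-preservation translation. There one must describe the congruences of $\langle\N;+\rangle$ precisely, including the non-cancellative threshold congruences, and check that divisibility plus $f(a)\ge a$ preserves those as well. I would handle that case by showing that if $x,y\ge m$, then $f(x),f(y)\ge m$, and combining this with the congruence modulo $p$.
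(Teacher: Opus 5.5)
Your proposal is correct and follows the paper's own justification: (1') is matched with (1) via Corollary~\ref{+CPimpliesXCPN} and Theorem~\ref{thm:CPsurN}, and (2') is matched with (2) via Proposition~\ref{p:folk N rec}, with Theorem~A doing the real work. On the (1') side, the clause $f(a)\ge a$ appears in both (1) and (1'), so you never need (2') to supply it.

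Your explicit passage between ``every lattice of regular sets'' and ``the lattice generated by a single recognizable $L$'' fills in a step the paper leaves implicit. It is sound as long as ``lattice'' means the same thing on both sides. Here that reading must not require $\N$ to belong to the lattice, even though the paper's standing convention is that lattices are bounded. Otherwise every constant $f$ would satisfy (2') while violating $f(a)\ge a$.
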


Observe next that in Theorem  \ref{thm:ipl1} {\it (1')} we assume $f$ to be non decreasing and such that $f(a)\geq a$: so we have to study what is relevant to the order on $\N$, hence we will also consider  preorder preserving functions.
Observe finally that Theorem \ref{thm:ipl1} {\it (2')} must be modified if  $L$ is not recognizable. Consider the free monoid  $\+A=\langle \{a,b\}^*;\cdot\rangle$, $L=\{a^nba^n|n\in\N\}$, $f(x)=xbx$: $f$ is congruence preserving and $f^{-1}(L)=a^*$.  The  preimages of $L$ under division by words of $\{a,b\}^*$ consist of:  for $k\in\N$, $L_k=\{ a^kb a^{n-k}|n\in\N\}, \  L'_k=\{ a^{n-k}b a^k|n\in\N\}$,  and the singletons $\{a^n\}$,  $n\in\N$.  We cannot get $f^{-1}(L)=a^*$
 by finite unions and intersections of those: this leads to consider infinite unions and complete lattices. 
It turns out, that, for general algebras, preservation of preorders by $f$ is related to closure of some lattices by $f^{-1}$ while congruence preservation is related to closure of some boolean algebras by $f^{-1}$.  Figure \ref{fig:resume} gives a short summary of our results:  $\Latt_{\+A}^\infty(L)$, $\Bool_{\+A}^\infty(L)$, 
$\Latt_{\+A}(L)$ and  $\Bool_{\+A}(L)$,
respectively  denote 
 the smallest 1) complete lattice, 2) complete Boolean algebra, 
3) bounded lattice  and 
4)  Boolean algebra 
which
contain the subset $L$ of $A$
and are closed under preimage 
 by the unary functions $\freez^{*}(\+A)$ derived from the operations of the algebra (cf. Definition~\ref{not:freeze}).

 \begin{figure}[h]
 {\scriptsize
 \begin{tabular}{Ic|c|cIc|cI}
 \Xhline{1pt}
 $f$  preserves  &preorders&congruences&finite index&finite index\\
 &&&preorders&congruences\\
 \Xhline{1pt}
 $ \forall\;L\subseteq A$&$f^{-1}(L)\in\Latt_{\+A}^\infty(L)$&  $f^{-1}(L)\in\Bool_{\+A}^\infty(L)$&&\\
 \hline
$ \forall\;L$ recognizable&&&$f^{-1}(L)\in\Latt_{\+A}(L)$&  $f^{-1}(L)\in\Bool_{\+A}(L)$\\
 \Xhline{1pt}
 \end{tabular}
 }
 \caption{\small How to read  the  table: e.g., the rightmost bottom entry means ``$f$ preserves finite index congruences if and only if for every recognizable $L\subseteq A$, $f^{-1}(L)\in\Bool_{\+A}(L)$".}\label{fig:resume}
\end{figure}

In Section 3 we will  state precisely and prove the results sketched in 
Figure \ref{fig:resume}. We will also show that, for some algebras, stronger results hold: e.g., in the free monoid,  congruence preservation coincides with preorder preservation cf.Theorem \ref{th:freeMLat}).  %: in that case  congruence preservation implies that  $f^{-1}(L)$ belongs to the lattice $\Latt_{\+A}^\infty(L)$. 
 Stronger results also hold for some residually finite algebras and groups.
 
Section \ref{s frying pan}, 
revisits the above results: for algebras of non negative integers,  $\langle\N;+\rangle$ (or $\langle\N;+,\times\rangle$), functions preserving stable preorders  coincide with non decreasing congruence preserving functions (Theorem~\ref{prop:miracle}).  In Sections \ref{s Z} and \ref{s ZpZhat} we study the cases of integers  and $p$-adic integers.

%%%%%%%%%%%%%%%%%%%%%%%%%%%%
%%%%%%%%%%%%%%%%%%%%
\section{Useful definitions and tools}\label{s:def}
%%%%%%%%%%
%%%%%%%%%%

%We recall necessary definitions, notations and some basic results.

%
%%%%%%%%%%%%%%%%%
\subsection{Congruence  and stable (pre)order preservation}
\label{ss cong stable preorder}
%%%%%%%%%%%%%%%%
%
\begin{notation} We denote $\+A=\langle A;\Xi\rangle$ the {\em algebra}
consisting of the nonempty carrier set $A$ together with a set of operations $\Xi$, each $\xi\in\Xi$ being  a mapping $\xi\colon A^{ar(\xi)}\to A$ where $ar(\xi)\in\N$ is the arity of $\xi$.
\end{notation}
\begin{definition}\label{def:fpreserveR}
Let  $\+A=\langle A;\Xi\rangle$ be an algebra.
\\
1. Let $f$ be a function $f\colon A^p\longrightarrow A$. 
A binary relation $\rho$ on  $A$ 
is said to be {\em compatible} with $f$ if, 
for all elements $x_1,\ldots,x_{p}, y_1,\ldots,y_{p}$ in $A$,
we have
\begin{equation}\label{eq:preserveR}
(x_1\rho\, y_1\ \wedge\cdots\wedge\ x_{p}\rho\, y_{p})\quad
  \Longrightarrow \quad f(x_1,\ldots,x_p)\;\rho\, f(y_1,\ldots,y_p).
\end{equation}  
2. A  binary relation $\rho$ on  $A$ is said to be {\em $\+A$-stable}  
 if it is compatible with each  operation $\xi\in\Xi$.
 i.e., \eqref{eq:preserveR} holds with $\xi$ in place of $f$ for every $\xi\in\Xi$.
\\
As is usual, an $\+A$-stable equivalence relation on $A$ 
is  called an {\em $\+A$-congruence}. 
\end{definition} 

\noindent The {\em substitution property}, 
introduced by Gr\"atzer, 1962 \cite{gratzer62} %(cf. also \cite{gratzer} page 44),
has since been renamed {\em congruence preservation} in the literature.  
We shall also use extensions dealing with preorders (cf. \S\ref{ss:spp}) %and with orders (cf. Proposition~\ref{p:stable total order})
instead of congruences.
\begin{definition}\label{def:congCompat 1}
Let $\+A=\langle A;\Xi\rangle$ be an algebra.

1) A function $f\colon A^p\longrightarrow A$
is {\em $\+A$-congruence preserving} if  all $\+A$-congruences are compatible with $f$, i.e., for every congruence $\sim$ on  $\+A$ and all elements
$x_1,\ldots,x_{p}, \allowbreak  y_1,\ldots,y_{p}$ in $A$,
\begin{equation}\label{eq:cp}
(x_1\sim y_1\ \wedge\cdots\wedge\ x_{p}\sim y_{p}
)\quad  \Longrightarrow \quad 
f(x_1,\ldots,x_p)\sim f(y_1,\ldots,y_p).
\end{equation}  

2) A function $f\colon A^p\longrightarrow A$ is {\em $\+A$-stable 
 preorder} (resp. {\em order})  {\em preserving} 
if all $\+A$-stable preorders (resp. orders) are compatible with $f$, i.e., for every stable preorder (resp. order) $\preceq$ on  $\+A$ and all elements
$x_1,\ldots,x_{p}, y_1,\ldots,y_{p}$ in $A$,
\begin{equation}\label{eq:spp}
(x_1\preceq y_1\ \wedge\cdots\wedge\ x_{p}\preceq y_{p})\quad
  \Longrightarrow \quad f(x_1,\ldots,x_p)\preceq f(y_1,\ldots,y_p).
\end{equation}
When the algebra $\+A$ is clear from the context,  $f$ is simply said to be {\em congruence preserving} (resp. {\em stable preorder} (resp. {\em order})  {\em preserving}).
\end{definition}
 \begin{remark}  Functions preserving stable preorders also preserve congruences; the converse is false: stable preorder preservation is   strictly stronger  than congruence preservation. Indeed, Theorem~\ref{prop:miracle} and Proposition \ref{p:cp non monotone}
show that in $\langle\N;+\rangle$ there are functions which preserve congruences  but not  stable preorders.
%2) As is well-known, congruences are exactly 
%the {\em kernels} of homomorphisms $\varphi\colon A\rightarrow B$  from $\+A$ onto some algebra $\+B$, i.e., the preimages $\{(x,y)\mid\varphi(x)=\varphi(y)\}$ under $\varphi$ of equality on $B$.  This allows to translate congruence preservation  in terms of morphisms. For stable preorder (resp. order) preservation it suffices to consider monotone increasing morphisms.
\end{remark}

%%%%%%%%%%%%%%%%%%%%
\subsection{Reducing arity $n$ to arity one:
1-freezifications}
\label{ss reducing to arity 1}
%%%%%%%%%%%%%%%%%%%%

Congruence preservation of a function $f$ of arbitrary arity can be characterized via congruence preservation of suitably chosen unary restrictions of the function  $f$. 
Reducing to unary functions  will enable us to simplify some proofs; it will also be a key point in the definition of 
recognizability, syntactic congruences and syntactic preorders
for general algebras.

The following notion goes back to S\l{}omi\'nski, 1974,
cf. Definition 1 in \cite{Slominski74}.
\begin{definition}\label{not:freeze}
1.  The {\em 1-freezifications} of $f\colon A^{n}\to A$, are equal to $f$ if $n=1$, and if $n\geq2$, they are the  unary functions for $i\in\{1,\ldots,n\}$,
\begin{equation*}%\label{eq f i vec c}
f^{[i,\vec{c}]}(x)=f(c_1,\ldots,c_{i-1},x,c_{i+1},\ldots,c_{n})
\end{equation*}
where  $\vec{c}=(c_1,\ldots,c_{i-1},c_{i+1},\ldots,c_n)\in A^{n-1}$,
i.e., all but one argument of $f$ are frozen. 
The set of 1-freezifications of $f$ is denoted $\freez(f)$.

2. If $\+F$ is a family of operations over a set $A$ with arbitrary arities,
the 1-freezification $\freez(\+F)$ of $\+F$
is the family of 1-freezifications of all operations in $\+F$
and 
the closure under composition
of $\freez(\+F)$ is denoted by $\freez^{*}(\+F)$.  By convention, the identity belongs to $\freez^{*}(\+F)$.
In particular, if $\+A=\langle A;\Xi\rangle$ is an algebra,
we shall consider two related algebras:  %$\freez(\+A)$ and  $\freez^{*}(\+A)$ 
\begin{equation}\label{def:gen}
\freez(\+A)= \langle A;\freez(\Xi)\rangle
\qquad
\freez^{*}(\+A)= \langle A;\freez^{*}(\Xi)\rangle
\end{equation}
\end{definition}

\begin{figure}[h]
\center
\fbox{
$\begin{array}{lrcl}
\fbox{\textit{Over $\N$}}&\freez(\suc)&=&\{\suc\}\\
&{\freez^{*}(\suc), \freez(+), \freez^{*}(+)}&=&\{\textit{translations }x\mapsto x+n \mid n\in\N\}\\
&{\freez(\times) = \freez^{*}(\times)}
&=&\{\textit{homotheties }x\mapsto nx \mid n\in\N\}\\
&\freez(\{+,\times\})
&=&\{\textit{translations and homotheties}\}\\
&\freez^{*}(\{+,\times\})
&=&\{\textit{affine maps }x\mapsto nx+p \mid n,p\in\N\}
\\
\cline{2-4}
\fbox{\textit{Over $\Sigma^*$}}&\freez(\cdot)
&=& \{x\mapsto ux,\ x\mapsto xv \mid u,v\in\Sigma^*\}
\\
&{\freez^{*}(\cdot)}
&=& \{x\mapsto uxv \mid u,v\in\Sigma^*\}
\end{array}$}
\caption{Examples of $\freez(\Xi)$, $\freez^{*}(\Xi)$}\label{fig-freez}
\end{figure}
1-freezifications are clearly linear polynomials defined by affine terms, see Schneider and Zumbr\"agel (2017) \cite{SZ}.
\begin{example}\label{ex freez} Figure \ref{fig-freez} gives examples of algebras $\freez(\+A)$ and $\freez^{*}(\+A)$, for some algebras with carrier set $\N$ and for the free monoid with  concatenation  $\cdot$. 
In general, neither $\freez(\+A)$ nor $\freez^{*}(\+A)$ 
coincide with what are usually called 
``affine'' or ``polynomial'' functions of the algebra
(and should more adequately be called ``term'' functions).
For instance, with $\langle\N;+\rangle$, $\langle\N;\times\rangle$
and $\langle\N;+,\times\rangle$ the  usual 
``affine'' or ``polynomial'' functions are respectively
the usual affine functions, monomial functions $x\mapsto nx^{p}$
and the  polynomial functions in $\N[x]$.
With $\langle\Sigma^*,\cdot\rangle$ the usual ``polynomial'' functions are of the form
$x\mapsto u_{0}xu_{1}xu_{2}\cdots xu_{n}$, the $u_{i}$'s being arbitrary words: the function $x\mapsto u_{0}xu_{1}xu_{2}$ is {\em neither} in $\freez(\langle\Sigma^*,\cdot\rangle)$ {\em nor} in $\freez^*(\langle\Sigma^*,\cdot\rangle)$.
\end{example}
The following result is the core 
of Theorem 1 in S\l{}omi\'nski \cite{Slominski74}
(trivially extended from equivalences to preorders).
\begin{lemma}\label{lem:de n a 1}
A preorder $\preceq$ on $A$ 
is compatible with $f\colon A^n\longrightarrow A$ 
if and only if 
it is compatible with all $n$ of its  1-freezifications.
\end{lemma}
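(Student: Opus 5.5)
The plan is to prove the two implications separately. The only tools needed are reflexivity and transitivity of the preorder $\preceq$.

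\emph{Necessity.} Suppose $\preceq$ is compatible with $f$, and fix a 1-freezification $f^{[i,\vec c]}$. Let $x\preceq y$. Reflexivity gives $c_j\preceq c_j$ for every $j\neq i$. Apply \eqref{eq:preserveR} to the tuples $(c_1,\ldots,c_{i-1},x,c_{i+1},\ldots,c_n)$ and $(c_1,\ldots,c_{i-1},y,c_{i+1},\ldots,c_n)$. This yields $f^{[i,\vec c]}(x)\preceq f^{[i,\vec c]}(y)$, so $\preceq$ is compatible with $f^{[i,\vec c]}$. The case $n=1$ is trivial, since there $f$ is its own unique freezification.

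\emph{Sufficiency.} Assume $\preceq$ is compatible with every $f^{[i,\vec c]}$, and let $x_j\preceq y_j$ for $j=1,\ldots,n$. We change the arguments one at a time and use a telescoping chain. For $k=0,\ldots,n$, set
\[
z_k=f(y_1,\ldots,y_k,x_{k+1},\ldots,x_n),
\]
so that $z_0=f(x_1,\ldots,x_n)$ and $z_n=f(y_1,\ldots,y_n)$.

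For each $k\ge 1$, take the freezification with $i=k$ and frozen vector $\vec c=(y_1,\ldots,y_{k-1},x_{k+1},\ldots,x_n)$. Then $z_{k-1}=f^{[k,\vec c]}(x_k)$ and $z_k=f^{[k,\vec c]}(y_k)$. Compatibility with this freezification, together with $x_k\preceq y_k$, gives $z_{k-1}\preceq z_k$. Transitivity along $z_0\preceq z_1\preceq\cdots\preceq z_n$ then gives $f(\vec x)\preceq f(\vec y)$.

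The only step needing care is the sufficiency direction. We must check that the freezifications used genuinely belong to $\freez(f)$, which holds because $\vec c$ ranges over all of $A^{n-1}$. The argument also needs transitivity, which is why the statement is restricted to preorders rather than arbitrary relations; for congruences, the equivalence case of Słomiński, it is the same argument.
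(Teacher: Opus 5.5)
Your proposal is correct and follows essentially the same route as the paper: reflexivity of $\preceq$ gives necessity, and sufficiency uses the same telescoping chain that replaces the arguments one at a time, with the freezification at position $k$ and frozen vector $(y_1,\ldots,y_{k-1},x_{k+1},\ldots,x_n)$, closed up by transitivity. The paper's displayed chain $f(a_1,\ldots,a_n)\preceq f(b_1,a_2,\ldots,a_n)\preceq\cdots\preceq f(b_1,\ldots,b_n)$ is exactly your sequence $z_0\preceq z_1\preceq\cdots\preceq z_n$.
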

\begin{proof} 
The left to right implication
is a trivial use of the reflexivity of $\preceq$. 
The right to left implication uses the transitivity of $\preceq$. 
Indeed,
if $\preceq$ is compatible with the $n$ 1-freezifications
\begin{equation*}
f^{[1,a_2,\ldots,a_{n}]},
f^{[2,b_1,a_{3}\ldots,a_{n}]},
f^{[3,b_1,b_{2},a_{4},\ldots,a_{n}]},
\ldots,
f^{[n-1,b_1,b_{2},\ldots,b_{n-2},a_{n}]},
f^{[n,b_1,b_{2},\ldots,b_{n-1}]}
\end{equation*}
and if $a_{1}\preceq b_{1}$,\ldots, $a_{n}\preceq b_{n}$
we have
\begin{align*}
\begin{array}{clclcl}
&f(a_1,a_2,\ldots,a_{n})
&=&f^{[1,a_2,\ldots,a_{n}]}(a_{1})
&\preceq& f^{[1,a_2,\ldots,a_{n}]}(b_{1})
\\
=& f(b_{1},a_2,\ldots,a_{n})
&=&f^{[2,b_{1},a_{3},\ldots,a_{n}]}(a_{2})
&\preceq& f^{[2,b_{1},a_{3},\ldots,a_{n}]}(b_{2})
\\
=& f(b_{1},b_2,a_{3},\ldots,a_{n})
&=&f^{[3,b_{1},b_{2},a_{4},\ldots,a_{n}]}(a_{3})
&\preceq& f^{[3,b_{1},b_{2},a_{4},\ldots,a_{n}]}(b_{3})
\\
\vdots& &\vdots& &\vdots&\\
=& f(b_{1},b_2,\ldots,b_{n-1},a_{n})
&=&f^{[n,b_{1},b_2,\ldots,b_{n-1}]}(a_{n})
&\preceq& f^{[n,b_{1},b_2,\ldots,b_{n-1}]}(b_{n})
\\
=& f(b_{1},b_2,\ldots,b_{n-1},b_{n})
\end{array}
\end{align*}
hence $\preceq$ is compatible with $f$. 
\end{proof}
%
%As a straightforward corollary, we have
%
%{corollary}\label{cor:de n a 1}
%1. The congruences of an algebra $\+A=\langle A;\Xi\rangle$  are exactly those of  $\freez(\+A) = \langle A;\freez(\Xi)\rangle$.
%
%2. $f\colon A^n\longrightarrow A$ is $\+A$-congruence preserving  (resp. $\+A$-stable (pre)order preserving)  if and only if it is  $\freez(\+A)$-congruence preserving  (resp.  $\freez(\+A)$-stable (pre)order preserving).
%\end{corollary}

%%%%%%%%%%%%%%%%%%%%%
\subsection{Syntactic congruence and syntactic preorder}% and morphisms}
\label{ss syntactic cong preorder}
%%%%%%%%%%%%%%%%%%%%
Using the above notion of $*$-freezification, we recall the classical
notions of 
 {\em syntactic congruence} and {\em syntactic preorder}
 associated to a subset $L$ of an arbitrary algebra.
 This first appeared in 
Sch\"utzenberger, 1956 \cite{Schutzenberger56} p.10
(see also Almeida \cite{almeida},
Mezei \& Wright, 1967 \cite{MezeiWright67}, 
Pin, 2022 \cite{pin22}, S\l{}omi{\'n}ski \cite{Slominski74}).
 
\begin{definition} \label{syntacticL_congruence}  
Let $\+A=\langle A;\Xi\rangle$ be an algebra. With any $L\subseteq A$   are associated two syntactic relations $\leq_L $ and $\sim_L$  respectively called {\em syntactic preorder} and  {\em syntactic congruence} associated with $L$.
\begin{subequations}
\begin{align}
x\leq_L y
\quad &{\text{ if and only if } } \quad
\forall \gamma\in  \freez^{*}(\+A)\quad (\gamma(x)\in L \Longrightarrow \gamma(y)\in L)\label{eq:sord}\\
x\sim_L y
\quad &{\text{ if and only if } } \quad
\forall \gamma\in  \freez^{*}(\+A)\quad(\gamma(x)\in L \Longleftrightarrow \gamma(y)\in L)\label{eq:scong}
\end{align}
\end{subequations}
\end{definition} 
\begin{example} Let $\+N=\langle \N,\suc\rangle$, let $\leq$ be the usual order on $\N$, let $L=\{9\}$ and for $k\in\N$ let $M_k=\{n \mid n \leq_L k\}$. Then
$M_k =\begin{cases}\N& \text{if}\quad k\geq 10,\\
                                  \{k\}& \text{if}  \quad k\leq 9.
                                   \end{cases}$
\end{example}
From now on, preorders other than the syntactic ones will be denoted $\preceq$.
\begin{lemma}
Relations $\leq_L $ and $\sim_L$  are a stable preorder and a congruence. 
\end{lemma}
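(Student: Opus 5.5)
The plan is to check the preorder axioms directly from the definition of $\leq_L$, and then to obtain stability from closure of $\freez^{*}(\+A)$ under composition together with Lemma~\ref{lem:de n a 1}. The statements for $\sim_L$ will follow from those for $\leq_L$, because $x\sim_L y$ holds if and only if $x\leq_L y$ and $y\leq_L x$.

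\emph{Preorder.} Reflexivity is immediate: for every $\gamma$, the implication $\gamma(x)\in L\Rightarrow\gamma(x)\in L$ holds trivially. For transitivity, suppose $x\leq_L y$ and $y\leq_L z$, and fix $\gamma\in\freez^{*}(\+A)$. If $\gamma(x)\in L$, then $\gamma(y)\in L$, and hence $\gamma(z)\in L$, so $x\leq_L z$. The relation $\sim_L$ is the intersection of $\leq_L$ with its converse, so it is reflexive and transitive, and it is symmetric by construction. Thus $\sim_L$ is an equivalence relation.

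\emph{Stability.} By Lemma~\ref{lem:de n a 1}, applied to the preorder $\leq_L$ and to each $\xi\in\Xi$, it suffices to show that $\leq_L$ is compatible with every 1-freezification $\delta\in\freez(\Xi)$. So let $x\leq_L y$ and let $\gamma\in\freez^{*}(\+A)$. Since $\freez^{*}(\+A)$ is closed under composition, $\gamma\circ\delta\in\freez^{*}(\+A)$. Therefore $\gamma(\delta(x))\in L$ implies $\gamma(\delta(y))\in L$, which gives $\delta(x)\leq_L\delta(y)$. Compatibility of $\sim_L$ then follows, either by applying the same argument to both directions or by using Lemma~\ref{lem:de n a 1} for the equivalence $\sim_L$, which is in particular a preorder.

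There is no real obstacle here. The only point requiring care is that the 1-freezifications $\delta$ belong to $\freez^{*}(\+A)$ and that this set is closed under composition. Both facts hold by Definition~\ref{not:freeze}, and they are exactly what allows the reduction from arity $n$ to arity one via Lemma~\ref{lem:de n a 1}.
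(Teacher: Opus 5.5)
Your proof is correct and is essentially the standard textbook argument that the paper defers to (Almeida, Pin). The preorder axioms come straight from the definition, and compatibility with each 1-freezification $\delta$ follows because $\gamma\circ\delta\in\freez^{*}(\+A)$ for every $\gamma\in\freez^{*}(\+A)$. Lemma~\ref{lem:de n a 1} then lifts this to operations of arbitrary arity, and $\sim_L$ is handled as the intersection of $\leq_L$ with its converse; the only case you leave implicit is nullary operations (constants), for which compatibility is just reflexivity.
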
      
The proof of the above Lemma
%and that of the next Proposition~\ref{p:syntactic largest},
along with the material of the next section~\ref{ss rec}
can be found in  textbooks on algebraic language theory, for instance  Almeida's book  \cite{almeida} Sections 0.2, 3.1, or
in Pin's lectures \cite{pin22}, chap.IV, Sections 4.1, 5.3.

\begin{definition}\label{df:satureCong} 1) A set is said to be {\em  saturated with respect to an equivalence} if it is  a union of equivalence classes. 

2) A set $L$ is said to be an {\em{initial segment for}}  a  preorder $\preceq$ if  whenever $b\in L$ and  $x\preceq b$ then $x\in L$.
\end{definition}

%{\pat PAS D ACCORD POUR SUPPRIMER : c est a cause de l'absence du 2) de cette def. que referee 2 a cru que notre resultat etait faux}
%\begin{proposition}\label{p:syntactic largest}
%Let $L$ be a subset of an algebra $\+A$.

%1) If $L$ is saturated for a congruence $\equiv$ of $\+A$ 
%then $\equiv$ refines the syntactic congruence $\sim_L $ of $L$, i.e., $x\equiv y$ implies $x\sim_L  y$.

%2) If $L$ is an initial segment of a stable preorder $\preceq$ of $\+A$ , then $\preceq$ refines the syntactic preorder $\leq_L $ of $L$, i.e., $x\preceq y$ implies $x\leq_L  y$.
%\end{proposition}
%

%%%%%%%%%%%%%%%%%%%%
\subsection{Recognizability}\label{ss rec}
%%%%%%%%%%%%%%%%%%%%
We recall the notion of recognizability in general algebra, 
which goes back to Mezei \& Wright, 1967 \cite{MezeiWright67}.
First, let us recall another classical notion.
\begin{definition}
A congruence on $\+A$ with finitely many equivalence classes 
is said to have {\em finite index}.
Similarly,
an $\+A$-stable preorder $\preceq$ has {\em finite index} if the largest  congruence $\sim$ included in $\preceq$ 
(where $x\sim y$ is $(x\preceq y \wedge y\preceq x)$)
has {\em finite index}.
\end{definition}

%\begin{remark}
%Since the largest  congruence containing an order is the equality relation, the above notion is trivial for orders, it just means that the algebra is finite.
%\end{remark}

\begin{definition} [Recognizability] \label{def:rec}
%Let  $\Xi=(\xi_i)_{i\in I}$ be a signature.   $\xi_i$ %($\xi_i$ is $n_i$-ary).
Given  an algebra $\+A=\langle A; \Xi\rangle$, %with signature $\Xi$,
a  subset $L$ of $A$ is said to be  %$\Xi$-{\em recognizable} (or  
$\+A$-{\em recognizable}
if there exists a finite algebra $\+M=\langle M;\Theta\rangle$ with the same signature  as $\+A$
and a surjective morphism $\varphi:\+A\to \+M$ such that 
$L =  \varphi^{-1}(\varphi(L))$, i.e., $L = \varphi^{-1}(T)$ for some subset $T$ of $M$.
\end{definition}

\noindent Recognizability  is related to congruences by Lemma  \ref{l:rec and congru}, Almeida\cite{almeida}, p.4,  Pin\cite{pin22}. 

\begin{lemma}\label{l:rec and congru}
Let $L$ be a subset  of $A$. The following are equivalent:\\
1)  $L$  is $\+A$-recognizable, \\
2)  $L$  is saturated with respect to some finite index congruence of $\+A$,\\
3) the syntactic congruence $\sim_L$ of $L$ has finite index.
\end{lemma}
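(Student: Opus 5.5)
I will prove the cycle of implications $1)\Rightarrow 2)\Rightarrow 3)\Rightarrow 1)$, using the syntactic congruence $\sim_L$ of Definition~\ref{syntacticL_congruence} as the link between the three conditions.

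For $1)\Rightarrow 2)$, let $\varphi\colon\+A\to\+M$ be a surjective morphism onto a finite algebra with $L=\varphi^{-1}(T)$. I take its kernel $x\equiv y \iff \varphi(x)=\varphi(y)$. This is an equivalence relation. It is $\+A$-stable because $\varphi$ commutes with each operation: if $x_i\equiv y_i$ for all $i$, then $\varphi(\xi(\vec x))=\theta(\varphi(\vec x))=\theta(\varphi(\vec y))=\varphi(\xi(\vec y))$. Its classes are in bijection with $M$, so it has finite index. Finally, $L$ is the union of the classes $\varphi^{-1}(t)$ for $t\in T$, so $L$ is saturated.

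For $2)\Rightarrow 3)$, let $\equiv$ be a finite index congruence such that $L$ is saturated for $\equiv$. I will show that $x\equiv y$ implies $x\sim_L y$; then $\sim_L$ has at most as many classes as $\equiv$, hence finite index. The key point is that $\equiv$ is compatible with every $\gamma\in\freez^{*}(\+A)$.
\begin{itemize}
\item For a 1-freezification $\xi^{[i,\vec c]}$, compatibility follows from stability of $\equiv$ together with reflexivity on the frozen arguments $c_j\equiv c_j$; this is the easy direction of Lemma~\ref{lem:de n a 1}.
\item Compatibility is preserved under composition, and trivially holds for the identity.
\end{itemize}
So if $x\equiv y$ then $\gamma(x)\equiv\gamma(y)$. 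Since $L$ is saturated, $\gamma(x)\in L\iff\gamma(y)\in L$, which is exactly $x\sim_L y$.

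For $3)\Rightarrow 1)$, I take $\+M=A/{\sim_L}$ with the natural projection $\varphi$. The quotient algebra is well defined because $\sim_L$ is a congruence (the preceding Lemma), and it is finite by hypothesis. The operations are $\theta([x_1],\ldots,[x_n])=[\xi(x_1,\ldots,x_n)]$, so $\varphi$ is a surjective morphism. It remains to check that $L$ is saturated for $\sim_L$, which I will do with $\gamma=\mathrm{id}\in\freez^{*}(\+A)$: if $x\in L$ and $x\sim_L y$, then $y\in L$. Hence $L=\varphi^{-1}(\varphi(L))$, and $L$ is $\+A$-recognizable.

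The only step with real content is compatibility of congruences with all of $\freez^{*}(\+A)$ in $2)\Rightarrow 3)$, and it reduces to Lemma~\ref{lem:de n a 1} plus closure under composition. The rest is standard bookkeeping on quotient algebras.
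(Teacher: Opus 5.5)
Your proof is correct and is the standard textbook argument that the paper itself defers to (Almeida, Sections 0.2 and 3.1; Pin, Chapter IV) rather than proving: you use the kernel of the recognizing morphism for $1)\Rightarrow 2)$, refinement of $\sim_L$ by any congruence saturating $L$ via compatibility with $\freez^{*}(\+A)$ for $2)\Rightarrow 3)$, and the finite quotient $A/{\sim_L}$ for $3)\Rightarrow 1)$. Your key step for $2)\Rightarrow 3)$ is the same one the paper relies on in Lemma~\ref{bool:saturated}, namely that congruences are compatible with every map in $\freez^{*}(\+A)$.
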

%

%
%%%%%%%
\subsection{Lattices and Boolean algebras of subsets closed under preimage}\label{ss lattices of subsets close by preimage}
%%%%%%%%%%%%%%
We denote by $\+P(X)$ the class of subsets of $X$.
\begin{definition}
1. A {\em  bounded  lattice}  (resp. {\em  complete lattice}) $\Latt$ 
{\em of subsets} of a set $E$ 
is a family of subsets of $E$, containing $E$ and $\emptyset$, and
such that $L\cap M$ and $L\cup M$ are in $\Latt$ 
whenever $L,M\in \Latt$
(resp. such that any union or intersection of subsets in $\Latt$ is in $\Latt$). 
We will say {\em lattice} instead of  {\em  bounded  lattice} to shorten names.

2. A {\em Boolean algebra}  (resp. {\em  complete Boolean algebra}) 
$\Bool$ {\em of subsets} of a set $E$ 
is a lattice  (resp. complete lattice) closed under complementation.

3. For  $f:E\to E$,
a lattice $\Latt$ (resp. Boolean algebra $\Bool$) of subsets of $E$ is {\em closed under preimage} by $f$ if
$f^{-1}(L)\in\Latt$ whenever $L\in\Latt$ (resp. $f^{-1}(L)\in\Bool$ whenever $L\in\Bool$).

4. For $\+A=\langle A;\Xi\rangle$  an algebra, and $L\subseteq A$, 
 let 
\begin{equation}\label{eq Latt Bool}
\Latt_{\+A}(L),\quad %!=\!\Latt_{\freez^{*}(\Xi)}(L),  
\Bool_{\+A}(L),\quad %!=\!\Bool_{\freez^{*}(\Xi)}(L), 
\Latt_{\+A}^\infty(L),\quad % \!=\!\Latt_{\freez^{*}(\Xi)}^\infty(L),   
\Bool_{\+A}^\infty(L),\quad %\!=\!\Bool_{\freez^{*}(\Xi)}^\infty(L)
\end{equation}
be respectively
the smallest lattice, Boolean algebra, complete lattice and complete Boolean algebra
of subsets of $A$ which
contain the particular subset $L$ of $A$
and are closed under preimage 
by $*$-freezifications of operations of $\Xi$,
i.e., by the maps $X\mapsto\gamma^{-1}(X)$ for $\gamma\in \freez^{*}(\Xi)$.  Such maps  $X\mapsto\gamma^{-1}(X)$ are called {\em cancellations} in \cite{almeida}, Section 3.1, p.55.
See Figure~\ref{gamma-1}.
\end{definition}

\begin{figure}[h]
\center
\fbox{$\begin{array}{crrcll}
\langle\N;\suc\rangle, \langle\N;+\rangle
&X\mapsto &X-n&=&\{y \mid n+y\in X\}&\textit{with }n\in\N
\\
\langle\N;\times\rangle&X\mapsto &X/n&=&\{y \mid ny\in X\}&\textit{with }n\in\N
\\
\langle\N;+,\times\rangle&X\mapsto &(X-m)/n&=&\{y \mid ny+m\in X\}&\textit{with }m,n\in\N
\\
\langle\Sigma^*; \cdot \rangle&X\mapsto &u^{-1} X v^{-1}&=& \{x\mid uxv \in X\}&\textit{with }u,v\in\Sigma^*
\end{array}$}
\caption{Examples of maps $X\mapsto\gamma^{-1}(X)$ 
for $\gamma\in\freez^{*(}\Xi)$\label{gamma-1}}
\end{figure}

\begin{lemma}[Disjunctive Normal Form]
\label{l:normal LAL}
1)  Every set in $\Latt_{\+A}^\infty(L)$ \big(resp.\! $\Latt_{\+A}(L)$\big)  is of the form
$\bigcup_{i\in I} \big(\bigcap_{\gamma\in \Gamma_i}\gamma^{-1}(L)\big)$
where the $\Gamma_i$'s are subsets of $\DUO(\+A)$ \big(resp.\! with $I$ and the $\Gamma_i$'s  finite\big).

2) Every set in $\Bool_{\+A}^\infty(L)$ \big(resp.\! $\Bool_{\+A}(L)$\big)  is of the form
$\bigcup_{i\in I} \big(\bigcap_{\gamma\in \Gamma_i}\gamma^{-1}(L_{i,\gamma})\big)$
where $L_{i,\gamma}$ is either $L$ or its complement $A\setminus L$,
and the $\Gamma_i$'s are as above.
\end{lemma}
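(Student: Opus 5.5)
We will show that the family $\+D$ of all sets of the displayed form is exactly the smallest family of the required kind, proving each claim by double inclusion. We treat 1) for $\Latt_{\+A}^\infty(L)$; the other three cases change only finiteness conditions or allow complements.

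\emph{Step 1: $\+D\subseteq\Latt_{\+A}^\infty(L)$.} Since $\Latt_{\+A}^\infty(L)$ contains $L$ and is closed under $\gamma^{-1}$ for $\gamma\in\DUO(\+A)$, it contains every $\gamma^{-1}(L)$. Being a complete lattice, it also contains arbitrary intersections of these sets and arbitrary unions of such intersections. The same argument with finite $I$ and finite $\Gamma_i$ gives the inclusion for $\Latt_{\+A}(L)$.

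\emph{Step 2: $\+D$ has the closure properties, so $\Latt_{\+A}^\infty(L)\subseteq\+D$ by minimality.} We check four things.
\begin{itemize}
\item $L\in\+D$: take $I$ a singleton and $\Gamma=\{\mathrm{id}\}$, using the convention that the identity is in $\DUO(\+A)$.
\item The bounds: $\emptyset$ is the empty union ($I=\emptyset$), and $A$ is the empty intersection ($\Gamma_i=\emptyset$). These are the natural conventions, and we use them in the finite case as well.
\item Unions are immediate, by concatenating index sets. For intersections we distribute:
\[
\bigcap_{j\in J}\bigcup_{i\in I_j}\bigcap_{\gamma\in\Gamma_{j,i}}\gamma^{-1}(L)=\bigcup_{\sigma\in\prod_j I_j}\ \bigcap_{j\in J}\ \bigcap_{\gamma\in\Gamma_{j,\sigma(j)}}\gamma^{-1}(L).
\]
This uses choice functions $\sigma$. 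When $J$ and all $I_j$ are finite, the new index set is finite, so the finite case is preserved.
\item Preimages: $\delta^{-1}$ commutes with arbitrary unions and intersections, and $\delta^{-1}(\gamma^{-1}(L))=(\gamma\circ\delta)^{-1}(L)$. Since $\DUO(\+A)$ is closed under composition, $\gamma\circ\delta\in\DUO(\+A)$, so $\delta^{-1}$ maps $\+D$ into itself and preserves finiteness.
\end{itemize}

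For part 2) we use the same family but allow each $L_{i,\gamma}$ to be $L$ or $A\setminus L$. Step 1 additionally uses closure of $\Bool_{\+A}^\infty(L)$ under complement, so that $A\setminus L$ lies in it. In Step 2 we note that $\gamma^{-1}(A\setminus L)=A\setminus\gamma^{-1}(L)$, so closure under preimage is unchanged. The new point is closure under complementation. By De Morgan, the complement of $\bigcup_i\bigcap_{\gamma\in\Gamma_i}\gamma^{-1}(L_{i,\gamma})$ is $\bigcap_i\bigcup_{\gamma\in\Gamma_i}\gamma^{-1}(\overline{L_{i,\gamma}})$, where $\overline{L}=A\setminus L$ and $\overline{A\setminus L}=L$. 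Each singleton $\gamma^{-1}(\overline{L_{i,\gamma}})$ is itself of the normal form, so the family is closed under unions. Closure under intersections follows by the distribution argument above, with choice functions $\sigma\in\prod_i\Gamma_i$. Finiteness is again preserved when $I$ and all $\Gamma_i$ are finite.

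The only mildly delicate point is this complementation/distribution step. In the infinite case it needs the axiom of choice to index by $\prod_j I_j$. We must also keep track of the empty-index conventions, so that $\emptyset$ and $A$ are covered in the finite case.
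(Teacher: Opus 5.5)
Your proof is correct and takes the same approach as the paper's. The paper only cites distributivity of $\cap$ and $\cup$ (with a footnote that choice is needed) and the commutation of $\gamma^{-1}$ with unions and intersections; you use the same ingredients, arranged as an explicit double inclusion in which the family of normal forms contains $L$, $\emptyset$, $A$ and is closed under the required operations (including the closure of $\DUO(\+A)$ under composition and the empty-index conventions).

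One point of bookkeeping is left implicit in both proofs. In part 2), after distributing or composing with $\delta$, a conjunct may involve the same $\gamma$ with both $L$ and $A\setminus L$. Such a conjunct is empty and can be discarded, which recovers the form with a single $L_{i,\gamma}$ for each $\gamma\in\Gamma_i$.
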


\begin{proof}
1) As $\cap$ and $\cup$ distribute over each other \footnote{Complete distributivity of intersection over union needs the axiom of choice
see  \url{https://math.stackexchange.com/questions/1092831/general-distributive-law-and-axiom-of-choice}}, 
 and $\gamma^{-1}(\bigcup_{i\in I} L_i)= \bigcup_{i\in I} \gamma^{-1}(L_i)$, and similarly for $\cap$, every arbitrary (resp.\! finite) $\cap,\cup$ combination of the $\gamma^{-1}(L)$'s, 
$\gamma\in\DUO(\+A)$, can be put in a disjunctive normal form of the mentioned type. A similar argument proves 2).
\end{proof}

\begin{lemma}\label{bool:saturated}
Let $\+A$ be an algebra and $L$ be a subset of $\+A$.

1) Let $\sim$ be an $\+A$-congruence.
If $L$ is $\sim$-saturated then so is every set in  $\Bool_{\+A}^\infty(L)$.

2) Let $\preceq$ be an $\+A$-stable preorder. 
If $L$ is a $\preceq$-initial segment then so is every set in  $\Latt_{\+A}^\infty(L)$.
\end{lemma}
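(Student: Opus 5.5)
The plan is to argue by closure: show that the family of $\sim$-saturated subsets (resp. of $\preceq$-initial segments) is itself a complete Boolean algebra (resp. complete lattice) that contains $L$ and is closed under preimage by every $\gamma\in\freez^{*}(\+A)$. Since $\Bool_{\+A}^\infty(L)$ (resp. $\Latt_{\+A}^\infty(L)$) is the smallest such family, it is then included in it. Alternatively, one could invoke the Disjunctive Normal Form Lemma~\ref{l:normal LAL} and check each building block. The closure argument is cleaner, so I would use it.

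For 2), let $\+I$ be the family of $\preceq$-initial segments. It contains $\emptyset$ and $A$, and it is closed under arbitrary unions and intersections: if $b$ lies in $\bigcup_i X_i$ (resp. $\bigcap_i X_i$) and $x\preceq b$, then $x$ lies in the corresponding $X_i$ (resp. in every $X_i$). The key step is closure under $\gamma^{-1}$. First, every $\gamma\in\freez^{*}(\+A)$ is compatible with $\preceq$. Indeed, since $\preceq$ is stable it is compatible with each $\xi\in\Xi$, hence, by the easy direction of Lemma~\ref{lem:de n a 1}, with each 1-freezification of $\xi$. Compatibility with unary maps is preserved under composition, and the identity is trivially compatible. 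Now take $X\in\+I$, $b\in\gamma^{-1}(X)$ and $x\preceq b$. Then $\gamma(x)\preceq\gamma(b)\in X$, so $\gamma(x)\in X$, that is, $x\in\gamma^{-1}(X)$. Since $L\in\+I$ by hypothesis, minimality gives $\Latt_{\+A}^\infty(L)\subseteq\+I$.

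For 1), the argument is the same with the family $\+S$ of $\sim$-saturated sets. A set $X$ is $\sim$-saturated iff $x\sim b$ and $b\in X$ imply $x\in X$; since $\sim$ is symmetric, this is exactly the statement that $X$ is an initial segment for the stable preorder $\sim$. Part 2) applied to $\sim$ therefore already gives closure of $\+S$ under arbitrary unions, intersections and all $\gamma^{-1}$. It remains to check complements: if $X$ is a union of $\sim$-classes, so is $A\setminus X$. Hence $\+S$ is a complete Boolean algebra containing $L$ and closed under the cancellations, and so $\Bool_{\+A}^\infty(L)\subseteq\+S$.

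The only step needing any care is the compatibility of composite maps $\gamma\in\freez^{*}(\+A)$ with $\preceq$. This rests on Lemma~\ref{lem:de n a 1} together with a trivial induction on the length of the composition; everything else is routine set manipulation.
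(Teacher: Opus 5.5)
Your proof is correct and follows the paper's argument. Both show that the $\sim$-saturated sets (resp.\ the $\preceq$-initial segments) form a complete Boolean algebra (resp.\ complete lattice) that contains $L$ and is closed under $\gamma^{-1}$ for every $\gamma\in\freez^{*}(\+A)$, and both conclude by minimality; the differences are cosmetic: you derive 1) from 2) by viewing $\sim$ as a stable preorder (the paper goes the other way), and you justify explicitly, via Lemma~\ref{lem:de n a 1} and induction on composition, that each $\gamma$ is compatible with $\preceq$, which the paper only asserts.
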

\begin{proof}
 1) The family of $\sim$-saturated subsets of $A$
 is closed
 under arbitrary unions, under complementation
 and under preimage by congruence preserving maps 
 from $A$ into $A$, in particular, maps in $\freez^{*}(\+A)$, 
 hence it contains $\Bool_{\+A}^\infty(L)$.

To prove 2), substitute ``$\preceq$-initial segments''  for
 ``$\sim$-saturated subsets'' in the proof of 1).
 As every set in $\Latt_{\+A}^\infty(L)$ \big(resp. $\Latt_{\+A}(L)$\big)  is of the form
$\bigcup_{i\in I} \big(\bigcap_{\gamma\in \Gamma_i}\gamma^{-1}(L)\big)$, it suffices to prove that intersections, unions and preimages by $\gamma\in\DUO(\+A)$ of initial segments are again initial segments. 
Intersections:  let $b\in \bigcap_{i\in I} L_i$ and $x\leq b$, as for all $i$, $L_i$ is an initial segment and $b\in  L_i$, we have: for all $i$, $x\in  L_i$ hence $x\in\bigcap_{i\in I} L_i$.
Same for unions.
Preimages: let $b\in \gamma^{-1}(L)$ and $x\preceq b$, then, as $\preceq$ is a stable preorder and $\gamma$ is defined using operations of the algebra, $\gamma(x)\preceq \gamma(b)$; $ \gamma(b)\in L$ which is an initial segment, hence also $ \gamma(x)\in L$ and $x\in \gamma^{-1}(L)$.
\end{proof}

\begin{lemma}\label{l;recFinite}
If $L$ is $\+A$-recognizable
then the four families in \eqref{eq Latt Bool} are finite 
hence
\begin{equation*}\label{eq Latt Bool when L rec}
\Latt_\+A(L) = \Latt_\+A^\infty(L),
\qquad
\Bool_\+A(L)  = \Bool_\+A^\infty(L).
\end{equation*}
\end{lemma}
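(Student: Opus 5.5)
The plan is to bound everything by the finite set of $\sim_L$-saturated subsets of $A$. By Lemma~\ref{l:rec and congru}, since $L$ is $\+A$-recognizable, the syntactic congruence $\sim_L$ is an $\+A$-congruence of finite index, say with $k$ classes. The sets saturated with respect to $\sim_L$ are exactly the unions of $\sim_L$-classes, so there are at most $2^k$ of them. Trivially $L$ itself is $\sim_L$-saturated: if $x\in L$ and $x\sim_L y$, take $\gamma=\mathrm{id}\in\freez^{*}(\+A)$ to get $y\in L$.

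Next, Lemma~\ref{bool:saturated}~1), applied with $\sim\,=\,\sim_L$, says that every set in $\Bool_{\+A}^\infty(L)$ is $\sim_L$-saturated. Hence $\Bool_{\+A}^\infty(L)$ has at most $2^k$ elements. The other three families sit inside it. A complete Boolean algebra is in particular a Boolean algebra, a complete lattice and a lattice, and each of these is closed under preimages by $\freez^{*}(\Xi)$ and contains $L$. So by minimality $\Latt_{\+A}(L)$, $\Bool_{\+A}(L)$ and $\Latt_{\+A}^\infty(L)$ are all contained in $\Bool_{\+A}^\infty(L)$, and all four families are finite.

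For the equalities, observe that a finite lattice of subsets is automatically complete. Any union or intersection of members of a finite family is a finite union or intersection, the empty cases giving $\emptyset$ and $A$, which belong to any bounded lattice. Thus $\Latt_{\+A}(L)$, being finite, is a complete lattice containing $L$ and closed under the preimages $\gamma^{-1}$. Minimality then gives $\Latt_{\+A}^\infty(L)\subseteq\Latt_{\+A}(L)$, and the reverse inclusion is immediate since a complete lattice is a lattice. The same argument applied to $\Bool_{\+A}(L)$, which is finite as a subfamily of $\Bool_{\+A}^\infty(L)$, yields $\Bool_{\+A}(L)=\Bool_{\+A}^\infty(L)$.

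There is no real obstacle here. The only points to check are that the identity lies in $\freez^{*}(\+A)$, so that $L$ is $\sim_L$-saturated, and that finiteness is established before it is used to obtain completeness. Alternatively, one can use the disjunctive normal form of Lemma~\ref{l:normal LAL}: there are only finitely many distinct sets $\gamma^{-1}(L)$, since $x\sim_L y$ implies $x\in\gamma^{-1}(L)\Leftrightarrow y\in\gamma^{-1}(L)$. Hence the arbitrary unions and intersections in that normal form reduce to finite ones.
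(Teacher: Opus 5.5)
Your proof is correct and is essentially the paper's argument. You get finite index of $\sim_L$ from Lemma~\ref{l:rec and congru} and $\sim_L$-saturation from Lemma~\ref{bool:saturated}~1), which bounds everything by at most $2^k$ sets; finiteness then gives completeness, and minimality gives the equalities. The only cosmetic difference is that you bound all of $\Bool_{\+A}^\infty(L)$ directly by the saturated sets, whereas the paper counts the sets $\gamma^{-1}(L)$ and $A\setminus\gamma^{-1}(L)$ (your alternative remark) to deduce that $\Bool_{\+A}(L)$ is finite.
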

\if 35
\begin{proof}
If $L$ is recognizable, then $\sim_L$ has a finite  index $k$, there are $k$ congruence classes 
and, as each $\gamma^{-1} (L)$  and each $A\setminus \gamma^{-1} (L)$, 
for $\gamma\in \DUO(\+A)$, 
is a union of congruence classes of  $\sim_L$  (cf.\! Lemma~\ref{bool:saturated} 1)\,),  
there are at most $2^{k}$  sets $\gamma^{-1} (L)$ 
and $A\setminus \gamma^{-1} (L)$.
Thus, the Boolean algebra $\Bool_{\+A}(L)$ is finite 
hence it is complete and equal to $\Bool_{\+A}^\infty(L)$.
A fortiori, the lattice $\Latt_{\+A}(L)$ is finite 
hence it is complete and equal to $\Latt_{\+A}^\infty(L)$.
\end{proof}
\fi
%
%\begin{remark}\label{p:lattice to BA}{\pat on ne s'en sert pas : Supprimer ?}
%If a lattice $\Latt$ of subsets of $E$ is closed under $f^{-1}$ 
%then so is the Boolean algebra $\Bool$ of subsets of $E$ generated by $\Latt$.
%As a consequence, in subsequent sections, 
%every result of the form ``$\Latt_{\+A}(L)$ (resp.\! $\Latt_{\+A}^\infty(L)$\;) is closed under $f^{-1}$''
%implies its twin statement   ``$\Bool_{\+A}(L)$ (resp.\! $\Bool_{\+A}^\infty(L)$\;) is closed under $f^{-1}$''.
%\end{remark}

\if 45
%%%%%%%%%%%%%%%
\subsection{Generated sets}\label{ss generated sets}
%%%%%%%%%%%%%%%%
A convenient generalization of  condition  $f(a)\geq a$ in (1') of Theorem~\ref{thm:ipl1} to arbitrary algebras, consists in assuming that $f$ is such that, for each $a\in A$, $f(a)$ is in the set $\gen(a)$ generated by $\{a\}$ using all functions in $  \DUO(\+A)$ (cf.\! Equation \ref{def:gen}).
\begin{definition} \label{def:gena} [Generated set] 
For $\+A=\langle A;\Xi\rangle$ an algebra and $a\in A$, let  $\gen(a)$ be the subset of $A$ defined by $\gen(a)=\{\gamma(a)\mid \gamma\in  \DUO(\+A) \}$. 
%The  set $\gen(a)$ is also called the {\em ideal} generated by $a$.
\end{definition}
\begin{example} -- For $\+A=\langle \N;\suc\rangle$ and $\+A'=\langle \N;+\rangle$, we have $\gen(a)=\{b\mid b\geq a\}=a+\N$. Hence  $f(a)\in gen(a)$ is equivalent to $f(a)\geq a$.
\\
--  If $\+A''=\langle \N;\times\rangle$,   we have $\gen(a)=\{b\mid a \hbox{ divides } b\}=a\N$.  Thus  $f(a)\in gen(a)$ is equivalent to $a$ divides $f(a)$.
\\
-- If $\+S=\langle\Sigma^*; \cdot \rangle$,  then $\gen(a)=\{u a v\mid u,v \in\Sigma^*\}$, and $f(a)\in gen(a)$ is equivalent to $a$ is a factor of $f(a)$.
\end{example}
\begin{remark}
 The failure of the extension of Theorem~\ref{thm:ipl1} to some simple algebras can be related  to the failure of the hypothesis $f(a)\in \gen(a)$ for every $a\in A$. 
Consider the algebra $\+A=\langle \{a,b\};Id\rangle$  and $f$ such that $f(a)=b$ and $f(b)=a$.  On the one hand, the sole congruences on $\+A$  are the two trivial ones and $f$ is trivially congruence preserving, even though $f$ fails the  condition $f(x)\in\gen(x)$ as $f(a)=b\notin\gen(a)=\{a\}$.
 On the other hand, letting $L=\{a\}$, the set $f^{-1}(L)=\{b\}$ is not in the lattice  $\Latt_{\+A}(L)=\Latt_{\+A}^\infty(L)= \{\{a\}\}$.
 {\pat NON CP correspond a Boolean algebra et  $f^{-1}(L)=\{b\}\in
 \Bool_{\+A}(L)$. }
  \end{remark}
\fi%

%%%%%%%%%%%%%%%%%%%%%%%%%%%%
%%%%%%%%%%%%%%%%%%%%%%%%%%%%
%%%%%%%%%%%%%%%%%%%%%%%%%%%%
\section{ Preservation of stable preorders or congruences}\label{s:genResFin}%Case of algebras and residually finite algebras}\label{s:genResFin}
%%%%%%%%%%%%%%%%%%%%%%%%%%%%%
%%%%%%%%%%%%%%
%%%%%%%%%%%%%%%%%%%%%%%%%%%%
%%%%%%%%%%%%%%%%%%%%%%%%%%%%

We first prove a variant of Theorem~\ref{thm:ipl1}  for general algebras, where  congruence preservation is replaced by the  stronger condition of 
stable preorder preservation.
We then  extend  Theorem~\ref{thm:ipl1}   to the free monoid
and  to residually finite algebras (in a strong sense) admitting a group operation.  

%%%%%%%%%%%%%%%%%%
\subsection{Stable preorder preservation and complete lattices}\label{ss:spp}
%%%%%%%%%%%%%%%%%%%
We prove Theorem \ref{th:LatticeGen}, a variant of Theorem~\ref{thm:ipl1}, where
stable preorders, complete lattices and arbitrary subsets replace congruences, lattices
and recognizable subsets respectively.

\begin{lemma} \label{f^-1-dansTreillis} Let $\+A=\langle A;\Xi\rangle$.  
Given a subset $L\subseteq A$, if $f : A\to A$ preserves the syntactic preorder $\leq_L$
then $f^{-1}(L) \in \Latt_{\+A}^\infty(L)$ and we have
\begin{equation}\label{eq:f-1L}
f^{-1}(L) \ =\ \mathop{\bigcup}\limits_{a\in f^{-1}(L)}\left(\mathop{\bigcap}\limits_{\{\gamma\in  {\freez^*}(\+A)\mid \gamma(a)\in L\}}\gamma^{-1}(L)\right).
\end{equation}
\end{lemma}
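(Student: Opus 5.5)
The plan is to prove the set equality \eqref{eq:f-1L} by double inclusion. Membership in $\Latt_{\+A}^\infty(L)$ then follows at once: the right-hand side is an arbitrary union of arbitrary intersections of sets $\gamma^{-1}(L)$ with $\gamma\in\DUO(\+A)$, and $\Latt_{\+A}^\infty(L)$ contains all such sets. It contains each $\gamma^{-1}(L)$ by closure under preimages, and it is closed under arbitrary unions and intersections. This is exactly the disjunctive normal form of Lemma~\ref{l:normal LAL}. One degenerate case needs a remark: if some index set $\{\gamma\mid\gamma(a)\in L\}$ were empty, the intersection would be read as $A$, which lies in the complete lattice. In fact this cannot happen when $a\in f^{-1}(L)$ and the identity yields $a\in L$; I return to this below.

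For the inclusion $\subseteq$, I take $a\in f^{-1}(L)$ and show that $a$ lies in the inner intersection indexed by $a$ itself. This is immediate: for every $\gamma$ with $\gamma(a)\in L$ we have $a\in\gamma^{-1}(L)$. No hypothesis on $f$ is used here.

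The inclusion $\supseteq$ is where preservation of $\leq_L$ enters. Let $x$ belong to the intersection indexed by some $a\in f^{-1}(L)$, so that for all $\gamma\in\DUO(\+A)$, $\gamma(a)\in L$ implies $\gamma(x)\in L$. By Definition~\eqref{eq:sord} this says precisely that $a\leq_L x$. Since $f$ preserves $\leq_L$, we get $f(a)\leq_L f(x)$. Applying \eqref{eq:sord} with $\gamma=\mathrm{id}$, which lies in $\DUO(\+A)$ by convention, and using $f(a)\in L$, we obtain $f(x)\in L$, that is, $x\in f^{-1}(L)$.

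The only delicate points are bookkeeping. First, the correct orientation must be extracted from the intersection condition, namely $a\leq_L x$ and not $x\leq_L a$. Second, the identity must belong to $\DUO(\+A)$. That convention is what makes the last step work, and it also shows that the intersection indexed by $a$ is over a nonempty family only when $a\in L$. For $a\notin L$ the family may be empty, and the intersection is then taken to be $A$, which is harmless since $A\in\Latt_{\+A}^\infty(L)$ and the argument above still applies verbatim.
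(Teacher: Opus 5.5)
Your proof is correct and follows the paper's argument. The paper likewise identifies the inner intersection with $\{c\mid c\geq_L a\}$ and uses $\gamma=\mathrm{id}$ to conclude; it treats $f^{-1}(L)=\emptyset$ and the empty-family case separately, which your uniform, vacuously-true reading of $a\leq_L x$ absorbs. One small slip that nothing depends on: $a\in f^{-1}(L)$ does not give $a\in L$, and the family $\{\gamma\mid\gamma(a)\in L\}$ is nonempty \emph{whenever} $a\in L$, not \emph{only when} $a\in L$.
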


\begin{proof}  For each $a\in f^{-1}(L)$ let
 $I_a=\bigcap_{\{\gamma\in  {\freez^*}(\+A)\mid \gamma(a)\in L\}}\gamma^{-1}(L)$. The proof of \eqref{eq:f-1L} splits into three cases.
 
 If $f^{-1}(L)=\emptyset$,  the right end side of \eqref{eq:f-1L} is the union of an empty family of sets, hence it is the empty set,  and  \eqref{eq:f-1L}
  holds.
 
 If there exists an $a_0\in f^{-1}(L)$ such that 
 for all $\gamma$ in  $\freez^*(\+A)$, $\gamma(a_0)\not\in L$, then,   as the intersection of an empty family of sets is the maximum element $A$, we have $I_{a_0}=A$ and \eqref{eq:f-1L} becomes $f^{-1}(L)=A$. In that case,
 for every $c\in A$ and every $\gamma \in \DUO(\+A)$, we have
$\gamma(a_0) \in L \Rightarrow \gamma(c) \in L$, since the
premise is false. Hence $c \geq_L a_0$. As $f$ preserves
$\leq_L$, we also have $f(c) \geq_L f(a_0)$, thus for every
$\gamma \in \DUO(\+A)$, we have
$\gamma(f(a_0)) \in L \Rightarrow \gamma(f(c)) \in L$. In
particular for $\gamma = id$, as $a_0\in f^{-1}(L)$, 
we obtain $f(a_0)\in L$ hence $f(c) \in L$. Thus $f^{-1}(L) = A$ and \eqref{eq:f-1L} holds.

 Otherwise, 
 $f^{-1}(L) \not= \emptyset$ and
there  exists no $a_0 \in f^{-1}(L)$ such that:
$\forall \gamma \in \DUO(\+A)$,  $\gamma(a_0)$ is not in $ L$.
Then $c\in I_a$ if and only if for all $\gamma\in\DUO(\+A)$, we have
$\gamma(a)\in L\Rightarrow\gamma(c)\in L$.
Thus, by definition of the syntactic preorder $\leq_L$, we have
$I_a=\{c\mid c\geq_L a\}$.
Since $a\leq_L a$ we have $a\in I_a$ hence $f^{-1}(L)\subseteq\bigcup_{a\in f^{-1}(L)} I_a$.
Conversely, for every $c\in I_a$ we have $c\geq_L a$ and, as $f$ preserves $\leq_L$,  we also have
$f(c)\geq_L f(a)$ hence, as by  equation \eqref{eq:f-1L} $f(a)\in L$, then also $f(c)\in L$.
Thus, $\bigcup_{a\in f^{-1}(L)} I_a \subseteq f^{-1}(L)$.
This proves \eqref{eq:f-1L} which implies that $f^{-1}(L) \in \Latt_{\+A}^\infty(L)$.
\end{proof}
\begin{remark}
All our lattices must be bounded to take care of empty unions and intersections.
\end{remark}

\begin{figure}[h]
\center
\fbox{$\begin{array}{lrcll}
\langle\N;+\rangle%\langle\N;+\rangle
&f^{-1}(L)&\!\!\!=\!\!\!&\bigcup_{a\in f^{-1}(L)}\left({\bigcap}_{\{n\in L-a\}}L-n\right)&\text{with\ } L-a=\{x\mid x+a\in L\}
\\
\langle\N;\times\rangle& f^{-1}(L) &\!\!\!=\!\!\!&{\bigcup}_{a\in f^{-1}(L)}\left({\bigcap}_{\{n \in L/a\}}L/n \right) & \text{with } L/n=\{x\mid nx\in L\}
\\
\langle\Sigma^*; \cdot \rangle&f^{-1}(L)  &\!\!\!=\!\!\!&{\bigcup}_{a\in f^{-1}(L)}\left({\bigcap}_{\{(x,y) \mid xay \in L\}} x^{-1}Ly^{-1} \right) &\text{with\ } x^{-1}Ly^{-1}=\{z\mid xzy\in L\}
\end{array}$}
\caption{Examples of the formula in equation \eqref{eq:f-1L} }
\end{figure}

\if32
\begin{remark} For $\N$ with operations $\suc$, +  or $\times$,  equation \eqref{eq:f-1L} can be simplified due to the associativity and commutativity of the operations.
\\
-- For  $\langle \N;\suc\rangle$ or $\langle \N;+\rangle$,  equation \eqref{eq:f-1L} reduces to: \\$f^{-1}(L) \ =\ \bigcup_{a\in f^{-1}(L)}\left({\bigcap}_{\{n\in L-a\}}L-n\right) \text{,\ where\ } L-a=\{x\mid x+a\in L\}.$ 
\\-- For $\langle \N;\times\rangle$,  equation \eqref{eq:f-1L} becomes: \\$f^{-1}(L) \ =\ {\bigcup}_{a\in f^{-1}(L)}\left({\bigcap}_{\{n \in L/a\}}L/n \right), \text{\ where\ } L/n=\{x\mid nx\in L\}.$
\\-- For the free monoid $\langle \Sigma^*;\cdot\rangle$, equation \eqref{eq:f-1L} becomes:
\\$f^{-1}(L) \ =\ {\bigcup}_{a\in f^{-1}(L)}\left({\bigcap}_{\{(x,y) \mid xay \in L\}} x^{-1}Ly^{-1} \right), \text{\ where\ } x^{-1}Ly^{-1}=\{z\mid xzy\in L\}.$
\end{remark}
\fi%

\begin{theorem} \label{th:LatticeGen} 
[Stable preorder preservation and lattices]
Let $\+A=\langle A;\Xi\rangle$ be  an algebra  and $f\colon A\to A$ be a mapping. %and let $\Latt_{\+A}^\infty(L)$  be as in Definition \ref{not:freeze}. 
\\
1) Conditions $(i)$ and $(ii)$ below are equivalent
% and when they are satisfied, $f(a)\in \gen(a)$ for every $a\in A$:
%\begin{itemize}
%\item[(i)] 

$ (i)\,(P)$: $f$ is stable preorder preserving,

 $(ii)\,(\widetilde{P})$: %\item[(ii)]  
for every  subset  $L\subseteq A$, $f^{-1}(L)$ is in the complete lattice $\Latt_{\+A}^\infty(L)$.
%\end{itemize}

\noindent
2) Conditions $(i')$ and $(ii')$ are equivalent:
\\%\begin{itemize}
%\item[(i')] 
\indent
$(i')\,(P_{\textit{fi}})$:  $f$ preserves the stable preorders which have finite index,

$(ii')\,(\widetilde{P_{\textit{fi}}})$:
for every $\+A$-recognizable subset $L\subseteq A$, $f^{-1}(L)$ is in
the bounded lattice $\Latt_{\+A}(L)$.
%\end{itemize}
\end{theorem}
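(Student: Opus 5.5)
For part 1), the direction $(i)\Rightarrow(ii)$ follows directly from Lemma~\ref{f^-1-dansTreillis}. The syntactic preorder $\leq_L$ is an $\+A$-stable preorder, so a stable preorder preserving $f$ preserves it in particular. That lemma then gives $f^{-1}(L)\in\Latt_{\+A}^\infty(L)$ through formula \eqref{eq:f-1L}.

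For $(ii)\Rightarrow(i)$, let $\preceq$ be any $\+A$-stable preorder and let $x\preceq y$. We must show $f(x)\preceq f(y)$. Take $L=\{z\mid z\preceq f(y)\}$, the principal initial segment of $f(y)$. It is a $\preceq$-initial segment by transitivity. By Lemma~\ref{bool:saturated} 2), every set in $\Latt_{\+A}^\infty(L)$ is a $\preceq$-initial segment, and by $(ii)$ this applies to $f^{-1}(L)$. Since $y\in f^{-1}(L)$ by reflexivity and $x\preceq y$, we get $x\in f^{-1}(L)$, that is, $f(x)\preceq f(y)$. This settles part 1).

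For part 2), the direction $(i')\Rightarrow(ii')$ again uses Lemma~\ref{f^-1-dansTreillis}, applied to $\leq_L$. We need $\leq_L$ to have finite index. The largest congruence contained in $\leq_L$ is $\{(x,y)\mid x\leq_L y\wedge y\leq_L x\}$, which is exactly $\sim_L$. By Lemma~\ref{l:rec and congru}, $\sim_L$ has finite index since $L$ is recognizable. So $f$ preserves $\leq_L$, and $f^{-1}(L)\in\Latt_{\+A}^\infty(L)$. Lemma~\ref{l;recFinite} gives $\Latt_{\+A}^\infty(L)=\Latt_{\+A}(L)$, and we are done.

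For $(ii')\Rightarrow(i')$, let $\preceq$ be a stable preorder of finite index with associated congruence $\equiv$ (mutual $\preceq$), and let $x\preceq y$. Take the same $L=\{z\mid z\preceq f(y)\}$. The one new point is that $L$ must be recognizable. If $z\equiv z'$ then $z\preceq f(y)$ iff $z'\preceq f(y)$, so $L$ is saturated for the finite index congruence $\equiv$. Lemma~\ref{l:rec and congru} then shows $L$ is $\+A$-recognizable. Now $(ii')$ gives $f^{-1}(L)\in\Latt_{\+A}(L)\subseteq\Latt_{\+A}^\infty(L)$, and the argument of part 1) yields $f(x)\preceq f(y)$.

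The main obstacle is the construction of this test set $L$ in the converse directions. Once the principal initial segment is chosen, Lemma~\ref{bool:saturated} does the work; the remaining checks are its recognizability in part 2) and the identification of the largest congruence inside $\leq_L$ with $\sim_L$.
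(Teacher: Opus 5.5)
Your proof is correct and follows the paper's argument essentially step for step. The forward directions use Lemma~\ref{f^-1-dansTreillis}, together with Lemma~\ref{l;recFinite} in the finite-index case, and the converses use the principal initial segment $L=\{z\mid z\preceq f(y)\}$ with Lemma~\ref{bool:saturated}~2). Your explicit checks that the congruence associated with $\leq_L$ is exactly $\sim_L$, and that $L$ is saturated for the congruence associated with $\preceq$, spell out steps the paper leaves implicit.
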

\begin{proof}  
1)
$(i)\Longrightarrow (ii)$ :  follows from Lemma \ref{f^-1-dansTreillis}.%as each $\gamma\in  {\textit{DUO}}(\+A)$ is a composition of $\xi^{\vec c}_i$'s, $\gamma^{-1}$ is a composition of $(\xi^{\vec c}_i)^{-1}$'s and $(ii)$ follows from \eqref{eq:f-1L}.

$(ii)\Longrightarrow (i)$: 
 Assume that $f^{-1}(L)$ is in $\Latt_{\+A}^\infty(L)$ for all $L$.
Let $\preceq$ be an $\+A$-stable preorder and $x\in A$: let 
$L=\{y \mid y \preceq f(x)\}$, $L$ is a $\preceq$-initial segment (by transitivity of  $\preceq$).
Condition $(\widetilde{P})$ ensures that $f^{-1}(L)$ is in $\Latt_{\+A}^\infty(L)$.
As $L$ is a $\preceq$-initial segment so is also every set in $\Latt_{\+A}^\infty(L)$
(by Lemma~\ref{bool:saturated}). In particular,  $f^{-1}(L)$ is a $\preceq$-initial segment.
By reflexivity of $\preceq$, $f(x)\in L$ hence  $x\in f^{-1}(L)$; as $f^{-1}(L)$ is a $\preceq$-initial segment,  we deduce that if $y\preceq x$ then $y\in f^{-1}(L)$
hence $f(y)\in L$ and therefore $f(y)\preceq f(x)$.
This shows that $f$ preserves the stable preorder $\preceq$.

%In order to see that $f(a)\in \gen(a)$, it suffices to apply $(ii)$ to $L=\{f(a)\}$ : applying Lemma~\ref{f^-1-dansTreillis}, we have $a\in f^{-1}(L)=  \cup_{i\in I} \big(\cap_{\gamma\in \Gamma_i}\gamma^{-1}(L)\big)$ hence, for some $\gamma\in \DUO(\+A)$, we have $a\in \gamma^{-1}(L)=\gamma^{-1}(\{f(a)\})$ and therefore $\gamma(a)=f(a)$.

 2) $(i')\Longrightarrow (ii')$ follows from Lemma \ref{f^-1-dansTreillis}  and Lemma~\ref{l;recFinite}   ($\Latt_{\+A}^\infty(L)=\Latt_{\+A}(L)$ for $L$ recognizable).
 
$(ii')\Longrightarrow (i')$.
Let $\preceq$ be a stable preorder with finite index associated congruence.
Observe that the initial segment $L=\{y \mid y \preceq f(x)\}$ is $\+A$-recognizable.
As $L$ is saturated for the congruence $\sim$ associated to $\preceq$
and, as $\sim$ has finite index, we can apply Lemma~\ref{l:rec and congru}.
Using again Lemma~\ref{l;recFinite},  follow the proof of $(ii)\Longrightarrow (i)$.
\end{proof}
The upper half of Figure \ref{figSerge}    illustrates Theorem \ref{th:LatticeGen},
which  differs in three ways from  Theorem~\ref{thm:ipl1}:
1) the set $L$ is arbitrary,
2) the lattice  $\Latt_{\+A}^\infty(L)$ is complete,
3) the function is stable preorder preserving.
In the next subsections, we will get  results closer to  Theorem~\ref{thm:ipl1}  by assuming that some of the below conditions (1), (2) or (3) hold.
\begin{enumerate}\itemsep0em
 \item The algebra is the free monoid, \S\ref{ss affine complete}.
\item The algebra satisfies a suitably chosen residual finiteness property (Definition~\ref{def:residually finite c sp}) \S\ref{ss residually finite}.
\item The algebra has a Malcev term, \S\ref{ss:when there is a group operation}: then
 finite index congruence preservation is equivalent to finite index stable preorder preservation.
\end{enumerate}
%

%%%%%%%%%%%%%%%%%%
\subsection{Free monoids and  affine complete algebras}\label{ss affine complete}%Affine complete algebras, congruence preservation and lattices}
%%%%%%%%%%%%%%%%%%%

%\begin{theorem} {\pat On ne s'en sert pas - SUPPRIMER???} In algebra $\+A=\langle A;\Xi \rangle$, let $f\colon A\to A$ be defined by a polynomial. and $L\subseteq A$ be recognizable.
%Then  $f^{-1}(L)$ is recognizable.
%\end{theorem}
%\begin{proof} Observe that
%\begin{eqnarray*}
%f^{-1}(L) &=& \{a\in A \mid f(a)\in L\} = \{a\in A \mid t_\+A(a) \in L\} \\
%&=& \{a\in A \mid \varphi(t_\+A(a)) \in X\} = \{a\in A \mid t_{\+M}(\varphi(a)) \in X\}  \ \text{(as $\varphi$ is a morphism)}\\
%&=& \{a\in A \mid \varphi(a) \in t_{\+M}^{-1}(X)\} =\varphi^{-1}(Y) \quad \text{where $Y= t_{\+M}^{-1}(X)$}
%\end{eqnarray*}
%As $Y$ is a finite subset of $M$, $f^{-1}(L)$ is recognizable.
%\end{proof} 
%
\begin{theorem}\label{th:freeMLat} Assume $|\Sigma|\geq 2$, then on $\+A=\langle \Sigma^*; \cdot \rangle$,\\
1) $f\colon \Sigma^*\to \Sigma^*$ is congruence preserving  if and only if  $f$ preserves stable preorders, \\
 2) if $f\colon \Sigma^*\to \Sigma^*$ is congruence preserving then for any recognizable  $L$ (resp. for  any $L\subseteq A$),  $f^{-1}(L)\in \Latt_{\+A}(L)$ (resp.   $f^{-1}(L)\in\Latt_{\+A}^\infty(L)$).
\end{theorem}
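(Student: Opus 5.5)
The plan is to reduce everything to the structure of congruence preserving functions on the free monoid. The implication ``stable preorder preserving $\Rightarrow$ congruence preserving'' holds in every algebra, since every congruence is a stable preorder (this is the Remark after Definition~\ref{def:congCompat 1}). So the only real content of 1) is the converse. For this I would invoke the known characterization of congruence preserving functions on $\langle\Sigma^*;\cdot\rangle$ with $|\Sigma|\geq 2$, from our earlier work on congruence preservation in free monoids. That result says such an $f$ is a polynomial function: either a constant $x\mapsto u_0$, or of the form $x\mapsto u_0xu_1x\cdots xu_n$ with $u_i\in\Sigma^*$. In other words, $\langle \Sigma^*;\cdot\rangle$ is affine complete, which matches the title of this subsection.

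Granting this, I show that every polynomial function preserves every stable preorder $\preceq$. Let $x\preceq y$. A constant function gives $u_0\preceq u_0$ by reflexivity. For $f(x)=u_0xu_1\cdots xu_n$, I would pass from $f(x)$ to $f(y)$ by replacing the occurrences of $x$ by $y$ one at a time, from left to right. Each step has the shape $w\,x\,w'\preceq w\,y\,w'$ for fixed words $w,w'$. This follows from stability of $\preceq$ under the binary operation $\cdot$, applied through its 1-freezifications $z\mapsto wz$ and $z\mapsto zw'$ (Lemma~\ref{lem:de n a 1}); equivalently, the map $z\mapsto wzw'$ lies in $\freez^*(\+A)$ and is therefore compatible with $\preceq$. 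Chaining the $n$ steps by transitivity of $\preceq$ gives $f(x)\preceq f(y)$. This is exactly the argument in the proof of Lemma~\ref{lem:de n a 1}, applied to the $n$-ary term $u_0x_1u_1\cdots x_nu_n$ with all $x_i$ set to $x$ (respectively $y$).

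For 2), let $f$ be congruence preserving. By 1), $f$ preserves all stable preorders, so Theorem~\ref{th:LatticeGen} 1) gives $f^{-1}(L)\in\Latt_{\+A}^\infty(L)$ for every $L\subseteq\Sigma^*$. In particular $f$ preserves the finite index stable preorders, so Theorem~\ref{th:LatticeGen} 2) gives $f^{-1}(L)\in\Latt_{\+A}(L)$ for every recognizable $L$. Alternatively, one can note directly that $\Latt_{\+A}^\infty(L)=\Latt_{\+A}(L)$ in that case by Lemma~\ref{l;recFinite}.

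The main obstacle is the characterization step, i.e. showing that on $\Sigma^*$ with at least two letters every congruence preserving function is a polynomial. If the cited result is only available under a stronger hypothesis, for instance $|\Sigma|\geq 3$, I would redo it for two letters. The approach would be to use the syntactic congruences of well-chosen finite sets and of sets like $\Sigma^*w\Sigma^*$ to pin down $f(x)$ letter by letter. This can be done by comparing $f(x)$ with $f(\varepsilon)$ and exploiting the congruences that identify all words containing a given factor. Once the polynomial form is established, everything else is routine.
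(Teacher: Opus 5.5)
Your proposal is correct and follows essentially the paper's proof. You invoke the result of \cite{acg22} that congruence preserving functions on $\Sigma^*$ with $|\Sigma|\geq 2$ are polynomial, and you show that polynomials preserve every stable preorder. You do this by chaining one-occurrence substitutions through maps $z\mapsto wzw'$ in $\freez^*(\+A)$ together with transitivity, whereas the paper inducts on the degree; this is the same idea. You then get 2) from Lemma~\ref{f^-1-dansTreillis} and Theorem~\ref{th:LatticeGen}, as the paper does. Your fallback paragraph is unnecessary, since the cited characterization already covers $|\Sigma|\geq 2$.
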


\begin{proof} 1) As stable preorder preservation implies congruence preservation (congruences are preorders), we only need to prove the converse.
 When $\Sigma$ has at least two letters, we proved in \cite{acg22} that all congruence preserving functions on $\Sigma^*$ are polynomial. Let $f$ be a polynomial function  of the form  $f\colon x\mapsto w_0xw_1xw_2\cdots w_{n-1} x w_n$; it is said to have degree $n$ if there are $n$ occurrences of $x$. 
By induction on $n$ we prove that every polynomial function is stable preorder preserving. Let $\preceq$ be a stable preorder and $x\preceq y$.
{\em Basis:} If $n=0$, then $f$ is constant, hence $f(x)\preceq f(y)$. If $n=1$,  then $f(x)=w_0 x w_1$, hence $f$ is in $ \DUO(\+A)$, and as $\preceq$ is stable, $f(x)\preceq f(y)$.
{\em Induction:}  Assume that any polynomial $f$ with degree $n-1$  preserves $\preceq$,  and let $g$ be a degree $n$ polynomial defined by
$g(x) = f(x)  x w_{n}$, where $f$ has degree  $n-1$ and let us prove that $g(x)\preceq g(y)$. 
By the induction hypothesis, $f(x)\preceq f(y)$; as $\preceq$ is stable,  it is compatible with the operation $\cdot$ of the algebra, hence $f(x)x\preceq f(y)y$  and $f(x)yw_n\preceq f(y)yw_n$. Whence {\it 1)}.\\
2)  By Lemma \ref{f^-1-dansTreillis}, {\it 1)} implies $f^{-1}(L) \in \Latt_{\+A}^\infty(L)$. Applying 1) and Theorem \ref{th:LatticeGen} {\it 2)}, for any recognizable  $L$,  $f^{-1}(L)\in \Latt_{\+A}(L)$. 
Whence {\it 2)}.
  \end{proof}

 Theorem \ref{th:freeMLat} can be used in the study of varieties of regular languages.
 Recall \cite{KaarliPixley} that an algebra is said to be {\em affine complete} if all congruence preserving functions are polynomial.  
We can   extend Theorem~\ref{th:freeMLat} to all affine complete algebras. The proof is analogous to the one for the free monoid, with heavier notations.

%%%%%%%%%%%%%%%%%%
\subsection{Congruence  preservation and   Boolean algebras of subsets}\label{ss cpbool}
%%%%%%%%%%%%%%%%%%%
Congruence preservation for arbitrary algebras can be characterized using Boolean algebras instead of lattices.  First adapt Lemma~\ref{f^-1-dansTreillis} for congruence preservation.
\begin{lemma} \label{f^-1-dansBA} 
Let $\+A=\langle A;\Xi\rangle$, let $L$ be a subset of $A$ and $f:A\to A$. 
If $f$ preserves the syntactic congruence $\sim_L$ then  $f^{-1}(L) \in \Bool_{\+A}^\infty(L)$.
More precisely, 
%\begin{align}\label{eq:f-1L BA}
%f^{-1}(L)  =
% \mathop{\bigcup}\limits_{a\in f^{-1}(L)}\left\{ \mathop{\bigcap}\limits_{\gamma\in  {\freez^*}(\+A),\gamma(a)\in L}\gamma^{-1}(L)\;\cap\; \mathop{\bigcap}\limits_{\gamma\in  {\freez^*}(\+A),\gamma(a)\not\in L} A\setminus\gamma^{-1}(L)\right\}
% \end{align}
 \begin{align}\label{eq:f-1L BA}
f^{-1}(L)  =
 \mathop{\bigcup}\limits_{a\in f^{-1}(L)}\left\{ \mathop{\bigcap}\limits_{\gamma\in  {\freez^*}(\+A),\gamma(a)\in L}\gamma^{-1}(L)\;\cap\; \mathop{\bigcap}\limits_{\gamma\in  {\freez^*}(\+A),\gamma(a)\not\in L} A\setminus\gamma^{-1}(L)\right\}
 \end{align}
\end{lemma}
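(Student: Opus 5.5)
The plan mirrors the proof of Lemma~\ref{f^-1-dansTreillis}, with the syntactic preorder replaced by the syntactic congruence. For each $a\in f^{-1}(L)$, let $J_a$ be the set in braces in \eqref{eq:f-1L BA}:
\[
J_a=\bigcap_{\gamma\in\DUO(\+A),\,\gamma(a)\in L}\gamma^{-1}(L)\;\cap\;\bigcap_{\gamma\in\DUO(\+A),\,\gamma(a)\notin L}\bigl(A\setminus\gamma^{-1}(L)\bigr).
\]
The first step is to identify $J_a$ with the $\sim_L$-class of $a$. Indeed, $c\in J_a$ if and only if, for every $\gamma\in\DUO(\+A)$, we have $\gamma(c)\in L$ exactly when $\gamma(a)\in L$. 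By \eqref{eq:scong} this says precisely $c\sim_L a$.

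Because of the complements, no separate case analysis is needed here. An empty intersection over both families would simply give $A$, and that is consistent, since $\DUO(\+A)$ always contains the identity. In fact, since $a\in f^{-1}(L)$ does not by itself force $a\in L$, the identity may land in either family, but the characterization of $J_a$ as the class of $a$ holds in all cases.

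Next come the two inclusions. Since $a\sim_L a$, we have $a\in J_a$, so $f^{-1}(L)\subseteq\bigcup_{a\in f^{-1}(L)}J_a$. Conversely, let $c\in J_a$ with $f(a)\in L$. Then $c\sim_L a$, and because $f$ preserves $\sim_L$ we get $f(c)\sim_L f(a)$. Applying the definition with $\gamma=\mathrm{id}$ gives $f(a)\in L\Leftrightarrow f(c)\in L$, hence $f(c)\in L$ and $c\in f^{-1}(L)$. If $f^{-1}(L)=\emptyset$, both sides of \eqref{eq:f-1L BA} are empty.

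Finally, membership in $\Bool_{\+A}^\infty(L)$ follows because every $\gamma^{-1}(L)$ lies there by closure under preimages, and so do their complements. A complete Boolean algebra is closed under arbitrary intersections and unions, so the right-hand side belongs to $\Bool_{\+A}^\infty(L)$.

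I expect no real obstacle. The only delicate point is the bookkeeping that turns the formula into membership of $\sim_L$-classes, in particular ensuring that the identity belongs to $\DUO(\+A)$, which holds by the convention in Definition~\ref{not:freeze}.
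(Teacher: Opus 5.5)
Your proof is correct and follows the paper's argument: you identify the bracketed set with the $\sim_L$-class of $a$, then get the two inclusions from $a\sim_L a$ and from compatibility of $\sim_L$ with $f$. Your explicit use of $\gamma=\mathrm{id}$, your treatment of the empty case, and your closure argument for membership in $\Bool_{\+A}^\infty(L)$ only spell out steps the paper leaves implicit.
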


\begin{proof} 
Letting  $I_a= \mathop{\bigcap}\limits_{\gamma\in  {\freez^*}(\+A),\gamma(a)\in L}\gamma^{-1}(L)\;\cap\; \mathop{\bigcap}\limits_{\gamma\in  {\freez^*}(\+A),\gamma(a)\not\in L} A\setminus\gamma^{-1}(L)$, 
we argue as in the proof of Lemma~\ref{f^-1-dansTreillis}.
Observe that $c\in I_a$ if and only if, for all $\gamma\in\DUO(\+A)$, we have
$\gamma(a)\in L\Leftrightarrow\gamma(c)\in L$.
Thus, by definition of the syntactic congruence $\sim_L$, we have
$I_a=\{c\mid c \sim_L a\}$.
Since $a\sim_L a$ we have $a\in I_a$ hence $f^{-1}(L)\subseteq\bigcup_{a\in f^{-1}(L)} I_a$.
Conversely, for every $c\in I_a$ we have $c\sim_L a$ and, as $f$ preserves $\sim_L$,  we also have
$f(c)\sim_L f(a)$, hence, if $f(a)\in L$ then $f(c)\in L$.
We thus have: $\bigcup_{a\in f^{-1}(L)} I_a \subseteq f^{-1}(L)$.
This proves \eqref{eq:f-1L BA}.
\end{proof}

\begin{theorem}\label{thm:cp bhull}
[Congruence preservation and Boolean algebras]
Let $\+A$ be an algebra
and $f\colon A \rightarrow A$.\\
1) Conditions $(i)$ and $(ii)$ are equivalent:
%and when they are satisfied, $f(a)\in \gen(a)$ for every $a\in A$.
%\begin{itemize}
%\item[(i)] 

$(i)(C)$: $f$ is $\+A$-congruence preserving,
%\item[(ii)]  

$(ii)(\widetilde C)$: for every  subset  $L\subseteq A$, $f^{-1}(L)$ is in the complete Boolean algebra $\Bool_{\+A}^\infty(L)$.
%\end{itemize}
\\
2) Conditions $(i')$ and $(ii')$ are equivalent:
%\begin{itemize}
%\item[(i')] 

$(i')(C_{fi})$: $f$ preserves the $\+A$-congruences which have finite index,
%
% \item[(ii')]  

$(ii')(\widetilde{C_{fi})}$  for every $\+A$-recognizable subset $L\subseteq A$, $f^{-1}(L)$ is in
the finite Boolean algebra $\Bool_{\+A}(L)$.
%\end{itemize}
\end{theorem}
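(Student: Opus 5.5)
The proof follows Theorem~\ref{th:LatticeGen} closely, with congruences in place of stable preorders, Boolean algebras in place of lattices, and Lemma~\ref{f^-1-dansBA} in place of Lemma~\ref{f^-1-dansTreillis}.

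\emph{Part 1).} For $(i)\Rightarrow(ii)$, let $L\subseteq A$. The syntactic relation $\sim_L$ is an $\+A$-congruence, so a congruence preserving $f$ preserves $\sim_L$. Lemma~\ref{f^-1-dansBA} then gives $f^{-1}(L)\in\Bool_{\+A}^\infty(L)$.

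For $(ii)\Rightarrow(i)$, let $\sim$ be an $\+A$-congruence and suppose $x\sim y$. Take $L$ to be the $\sim$-class of $f(x)$, namely $L=\{z\mid z\sim f(x)\}$. This $L$ is $\sim$-saturated, so by Lemma~\ref{bool:saturated}~1) every set in $\Bool_{\+A}^\infty(L)$ is $\sim$-saturated. In particular $f^{-1}(L)$ is $\sim$-saturated. By reflexivity $f(x)\in L$, so $x\in f^{-1}(L)$. Saturation then gives $y\in f^{-1}(L)$, which means $f(y)\sim f(x)$. Hence $f$ preserves $\sim$.

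Compared with the preorder case this direction is easier, because saturation is symmetric. There is no need to pass through initial segments, and closure under complementation is harmless for saturated sets.

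\emph{Part 2).} For $(i')\Rightarrow(ii')$, let $L$ be recognizable. By Lemma~\ref{l:rec and congru}, $\sim_L$ has finite index, so by $(i')$ the function $f$ preserves $\sim_L$. Lemma~\ref{f^-1-dansBA} then places $f^{-1}(L)$ in $\Bool_{\+A}^\infty(L)$, and Lemma~\ref{l;recFinite} shows this equals the finite Boolean algebra $\Bool_{\+A}(L)$.

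For $(ii')\Rightarrow(i')$, let $\sim$ be a finite index congruence and again take $L$ to be the $\sim$-class of $f(x)$. Since $L$ is saturated for a finite index congruence, Lemma~\ref{l:rec and congru} shows $L$ is recognizable. By $(ii')$, $f^{-1}(L)\in\Bool_{\+A}(L)\subseteq\Bool_{\+A}^\infty(L)$. Applying Lemma~\ref{bool:saturated}~1) and repeating the argument of part~1) shows that $f$ preserves $\sim$.

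The only step needing care is the first one: one has to check that Lemma~\ref{f^-1-dansBA} really yields membership in the complete Boolean algebra. This holds because each $I_a$ is an intersection of sets $\gamma^{-1}(L)$ and complements $A\setminus\gamma^{-1}(L)$, and such intersections, and unions of them, lie in $\Bool_{\+A}^\infty(L)$. The degenerate cases are also covered: an empty family of $\gamma$'s gives $I_a=A$, and $f^{-1}(L)=\emptyset$ gives an empty union. The case split in Lemma~\ref{f^-1-dansTreillis} is unnecessary here. Since the identity lies in $\DUO(\+A)$, every $a$ is constrained by both kinds of conditions, so $I_a$ is exactly the $\sim_L$-class of $a$.
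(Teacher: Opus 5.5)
Your proposal is correct and follows the paper's proof essentially step for step. For the forward implications you use Lemma~\ref{f^-1-dansBA}, together with Lemma~\ref{l;recFinite} in part 2. For the converses you take the $\sim$-class of $f(x)$ and apply Lemma~\ref{bool:saturated}~1).

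One minor quibble concerns your closing aside. The identity in $\DUO(\+A)$ contributes to only one of the two intersections defining $I_a$; for example, when $L=A$ the second intersection is empty. However, $I_a=\{c\mid c\sim_L a\}$ follows directly from the definition of $\sim_L$ anyway, so nothing in the argument is affected.
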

\begin{proof}
1)  $(i)\Longrightarrow (ii)$ :  follows from Lemma \ref{f^-1-dansBA}.

$(ii)\Longrightarrow (i)$.
Assume that $f^{-1}(L)$ is in $\Bool_{\+A}^\infty(L)$ for all $L$.
Let $\sim$ be an $\+A$-congruence and $x\in A$ 
and consider the $\sim$-class $L$ of $f(x)$.
The assumed condition ensures that $f^{-1}(L)$ is in $\Bool_{\+A}^\infty(L)$.
Since $L$ is $\sim$-saturated so is also every set in $\Bool_{\+A}^\infty(L)$
(by Lemma~\ref{bool:saturated}). In particular,  $f^{-1}(L)$ is $\sim$-saturated.
 As $L$ is the class of $f(x)$, $x\in f^{-1}(L)$; as $f^{-1}(L)$ is $\sim$-saturated,
 if $y\sim x$ then $y\in f^{-1}(L)$
hence $f(y)\in L$ and therefore $f(y)\sim f(x)$
($L$ is the class of $f(x)$).
This shows that $f$ preserves the congruence $\sim$. 

2)
$(i')\Longrightarrow (ii')$.
If $L$ is recognizable then $\sim_L$ has finite index and hypothesis $(i')$ ensures that
$f$ preserves $\sim_L$. Applying Lemma \ref{f^-1-dansBA} and Lemma~\ref{l;recFinite}
we see that $f^{-1}(L)\in\Bool_\+A(L)$.

$(ii')\Longrightarrow (i')$.
Let $\sim$ be a congruence with finite index, let $x\in A$ 
and consider the $\sim$-class $L$ of $f(x)$.
By  Lemma~\ref{l:rec and congru} the set $L$ is $\+A$-recognizable.
By condition $(ii')$,  $f^{-1}(L)$ is in $\Bool_{\+A}(L)$.
As $L$ is $\sim$-saturated so is also every set in $\Bool_{\+A}(L)$
(Lemma~\ref{bool:saturated}). In particular,  $f^{-1}(L)$ is $\sim$-saturated.
Conclude as in the proof given for  1).
\end{proof}
%Theorem \ref{thm:cp bhull} proves the equivalences $(C)\Longleftrightarrow(\widetilde{C})$ and $(C_{\textit{fi}})\Longleftrightarrow(\widetilde{C_{\textit{fi}}})$ of Theorem \ref{thm main}.
The lower half of Figure \ref{figSerge}    illustrates Theorem \ref{thm:cp bhull}.
\begin{figure}[h]
\center
\resizebox{0.4\textwidth}{!}{
\begin{tikzpicture}
\coordinate (P1) at (0,6) ;
\coordinate (P1C1) at (0,5.7) ;
\coordinate (P1Q1) at (0.3,5.7) ;
\coordinate (P1P2) at (0.5,6) ;

\coordinate (P2) at (6,6) ;
\coordinate (P2C2) at (6,5.7) ;
\coordinate (P2Q2) at (5.7,5.7) ;
\coordinate (P2P1) at (5.5,6) ;

\coordinate (C1) at (0,0) ;
\coordinate (C1P1) at (0,0.3) ;
\coordinate (C1D1) at (0.3,0.3) ;
\coordinate (C1C2) at (0.5,0) ;

\coordinate (C2) at (6,0) ;
\coordinate (C2C1) at (5.5,0) ;
\coordinate (C2D2) at (5.7,0.3) ;
\coordinate (C2P2) at (6,0.3) ;

\coordinate (Q1) at (2,4) ;
\coordinate (Q1Q2) at (2.5,4) ;
\coordinate (Q1D1) at (2,3.7) ;
\coordinate (Q1P1) at (1.7,4.3) ;

\coordinate (Q2) at (4,4) ;
\coordinate (Q2Q1) at (3.5,4) ;
\coordinate (Q2D2) at (4,3.7) ;
\coordinate (Q2P2) at (4.3,4.3) ;

\coordinate (D1) at (2,2) ;
\coordinate (D1D2) at (2.5,2) ;
\coordinate (D1Q1) at (2,2.4) ;
\coordinate (D1C1) at (1.7,1.7) ;

\coordinate (D2) at (4,2) ;
\coordinate (D2D1) at (3.5,2) ;
\coordinate (D2Q2) at (4,2.4) ;
\coordinate (D2C2) at (4.3,1.7) ;

\draw (P1) node{$(P)$} ;
\draw (P2) node{$(P_{\textit{fi}})$} ;
\draw (C1) node{$(C)$} ;
\draw (C2) node{$(C_{\textit{fi}})$} ;
\draw (Q1) node{$(\widetilde{P})$} ;
\draw (Q2) node{$(\widetilde{P_{\textit{fi}}})$} ;
\draw (D1) node{$(\widetilde{C})$} ;
\draw (D2) node{$(\widetilde{C_{\textit{fi}}})$} ;

\draw [thick,double,->] (P1P2) -- (P2P1) ; %HORIZONTALES
\draw [thick,double,->] (Q1Q2) -- (Q2Q1) ;
\draw [thick,double,->] (D1D2) -- (D2D1) ;
\draw [thick,double,->] (C1C2) -- (C2C1) ;
\draw [thick,double,->] (P1C1) -- (C1P1) ; %VERTICALES
\draw [thick,double,->] (Q1D1) -- (D1Q1) ;
\draw [thick,double,->] (Q2D2) -- (D2Q2) ;
\draw [thick,double,->] (P2C2) -- (C2P2) ;
\draw [thick,double,<->] (P1Q1) -- (Q1P1) ; %OBLIQUES
\draw [thick,double,<->] (P2Q2) -- (Q2P2) ;
\draw [thick,double,<->] (C1D1) -- (D1C1) ;
\draw [thick,double,<->] (C2D2) -- (D2C2) ;
\end{tikzpicture}
}
\caption{Some general answers to Question~\ref{question}}\label{figSerge}
\end{figure}

%
%%%%%%%%%%%%%%%%%%%%%%%%%%%%%%%%
\subsection{Residually finite algebras, recognizability and lattices}
\label{ss residually finite}
%%%%%%%%%%%%%%%%%%%%%%%%%%%%%%%%
We can obtain a general result closer to Theorem~\ref{thm:ipl1} 
by assuming   the algebra 
 is residually finite in some strong sense  (cf.\! Definition~\ref{def:residually finite c sp}):  in that case preservation of arbitrary preorders coincides with finite index preorder preservation, conditions $(\widetilde P)$ and $(\widetilde{P_{\textit{fi}}})$ are equivalent, and all four conditions of the topmost horizontal trapezium in Figure \ref{figSerge}
 are equivalent; similarly for the four conditions of the lowermost horizontal trapezium. 
 We need a  notion  of strong residual finiteness, defined below.  
\begin{definition}\label{def:residually finite c sp}

1)  A congruence on an algebra $\+A$ is said to be {\em c-residually finite}
if it is the intersection of a family of finite index congruences.

2) A stable preorder on an algebra $\+A$ is said to be {\em sp-residually finite}
if it is the intersection of a family of stable preorders
all of which have finite index.

3) An algebra $\+A$ is  {\em c-residually finite} (resp. {\em sp-residually finite}) if all congruences (resp.  stable preorders)  on $\+A$ 
are c-residually finite (resp.  sp-residually finite).
\end{definition}
\begin{remark} 1) The usual notion of residually finite algebra (see \cite{almeida} page 23, \cite{KaarliPixley} page 102)
requires that morphisms into finite algebras 
separate points, i.e., if $x\neq y$ then there exists a morphism $\varphi$ into a finite algebra 
such that $\varphi(x)\neq\varphi(y)$.
This notion is equivalent to the c-residual finiteness of a single congruence,  the  identity congruence,
hence it is weaker than our notion of c-residually finite algebra. See Example  \ref{ex:PrimeSupport}   2) below. 

2) In the commutative case,  residually finite finitely generated commutative semigroups are also c-residually finite, see Nordahl \cite{N1,N2} for references. In view of this remark, we conjecture that our results could be generalized to finitely generated free commutative monoids and direct products thereof.
\end{remark}

 Every congruence being a preorder, 
Definition~\ref{def:residually finite c sp} gives a priori two notions of
residual finiteness for a congruence. Both notions can easily be proven to coincide.
\if 34
\begin{lemma}\label{l:p residually finite implies c}
1) If a stable preorder $\preceq$ is sp-residually finite then its associated congruence $\sim$
is c-residually finite.

2) A congruence is c-residually finite if and only if, as a preorder, it is sp-residually finite. Hence an sp-residually finite algebra is also c-residually finite.
\end{lemma}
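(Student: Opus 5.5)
Part 1 comes first. Let $\preceq$ be an sp-residually finite stable preorder, so $\preceq\;=\bigcap_{j\in J}\preceq_j$, where each $\preceq_j$ is a stable preorder of finite index. Let $\sim_j$ denote the congruence associated with $\preceq_j$, that is, $x\sim_j y$ iff $x\preceq_j y$ and $y\preceq_j x$. By the definition of finite index for preorders, each $\sim_j$ has finite index. The associated congruence $\sim$ of $\preceq$ satisfies
\[
x\sim y \iff \forall j\ (x\preceq_j y)\ \wedge\ \forall j\ (y\preceq_j x) \iff \forall j\ (x\sim_j y),
\]
so $\sim\;=\bigcap_j \sim_j$. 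This exhibits $\sim$ as an intersection of finite index congruences, i.e.\ $\sim$ is c-residually finite. The only point to check is that each $\sim_j$ really is a congruence. It is an equivalence relation. It is stable because $\preceq_j$ is compatible with every operation, hence so is its symmetric part, by the argument componentwise in (\ref{eq:preserveR}).

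For the first implication of part 2, suppose the congruence $\sim$ is c-residually finite, say $\sim\;=\bigcap_j\equiv_j$ with each $\equiv_j$ a finite index congruence. Each $\equiv_j$ is also a stable preorder. Its associated congruence is $\equiv_j$ itself, since it is symmetric, so it has finite index as a preorder. Hence $\sim$ is sp-residually finite.

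For the converse, suppose $\sim$, viewed as a preorder, is $\bigcap_j\preceq_j$ with finite index stable preorders $\preceq_j$. Then $\sim$ is symmetric, so its associated congruence is $\sim$ itself. Part 1 then gives $\sim\;=\bigcap_j\sim_j$ with each $\sim_j$ of finite index, so $\sim$ is c-residually finite.

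The last claim follows from this. In an sp-residually finite algebra, every congruence is in particular a stable preorder and hence sp-residually finite. By the converse just proved, it is then c-residually finite, so the algebra is c-residually finite.

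I expect no real obstacle. The one slightly subtle step is noticing that the symmetric part commutes with intersections, which is just the logical equivalence displayed above. The other is remembering that finite index for a preorder is defined through its associated congruence, so a congruence viewed as a preorder has finite index exactly when it has finite index as a congruence.
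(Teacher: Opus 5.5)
Your proof is correct and follows the paper's argument. Part 1 establishes $\sim=\bigcap_j\sim_j$; part 2 views finite index congruences as finite index preorders in one direction and applies part 1 to a symmetric preorder in the other. The only cosmetic difference is that the paper proves $\sim=\bigcap_j\sim_j$ by double inclusion, using that $\sim$ is the largest congruence contained in $\preceq$, whereas you get it directly from the equivalence $x\sim y\iff\forall j\,(x\sim_j y)$.
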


\begin{proof}
1) Let $(\preceq_i)_{i\in I}$ be a family of stable preorders such that ${\preceq}={\bigcap_{i\in I} \preceq_i}$.
Let $\sim_i$ be the congruence associated to $\preceq_i$.
We show that ${\sim}={\bigcap_{i\in I} \sim_i}$.
As $\sim_i$ is included in $\preceq_i$ we have
$(\bigcap_{i\in I} \sim_i) \subseteq (\bigcap_{i\in I} \preceq_i) = {\preceq}$.
As $\bigcap_{i\in I} \sim_i$ is a congruence, the last inclusion yields
$(\bigcap_{i\in I} \sim_i) \subseteq {\sim}$.
The inclusion of the congruence $\sim$ in the preorder $\preceq$ 
together with the inclusion ${\preceq}\subseteq{\preceq_i}$  imply 
the inclusion ${\sim} \subseteq {\preceq_i}$.
As $\sim$ is a congruence, this last inclusion yields ${\sim} \subseteq {\sim_i}$.
Thus, ${\sim} \subseteq (\bigcap_{i\in I} \sim_i)$.

2) If a congruence $\sim$ is c-residually finite and 
${\sim}=\bigcap_{i\in I}\sim_i$ then the congruences $\sim_i$'s, being also preorders,
witness that $\sim$ is sp-residually finite. 
Conversely, applying 1) to a congruence $\sim$, we see that if $\sim$ is sp-residually finite
then it is also c-residually finite.
\end{proof}
\fi

%Before presenting some examples we need a convenient representation of stable preorders
%in the vein of Lemma~\ref{l:cp and spp on N}.
%For simplicity, we state it for the algebra $\langle \N^2;+\rangle$, 
%the extension to $\langle \N^k;+\rangle$, $k\geq2$, being routine.
%
%\begin{lemma}\label{l:cp and spp}
%Let $R$ a binary relation on $\N^2$.
%For $a,b\in A$ let 
%\begin{eqnarray*}
%M_{(a,b)}^{++} &=& \{(x,y)\in\N^2 \mid (a,b)\ R\ (a+x,b+y)\}\\
%M_{(a,b)}^{-+} &=& \{(x,y)\in\N^2 \mid x\leq a \text{ and }(a,b)\ R\ (a-x,b+y)\}\\
%M_{(a,b)}^{+-} &=& \{(x,y)\in\N^2 \mid y\leq b \text{ and }(a,b)\ R\ (a+x,b-y)\}\\
%M_{(a,b)}^{--} &=& \{(x,y)\in\N^2 \mid x\leq a \text{ and }y\leq b \text{ and }(a,b)\ R\ (a-x,b-y)\}
%\end{eqnarray*}
%
%1) If $R$ is a stable preorder on $\langle \N^2;+\rangle$ then, for all $(a,b)\in\N^2$,
%\begin{itemize}\label{cond:R}
%\item[(i)]
%$M_{(a,b)}^{++}$ is a submonoid of $\langle\N^2;+\rangle$,
%\item[(ii)]
%$(0,0)\in M_{(a,b)}^{-+}$ and if $(x,y),(x',y')\in M_{(a,b)}^{-+}$ and $x+x'\leq a$
%then $(x+x',y+y')\in M_{(a,b)}^{-+}$,
%\item[(iii)]
%$(0,0)\in M_{(a,b)}^{+-}$ and if $(x,y),(x',y')\in M_{(a,b)}^{+-}$ and $y+y'\leq b$
%then $(x+x',y+y')\in M_{(a,b)}^{+-}$.
%\item[(iv)]
%For all $(u,v)\in\N^2$ and $\sigma\in\{++,-+,+-\}$,
%we have $M_{(a,b)}^\sigma \subseteq M_{(a,b)+(u,v)}^\sigma$. 
%\end{itemize}
%
%2) If the family $(M_{(a,b)})_{(a,b)\in\N^2}$ satisfies conditions (i) to (iv)
%then the relation 
%$$
%\preceq = \{(a,b),(a,b)+(x,y)) \mid (x,y)\in M_{(a,b)}^{++}\cup M_{(a,b)}^{-+}\cup M_{(a,b)}^{+-}\}
%$$
%is a stable preorder on $\langle \N^2;+\rangle$.
%\end{lemma}

\begin{example}\label{ex:PrimeSupport}
 1)  The free monoid $\langle \Sigma^*;\cdot\rangle$ is residually finite in the usual sense as equality is residually finite \cite{resfinite}, but it is not c-residually finite as soon as  $|\Sigma|\geq 2$ (the bicyclic monoid, which is the quotient  of $\{a,b\}^*$ by the congruence generated by $ab\equiv \varepsilon$, is not residually finite, see Lallement \cite{Lallement}). If $|\Sigma|< 2$ then $\langle \Sigma^*;\cdot\rangle$ is residually finite as is is isomorphic to $\langle \N;+\rangle$, see 2).
 
 2) Integer (semi)-groups $\langle \N;+\rangle$ and $\langle \Z;+\rangle$ 
are c-residually finite as every non trivial congruence is of finite index (cf.\!  Lemmata \ref{folk0} and \ref{lem:CPsurA}) and the identity congruence is the intersection of all non trivial congruences.

3)  Slightly more complex than the previous example, for $k\geq 2$,
the algebra $\langle \N^k;+\rangle$ admits c-residually finite congruences having infinite index, e.g., the congruence $\vec{x}\sim \vec{y} \Leftrightarrow x_1= y_1$. 

3) On the algebra of integers with multiplication 
$\langle \N;\times\rangle$, there exist c-residually finite non trivial congruences with  infinite index.  Consider on $\langle \N;\times\rangle$  the congruence $x\sim  y$ if and only if $x$ and $y$ have the same set of prime divisors:  $\sim $ has infinite index.  For each prime number $p$ let $\sim _p$ be the congruence $x\sim_p y$ if and only if $p$ divides both $x,y$ or neither of them. Each $\sim _p$ has finite index 2 and $\sim\  =\cap_{p\in \PP} \sim _p$.

 4) The algebra $\langle \Q;+\rangle$ is not  even residually finite in the usual sense. The only finite index congruence on $\langle \Q;+\rangle$ is the trivial one where $q\sim q'$ for all $q,q'$ in $\Q$. Indeed, recall that congruences are in bijection with normal subgroups, and the only finite index subgroup of $\langle \Q;+\rangle$ is $\Q$, see \cite{resinfinite}. In particular, the  congruence $x\sim y \ \Longleftrightarrow \   (x-y) \in \Z$ is not c-residually finite.
\end{example}
\begin{lemma}\label{l:residually finite utile}
Let $\+A = \langle A;\Xi\rangle$ be an algebra and $f:A\to A$.

1) If $\+A$ is sp-residually  finite and  $f$ preserves all finite index stable preorders then $f$ is stable preorder preserving, hence, $(P)\Longleftrightarrow(P_{\textit{fi}})$.

2) If $\+A$ is c-residually finite and $f$ preserves all finite index congruences then $f$ is congruence preserving, hence, $(C)\Longleftrightarrow(C_{\textit{fi}})$.
\end{lemma}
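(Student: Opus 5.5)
The argument is a direct intersection argument, run the same way for preorders and for congruences.

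For 1), let $\preceq$ be an arbitrary $\+A$-stable preorder. Since $\+A$ is sp-residually finite, write ${\preceq}=\bigcap_{i\in I}\preceq_i$ with each $\preceq_i$ a stable preorder of finite index.

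Now let $x\preceq y$. Then $x\preceq_i y$ for every $i\in I$. The hypothesis says $f$ preserves each $\preceq_i$, so $f(x)\preceq_i f(y)$ for every $i$. Hence $f(x)\preceq f(y)$, which is exactly compatibility of $\preceq$ with $f$.

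Note that $f$ is unary here, so Definition~\ref{def:fpreserveR} with $p=1$ applies directly and no freezification via Lemma~\ref{lem:de n a 1} is needed. This gives $(P_{\textit{fi}})\Rightarrow(P)$. The converse $(P)\Rightarrow(P_{\textit{fi}})$ is trivial, because finite index stable preorders are in particular stable preorders.

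For 2), the argument is identical. Take an arbitrary $\+A$-congruence $\sim$ and write ${\sim}=\bigcap_{i\in I}\sim_i$ with each $\sim_i$ a congruence of finite index, using c-residual finiteness. If $x\sim y$, then $x\sim_i y$ for all $i$, so $f(x)\sim_i f(y)$ for all $i$, and therefore $f(x)\sim f(y)$. Again $(C)\Rightarrow(C_{\textit{fi}})$ is immediate.

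There is no real obstacle. The only point to check is the degenerate case of an empty family $I$, which would mean ${\preceq}=A\times A$; compatibility with the full relation is trivial, so the conclusion still holds.
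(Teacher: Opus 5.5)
Your proposal is correct and follows the paper's proof: you write the stable preorder (resp. congruence) as an intersection of finite index ones using residual finiteness, push $x\preceq_i y$ through $f$ for each $i$, and intersect again. Your additions, the trivial converse and the empty-family case (where $A\times A$ is itself a finite index stable preorder), are harmless completions that the paper leaves implicit.
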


\begin{proof}
1) Let $\preceq$ be a stable preorder. 
The  hypothesis of sp-residual finiteness of $\+A$ ensures that $\preceq$ is sp-residually  finite:
there exists a family of stable preorders $(\preceq_i)_{i\in I}$  
with associated congruences having finite indexes, such that ${\preceq}={\cap_{i\in I} \preceq_i}$.
Thus, $a\preceq b$ if and only if, for all $i\in I$, $a\preceq_i b$.  
The hypothesis ensures that $f$ preserves the $\preceq_i$'s hence 
$f(a)\preceq_i f(b)$ for all $i\in I$. This yields $f(a)\preceq f(b)$.

The proof of 2) is similar.
\end{proof}
  An immediate consequence of the above Lemma is  Theorem \ref{th:RecLatGeneralResiduallyFinite},   improving  item 2)  of respectively Theorem~\ref{th:LatticeGen} for 
sp-residually finite algebras and Theorem~\ref{thm:cp bhull} for 
c-residually finite algebras.

\begin{theorem}\label{th:RecLatGeneralResiduallyFinite} 
1) Let $\+A = \langle A;\Xi\rangle$ be an  sp-residually finite algebra:
$f\colon  A\to A$ is stable preorder preserving (i.e.,  property $(P)$ holds) if and only if  for every $\+A$-recognizable $L$, $f^{-1}(L)$ is in the lattice $\Latt_{\+A}(L)$ (property $(\widetilde{P_{\textit{fi}}})$ holds). \\
2) Let $\+A = \langle A;\Xi\rangle$ be  a c-residually finite algebra:
 $f\colon  A\to A$ is  congruence preserving (property $(C)$ holds) if and only if for every $\+A$-recognizable $L\subseteq A$,
$ f^{-1}(L)$ is in the Boolean algebra $\Bool_{\+A}(L)$   (property $(\widetilde{C_{\textit{fi}}})$ holds).
\end{theorem}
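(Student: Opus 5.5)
The plan is to chain the equivalences already established in Theorem~\ref{th:LatticeGen}, Theorem~\ref{thm:cp bhull} and Lemma~\ref{l:residually finite utile}; no new construction should be needed.

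For item 1), let $\+A$ be sp-residually finite. First, Theorem~\ref{th:LatticeGen}~2) gives $(P_{\textit{fi}})\Longleftrightarrow(\widetilde{P_{\textit{fi}}})$ for an arbitrary algebra. Next, Lemma~\ref{l:residually finite utile}~1) gives $(P_{\textit{fi}})\Longrightarrow(P)$ under sp-residual finiteness. The converse $(P)\Longrightarrow(P_{\textit{fi}})$ is trivial, since finite index stable preorders are in particular stable preorders. Chaining these yields $(P)\Longleftrightarrow(P_{\textit{fi}})\Longleftrightarrow(\widetilde{P_{\textit{fi}}})$, which is the claim.

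Item 2) runs in parallel. Theorem~\ref{thm:cp bhull}~2) gives $(C_{\textit{fi}})\Longleftrightarrow(\widetilde{C_{\textit{fi}}})$. Lemma~\ref{l:residually finite utile}~2) gives $(C_{\textit{fi}})\Longrightarrow(C)$ for c-residually finite $\+A$, and $(C)\Longrightarrow(C_{\textit{fi}})$ is immediate. Hence $(C)\Longleftrightarrow(\widetilde{C_{\textit{fi}}})$.

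The only step needing any care is the use of Lemma~\ref{l:residually finite utile}~1). One must check that the family $(\preceq_i)$ witnessing sp-residual finiteness consists of stable preorders whose associated congruences have finite index, i.e.\ preorders of finite index in the sense of the definition. This is exactly what condition $(P_{\textit{fi}})$ quantifies over, so the lemma applies verbatim. Otherwise the argument is a bookkeeping of implications, and I expect no real obstacle.
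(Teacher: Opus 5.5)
Your proposal is correct and follows essentially the paper's route: the paper presents this theorem as an immediate consequence of Lemma~\ref{l:residually finite utile} combined with item 2) of Theorem~\ref{th:LatticeGen} (for preorders) and of Theorem~\ref{thm:cp bhull} (for congruences). Your check on the witnessing family is also right: in the definition of sp-residual finiteness, ``finite index'' means exactly that the associated congruence has finite index, so Lemma~\ref{l:residually finite utile}~1) applies as stated.
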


\begin{remark} %If $\+A $ is sp-residually finite, Theorem \ref{th:RecLatGeneralResiduallyFinite} states the equivalence:
%$(P)\colon f\colon  A\to A$ is stable preorder preserving if and only if  
%$(\widetilde {P_{\textit{fi}}})\colon f^{-1}(L)$ is in the lattice $\Latt_{\+A}(L)$ for every $\+A$-recognizable $L\subseteq A$.
An instance of Theorem \ref{th:RecLatGeneralResiduallyFinite} for the sp-residually finite algebra $\+N=\langle \N ;  +, \times\rangle$
is the characterization of congruence preservation given in Theorem \ref{thm:ipl1}. 
Indeed, Theorem \ref{prop:miracle}, 
shows that on $\+N$ a function $f$ is stable preorder preserving 
if and only if it is monotone non  decreasing and congruence preserving.  Theorem \ref{thm:CPsurN} shows that a
 non constant congruence preserving $f$ satisfies $f(x)\geq x$ for all $x$.  Thus on $\+N$, Theorem \ref{thm:ipl1}  becomes a consequence of Theorem \ref{th:RecLatGeneralResiduallyFinite}.
\end{remark}

%\begin{proof}
% Applying implication $(ii')\Rightarrow(i')$ of Theorem~\ref{thm:cp bhull}, we already know that $f$ preserves finite index congruences. To conclude apply Lemma~\ref{l:residually finite utile}.
%\end{proof}

%%%%%%%%%%%%%%%%%%%%%%%%%%%%%%%%
\subsection{ When there is a Malcev term}
\label{ss:when there is a group operation}
%%%%%%%%%%%%%%%%%%%%%%%%%%%%%%%%

The existence of a Malcev term in the algebra considerably simplifies the situation. Recall that, cf.  \cite{malcev} 1954, cf. also \cite{nlab}.
\begin{definition}\label{def Malcev}
Given an algebra $\langle A;\Xi\rangle$, a term $t(x,y,z)$
is a {\em Malcev term} if it satisfies the following conditions for all   $x,y,z$ :
\begin{equation*}\label{eq Malcev}
t(x,z,z)=x \qquad t(x,x,z)=z
\end{equation*}
\end{definition}
\begin{lemma}\label{l Malcev}
If algebras $\+A_i = \langle A_i; \Xi \rangle$ having the same signature all admit a Malcev term $t_i$, then any  direct product of such algebras also admits a Malcev term which is the product of the $t_i$'s.
\end{lemma}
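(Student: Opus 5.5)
The plan is to build the Malcev term of the product componentwise and to check the two Malcev identities coordinate by coordinate. Let $\+A=\prod_{i\in I}\+A_i$ be the direct product. Its carrier is the set of families $\vec a=(a_i)_{i\in I}$ with $a_i\in A_i$, and each operation $\xi\in\Xi$ of arity $n$ acts componentwise: $\xi^{\+A}(\vec a^{1},\ldots,\vec a^{n})=\big(\xi^{\+A_i}(a^1_i,\ldots,a^n_i)\big)_{i\in I}$.

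The first step is to make precise what ``the product of the $t_i$'s'' means. If all $t_i$ are the same term $t$ of the common signature $\Xi$, which is the situation in a variety such as groups or rings, then I take $t$ itself as the candidate for $\+A$. In general the $t_i$ may be different terms, so I define a ternary operation $t$ on $A$ coordinatewise by
\[
t(\vec x,\vec y,\vec z)=\big(t_i^{\+A_i}(x_i,y_i,z_i)\big)_{i\in I}.
\]

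The second step is the key observation that term functions of a product are computed coordinatewise. I would prove by a routine induction on the structure of a term $s(x,y,z)$ that $s^{\+A}(\vec x,\vec y,\vec z)_i=s^{\+A_i}(x_i,y_i,z_i)$. Variables are immediate, and the inductive step for $\xi(s_1,\ldots,s_n)$ follows from the componentwise definition of $\xi^{\+A}$.

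The third step checks the identities. For every $i$ we have $t(\vec x,\vec z,\vec z)_i=t_i(x_i,z_i,z_i)=x_i$, so $t(\vec x,\vec z,\vec z)=\vec x$. Likewise $t(\vec x,\vec x,\vec z)_i=t_i(x_i,x_i,z_i)=z_i$, so $t(\vec x,\vec x,\vec z)=\vec z$.

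The main obstacle is not these computations but the case where the $t_i$ are genuinely different terms. The coordinatewise $t$ is then a well-defined operation satisfying both Malcev identities, but it need not be a single term of $\Xi$. With finitely many factors this cannot be forced in general. With infinitely many distinct $t_i$ it may fail outright. I would therefore state the conclusion as follows: $t$ is a Malcev term when the $t_i$ coincide, and otherwise it is a Malcev operation of $\+A$ (coordinatewise the Malcev term function). I would also note that the uses made later in the paper only need the coordinatewise identities, for example through the product structure of $\Zhat$, where the group subtraction gives the same term $x-y+z$ in every factor.
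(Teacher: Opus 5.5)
The paper states this lemma without proof. Your argument is the evident intended one: term operations of a product are computed componentwise, so the two Malcev identities can be checked factor by factor. It is correct when all the $t_i$ are one and the same term.

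Your caveat about distinct $t_i$ is also correct, and it can be sharpened: the lemma as literally stated is false, already for two factors. Take two ternary symbols $p,q$, with $\+A_1=\langle\{0,1\};\,p=x\oplus y\oplus z,\ q=x\wedge y\wedge z\rangle$ and $\+A_2=\langle\{0,1\};\,p=x\wedge y\wedge z,\ q=x\oplus y\oplus z\rangle$. Then $p$ is a Malcev term of $\+A_1$ and $q$ is one of $\+A_2$. The set $\rho=\{(0,0),(0,1),(1,0)\}$ is a subuniverse of $\+A_1\times\+A_2$. Indeed, if the $\wedge$-coordinate of an output is $1$, every input has that coordinate $1$. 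Since $(1,1)\notin\rho$, each input then has other coordinate $0$, so the $\oplus$-coordinate of the output is $0$. Now suppose $t$ were a Malcev term of the product. The Malcev identities in each factor give $t\big((1,0),(0,0),(0,1)\big)=\big(t^{\+A_1}(1,0,0),\,t^{\+A_2}(0,0,1)\big)=(1,1)$, which is not in $\rho$. So in general the ``product of the $t_i$'s'' is only a Malcev operation, exactly as you conclude.

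One remark of yours should be corrected: the later uses do not need merely the coordinatewise identities. The proof of Proposition~\ref{p Malcev} applies $t$ to pairs in a stable relation. It therefore needs $t$ to be compatible with every stable reflexive relation. This is automatic for term (or polynomial) operations, but not for an operation that is only defined coordinatewise. What makes the paper's applications go through is your first case: a single expression $x-y+z$ works in every factor. Strictly speaking, in the signature $\{+,\times\}$ this is a polynomial using the constant $-1$ rather than a term. That still suffices for Proposition~\ref{p Malcev}, since the relations there are reflexive.
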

\begin{proposition}\label{p Malcev}
Assume algebra  $\+A = \langle A; \Xi \rangle$ has  a Malcev term; then every $\+A$-stable reflexive relation is an $\+A$-congruence and every $\+A$-stable 
preorder is an $\+A$-congruence.
\end{proposition}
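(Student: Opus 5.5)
The plan is to show that any $\+A$-stable reflexive relation $R$ is symmetric and transitive. It is then an $\+A$-stable equivalence relation, i.e.\ an $\+A$-congruence. Since every $\+A$-stable preorder is in particular an $\+A$-stable reflexive relation, the second claim follows at once from the first.

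\emph{Step 1: $R$ is compatible with the Malcev term $t$.} I first check that $R$ is compatible (in the sense of Definition~\ref{def:fpreserveR}) with every term function of $\+A$, by induction on the structure of terms. A projection $(x_1,\ldots,x_n)\mapsto x_i$ is trivially compatible with $R$. If $\xi\in\Xi$ is compatible with $R$ (this is stability) and $s_1,\ldots,s_k$ are term functions compatible with $R$, then so is $\xi(s_1,\ldots,s_k)$: from $x_j\,R\,y_j$ for all $j$ we get $s_i(\vec x)\,R\,s_i(\vec y)$ for each $i$, and applying $\xi$ gives the conclusion. For $0$-ary operations we use reflexivity, and reflexivity is also what lets us feed identical pairs into $t$ below. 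Consequently, whenever $a_1\,R\,b_1$, $a_2\,R\,b_2$ and $a_3\,R\,b_3$, we have $t(a_1,a_2,a_3)\,R\,t(b_1,b_2,b_3)$.

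\emph{Step 2: symmetry.} Assume $x\,R\,y$. Take the three related pairs $(y,y)$, $(x,y)$, $(x,x)$; the first and last are in $R$ by reflexivity. Step~1 gives $t(y,x,x)\,R\,t(y,y,x)$. The Malcev identities rewrite this as $y\,R\,x$.

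\emph{Step 3: transitivity.} Assume $x\,R\,y$ and $y\,R\,z$. Take the pairs $(x,y)$, $(y,y)$, $(y,z)$. Step~1 gives $t(x,y,y)\,R\,t(y,y,z)$, which by the Malcev identities is $x\,R\,z$.

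The only point needing care is Step~1, namely that stability passes from the basic operations to arbitrary terms; this is a routine induction. After that, the whole proof reduces to choosing the right triples of pairs, and the choices above work directly.
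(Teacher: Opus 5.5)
Your proposal is correct and follows the paper's proof: transitivity comes from $t(x,y,y)\,R\,t(y,y,z)$ exactly as in the paper, and symmetry comes from a Malcev instance built from reflexive pairs. For symmetry you use $t(y,x,x)\,R\,t(y,y,x)$ where the paper uses $t(x,x,y)\,R\,t(x,y,y)$, which is a mirror-image choice; your Step~1 simply makes explicit the routine extension of stability from the basic operations to term functions, which the paper uses implicitly.
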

\begin{proof}
If $\triangleleft$ is an $\+A$-stable reflexive relation then 

1) $\triangleleft$ is transitive. If 
 $x \triangleleft y$ and $y \triangleleft z$, because  $\triangleleft$ is  reflexive we have
 $y \triangleleft y$, and because  $\triangleleft$ is  $\+A$-stable we have
 $x=t(x,y,y) \triangleleft t(y,y,z)=z$, hence $x \triangleleft z$.
 
2) $\triangleleft$ is symmetric. If $x \triangleleft y$ then as $\triangleleft$ is  reflexive, $x \triangleleft x$ , $x \triangleleft y$ and $y \triangleleft y$,
by stability $y=t(x,x,y)\triangleleft t(x,y,y)=x$, whence symmetry.
\end{proof}
\begin{example}\label{ex Malcev} 1) Groups have a Malcev term : $t(x,y,z)= x-y+z$.

2) $\+Z= \langle \Z; + \rangle$ has a Malcev term and so do its  direct products.
\end{example}
The next Corollary is an immediate consequence of Proposition \ref{p Malcev}.
\begin{corollary}\label{coro:stable finite index preorder in group}
Let $\+A = \langle A;\Xi\rangle$ be an algebra having a Malcev term.

 1) Every stable preorder with finite index associated congruence is a congruence.

2) Every sp-residually finite preorder is a c-residually finite congruence.

3) If $\+A$ is c-residually finite then it is also sp-residually finite.

4) If $L\subseteq A$ is recognizable then its syntactic preorder is equal to its syntactic congruence. 
\end{corollary}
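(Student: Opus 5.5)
The plan is to derive all four items directly from Proposition~\ref{p Malcev}, which says that in an algebra with a Malcev term every $\+A$-stable preorder is already an $\+A$-congruence. No further use of the Malcev identities should be needed.

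For 1), let $\preceq$ be a stable preorder whose associated congruence $\sim$ has finite index. By Proposition~\ref{p Malcev}, $\preceq$ is itself a congruence, so it is symmetric. Hence $\sim$, defined by $x\preceq y\wedge y\preceq x$, coincides with $\preceq$. Thus $\preceq$ is a congruence, and it has finite index.

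For 2), let ${\preceq}=\bigcap_{i\in I}\preceq_i$ with each $\preceq_i$ a stable preorder of finite index. By 1), each $\preceq_i$ is a finite index congruence. An intersection of congruences is a congruence, and the intersection is also a congruence directly by Proposition~\ref{p Malcev}. So $\preceq$ is a congruence written as an intersection of finite index congruences, i.e.\ it is c-residually finite.

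For 3), let $\preceq$ be any stable preorder on $\+A$. By Proposition~\ref{p Malcev} it is a congruence. Since $\+A$ is c-residually finite, $\preceq$ is an intersection of finite index congruences. These are in particular stable preorders of finite index, because for a congruence the associated congruence is itself. Hence $\preceq$ is sp-residually finite.

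For 4), the syntactic preorder $\leq_L$ is a stable preorder by the lemma following Definition~\ref{syntacticL_congruence}. By Proposition~\ref{p Malcev} it is therefore symmetric. So $x\leq_L y$ implies $y\leq_L x$, and by Definitions~\eqref{eq:sord} and~\eqref{eq:scong} we get $x\sim_L y$. The converse inclusion ${\sim_L}\subseteq{\leq_L}$ is immediate from the definitions. Recognizability of $L$ is not actually needed here; it only matches the finite index setting of the corollary.

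None of these steps is a real obstacle. The only point to check carefully is in 1) and 3): the finite index of a congruence, viewed as a preorder, is computed through its associated congruence, and for a congruence that associated congruence is the congruence itself.
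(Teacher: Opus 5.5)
Your proposal is correct and follows the paper, which simply declares the corollary an immediate consequence of Proposition~\ref{p Malcev}; you spell out the four deductions and correctly handle the fact that a congruence viewed as a preorder has itself as associated congruence. Your remark that recognizability of $L$ is not needed in item 4) is also right, since Proposition~\ref{p Malcev} already makes $\leq_L$ symmetric for every $L$.
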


When there is a Malcev term in a c-residually finite algebra, Theorem \ref{th:LatticeGen} can be given a more interesting  form  by replacing stable preorder preservation by congruence preservation, complete lattices by bounded lattices and subsets by recognizable subsets:
there is a collapse of  all eight  conditions in Figure \ref{figSerge} involving 
congruence preservation, stable preorder preservation,
preimages of recognizable sets, lattices and Boolean algebras. The next theorem will be used in sections \ref{s Z} and \ref{s ZpZhat}.

\begin{theorem}\label{th:RecLatGeneralGroup} 
Let $\+A = \langle A;\Xi\rangle$ be a  c-residually finite algebra
having a Malcev term.  
Let $f\colon  A\to A$. The following conditions are equivalent:

\begin{tabular}{ r  l }
(i) & $f$ is stable preorder preserving,
\\
(ii) & $f$ preserves stable preorders having finite index associated congruences,
\\
(iii) & $f$ is congruence preserving,
\\
(iv) &
$f$ preserves finite index congruences,
\\
(v) & $f^{-1}(L)$ is in the lattice $\Latt_{\+A}(L)$ for every $\+A$-recognizable $L\subseteq A$,
\\
(vi) &
 $f^{-1}(L)$ is in the Boolean algebra $\Bool_{\+A}(L)$ for every $\+A$-recognizable 
$L\subseteq A$,
\\
(vii) &  $f^{-1}(L)$ is in the lattice $\Latt^\infty_{\+A}(L)$ for every $L\subseteq A$,
\\
(viii) &  $f^{-1}(L)$ is in the Boolean algebra $\Bool^\infty_{\+A}(L)$ for every $L\subseteq A$.
\end{tabular}
\end{theorem}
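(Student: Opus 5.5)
The plan is to assemble the eight conditions from the equivalences already proved, using the Malcev term to identify the preorder side with the congruence side. By Theorem~\ref{th:LatticeGen}, (i)$\Leftrightarrow$(vii) and (ii)$\Leftrightarrow$(v). By Theorem~\ref{thm:cp bhull}, (iii)$\Leftrightarrow$(viii) and (iv)$\Leftrightarrow$(vi). These hold for arbitrary algebras, so what remains is to link the preorder column to the congruence column, and the complete (arbitrary $L$) row to the finite-index (recognizable $L$) row.

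First, compare preorders and congruences. By Proposition~\ref{p Malcev}, every $\+A$-stable preorder is an $\+A$-congruence, and every congruence is trivially a stable preorder. So the stable preorders and the congruences of $\+A$ are exactly the same relations. This gives (i)$\Leftrightarrow$(iii) at once. For finite index, a stable preorder $\preceq$ is a congruence, so it coincides with its associated congruence. Hence "stable preorder with finite index associated congruence" is the same as "finite index congruence", which gives (ii)$\Leftrightarrow$(iv). This is also item 1) of Corollary~\ref{coro:stable finite index preorder in group}.

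Second, compare arbitrary with finite index. Since $\+A$ is c-residually finite, Lemma~\ref{l:residually finite utile} 2) gives (iv)$\Rightarrow$(iii); the converse (iii)$\Rightarrow$(iv) is trivial. By Corollary~\ref{coro:stable finite index preorder in group} 3), $\+A$ is also sp-residually finite, so Lemma~\ref{l:residually finite utile} 1) independently gives (i)$\Leftrightarrow$(ii). Either route suffices.

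Chaining these, (i)$\Leftrightarrow$(iii)$\Leftrightarrow$(iv)$\Leftrightarrow$(ii), and the general theorems attach (vii), (viii), (vi) and (v) to (i), (iii), (iv) and (ii) respectively. This establishes all eight equivalences.

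I expect no real obstacle. The only point needing care is checking that the finite-index notions match exactly. For a preorder, "finite index" refers to its associated congruence $x\preceq y\wedge y\preceq x$; when $\preceq$ is symmetric, this associated congruence is $\preceq$ itself. With that verified, the rest is bookkeeping over earlier results.
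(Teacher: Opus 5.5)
Your proposal is correct and is essentially the paper's proof: both assemble the eight conditions from Theorems~\ref{th:LatticeGen} and~\ref{thm:cp bhull}, the Malcev identification of stable preorders with congruences (Proposition~\ref{p Malcev} and Corollary~\ref{coro:stable finite index preorder in group}), and Lemma~\ref{l:residually finite utile}. The differences are only bookkeeping. You attach (v) and (vi) to (ii) and (iv) through the finite-index parts of the general theorems, whereas the paper attaches them to (i) and (iii) through Theorem~\ref{th:RecLatGeneralResiduallyFinite}. You also get (i)$\Leftrightarrow$(iii) directly from the Malcev term, which makes the sp-residual finiteness step optional.
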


\begin{proof}

Item 3) of Corollary~\ref{coro:stable finite index preorder in group} ensures that
$\+A$ is also sp-residually  finite.
Thus, equivalences $(i)\Leftrightarrow(ii)$ and $(iii)\Leftrightarrow(iv)$ are given by
Lemma~\ref{l:residually finite utile}.

Item 1) of Corollary~\ref{coro:stable finite index preorder in group} yields
the equivalence $(ii)\Leftrightarrow(iv)$.

Equivalences $(i)\Leftrightarrow(v)$ and $(iii)\Leftrightarrow(vi)$ are given by 
Theorem~\ref{th:RecLatGeneralResiduallyFinite} 1) and 2).

 As   $(i)\Leftrightarrow(vii)$ and $(iii)\Leftrightarrow(viii)$  follow respectively from Theorem~\ref{th:LatticeGen} 
and Theorem~\ref{thm:cp bhull}, all eight conditions are equivalent.
\end{proof}
%

%In Section \ref{s Z} we will apply  Theorem \ref{th:RecLatGeneralGroup} to $\langle\Z,+\rangle$.

%%%%%%%%%%%%%%%%%%%%%%%%%%%%
%%%%%%%%%%%%%%%%%%%%%%%%%%%%                    
%%%%%%%%%%%%%%%%%%%%%%%%%%%%
\section{Case of  natural integers}%, with the structure $\langle  \N ;+\rangle$ or $\langle  \N ;\suc\rangle$} 
\label{s frying pan}
%%%%%%%%%%%%%%%%%%%%%%%%%%%%
%%%%%%%%%%%%%%%%%%%%%%%%%%%%
%%%%%%%%%%%%%%%%%%%%%%%%%%%%
We now reinterpret Theorem \ref{thm:ipl1} using the notions introduced in Section \ref{s:def}. Let us first recall ``folk" results about congruences and recognizable sets of $\langle  \N ;+\rangle$.
%%%%%%%%%%%%%%%
\subsection{Congruences on $\langle\N;+\rangle$ and $\langle\N;+,\times\rangle$}\label{ss CN+}
%%%%%%%%%%%%%%%%
\begin{lemma} \label{folk0} A  congruence $\sim$  on $\langle  \N ;Suc\rangle$ or on $\langle  \N ;+\rangle$ is either  equality, or $\sim_{a,k}$ for some $a,k\in\N$, $k\geq1$ where $\sim_{a,k}$  is defined by
\begin{equation}\label{equation:ak}
x\sim_{a,k } y \mbox{  if and only if  }\begin{cases} \mbox{ either } x=y\\
\mbox{or }\  a \leq x\ ,\  a \leq y \mbox{ and }x\equiv  y\pmod k
\end{cases}.
\end{equation}
The congruence $\sim_{a,k}$ has finite index $a+k$.
%It is cancellable if and only if $a=0$.
\end{lemma}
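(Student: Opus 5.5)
Since $\freez^*(\suc)=\freez^*(+)=\{x\mapsto x+n\mid n\in\N\}$ (Figure~\ref{fig-freez}), Lemma~\ref{lem:de n a 1} shows that $\langle\N;\suc\rangle$ and $\langle\N;+\rangle$ have the same congruences: those equivalences $\sim$ with $x\sim y\Rightarrow x+1\sim y+1$. I will therefore work only with $\suc$-compatible equivalences.

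First I check that each $\sim_{a,k}$ really is a congruence. It is an equivalence relation: reflexivity and symmetry are clear, and transitivity follows from transitivity of congruence mod $k$ on $\{x\ge a\}$. It is compatible with $\suc$, since $x,y\ge a$ and $x\equiv y \pmod k$ give $x+1,y+1\ge a$ and $x+1\equiv y+1\pmod k$. Its classes are the $a$ singletons $\{0\},\dots,\{a-1\}$ and the $k$ residue classes mod $k$ restricted to $[a,\infty)$, so the index is $a+k$.

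For the converse, let $\sim$ be a congruence other than equality. The set of pairs $x<y$ with $x\sim y$ is then nonempty. Let $a$ be the least $x$ occurring in such a pair, and for this $a$ let $k=\min\{d\ge1\mid a\sim a+d\}$. The goal is to show ${\sim}={\sim_{a,k}}$.

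\textbf{Inclusion ${\sim_{a,k}}\subseteq{\sim}$.} Applying $\suc$ repeatedly to $a\sim a+k$ gives $x\sim x+k$ for every $x\ge a$. By transitivity, $x\sim x+mk$ for all $m$. Hence any $x,y\ge a$ with $x\equiv y\pmod k$ satisfy $x\sim y$.

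\textbf{Inclusion ${\sim}\subseteq{\sim_{a,k}}$.} Take $x<y$ with $x\sim y$. By minimality of $a$ we have $x\ge a$, so it remains to show $k\mid y-x$. Write $y-x=qk+r$ with $0\le r<k$. Using the previous paragraph, $x\sim x+qk$, hence $x\sim x+r$ by transitivity. Suppose $r>0$. Choose $t\ge0$ with $x+t\equiv a\pmod k$. Shifting $x\sim x+r$ by $t$ gives $x+t\sim x+t+r$. Since $x+t\ge a$ and $x+t\equiv a$, we also have $x+t\sim a$, and similarly $x+t+r\sim a+r$. Chaining these, $a\sim a+r$ with $0<r<k$, which contradicts the minimality of $k$. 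Therefore $r=0$ and $x\sim_{a,k}y$.

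The main obstacle is this last step: a congruence on $\N$ can only be shifted forward, never backward, so one cannot directly subtract $x-a$ from $x\sim x+r$. The trick is to shift forward until one lands in the residue class of $a$, and then use the inclusion already proved to transport the relation back down to $a$.
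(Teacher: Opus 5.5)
Your proof is correct and follows essentially the paper's argument: the same lexicographically least $(a,k)$ with $a\sim a+k$, the same inclusion ${\sim_{a,k}}\subseteq{\sim}$ by forward shifting, and the same trick of shifting forward until you wrap around to $a$, which contradicts the minimality of $k$. The paper phrases this last step as pairwise inequivalence of $0,\dots,a+k-1$ and leaves the final inclusion implicit, whereas you prove ${\sim}\subseteq{\sim_{a,k}}$ directly; one small point of precision is that $x\sim x+r$ follows from $x\sim y$ together with $x+r\sim x+r+qk=y$, not from $x\sim x+qk$, though both facts come from your first inclusion.
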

\if35
\begin{proof} Recall that a congruence $\sim$ is cancellable if $ax\sim ay $ implies $x\sim y $, and similarly for right cancellation. Let  $\equiv$ be a congruence on $\langle \N;+\rangle$ (or on $\langle  \N ;Suc\rangle$) which is not the identity: there are $a$ and $k>0$ such that $a\equiv a+k$. Choose the least (in lexicographic order) such $a,k$; then for all $j$, $(a+j)\equiv(a+j+k)$, hence $x\sim_{a,k } y$ implies $x\equiv y$. 

Moreover, all elements in $\{0,\ldots,a+k-1\}$ are pairwise nonequivalent modulo $\equiv$. 
First, if $0\leq x<a$ and $x < y $, then $x$ and $y$ cannot be equivalent modulo $\equiv$ as $a$ is the least one such that $a\equiv(a+k)$ for some $k$.  
 Finally, we show that if  $a \leq x<y<a+k$, then we also have $x\not\equiv y$.
Indeed, assume by contradiction that $x\equiv y$ and let $h=x-a$ and $\ell=y-x$. 
We then have $0\leq h< h+\ell<k$, $\ell>0$ and $a+h=x\equiv y=a+h+\ell<a+k$. 
As $\equiv$ is a +-congruence, we have $a+h+j\equiv a+h+\ell+j$ for all $j$.
Letting $j=k-(h+\ell)$ yields $a+h+j\equiv a+h+\ell+j=a+k$, hence, as $a\equiv a+k$,  by transitivity of $\equiv$, we get $a\equiv a+h+j$. As $h+j=k-\ell<k$, this contradicts the minimality of $k$.

If $a=0$ then  $\sim_{0,k}$ is the usual congruence modulo $k$ hence it is cancellable.
If $a\geq1$ then $a-1+k\sim_{a,k}a-1+2k$ but $a-1\not\sim_{a,k}a-1+k$
 hence $\sim_{a,k}$ is not cancellable.
  \end{proof}
\fi
A priori, congruences and recognizability 
strongly depend upon the signature. However, due to the properties of addition and multiplication on the integers in $\N$  we have

\begin{corollary}\label{+CPimpliesXCPN} 
The three structures 
$\langle \N;+,\times\rangle$, $\langle \N;+\rangle$ and $\langle \N;\Suc\rangle$
yield the same notions of congruence (namely, equality and the $\sim_{a,k}$'s),
congruence preserving function $\N\to \N$ and recognizable subset of $\N$.
\end{corollary}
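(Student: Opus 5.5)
The plan is to compare the three signatures through their $*$-freezifications (Figure~\ref{fig-freez}) together with Lemma~\ref{lem:de n a 1}. By that lemma, an equivalence relation is compatible with an operation exactly when it is compatible with all of its 1-freezifications. Hence the congruences of $\langle\N;\Xi\rangle$ are precisely the equivalences compatible with every map in $\freez(\Xi)$. We read off $\freez(\suc)=\{\suc\}$ and $\freez(+)=\{x\mapsto x+n\mid n\in\N\}$. An equivalence compatible with $\suc$ is compatible with each $x\mapsto x+n$, by iterating $n$ times. So $\langle\N;\suc\rangle$ and $\langle\N;+\rangle$ have the same congruences, and Lemma~\ref{folk0} identifies them as equality and the $\sim_{a,k}$.

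For $\langle\N;+,\times\rangle$, every $(+,\times)$-congruence is in particular a $+$-congruence, so only the converse inclusion needs work. Equality is trivially compatible with everything. It remains to check that each $\sim_{a,k}$ is compatible with the homotheties $x\mapsto nx$, the only extra 1-freezifications. Let $x\sim_{a,k}y$ with $x\neq y$, so $a\le x$, $a\le y$ and $k\mid x-y$. If $n=0$, both images are $0$. If $n\ge1$, then $nx\ge x\ge a$ and $ny\ge a$, and $k$ divides $n(x-y)=nx-ny$. Thus $nx\sim_{a,k}ny$. This short computation is the only genuinely arithmetic step, and the boundary case $n=0$ is where care is needed. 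This establishes that the three signatures have the same congruences.

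Congruence preservation of $f\colon\N\to\N$ is defined purely as compatibility with all congruences (Definition~\ref{def:congCompat 1}). Since the three families of congruences coincide, the three notions of congruence preserving function coincide immediately. For recognizability, Lemma~\ref{l:rec and congru} 2) states that $L$ is recognizable iff $L$ is saturated for some finite index congruence. This criterion refers only to the set of congruences, so it gives the same recognizable subsets for all three structures. Concretely, these are the sets saturated for some $\sim_{a,k}$, i.e.\ the ultimately periodic sets.
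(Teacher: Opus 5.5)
Your proof is correct and follows the paper's argument: the $+$-congruences are equality and the $\sim_{a,k}$ by Lemma~\ref{folk0}, each $\sim_{a,k}$ is stable under multiplication, and the claims about congruence preservation and recognizability then follow directly (the latter via Lemma~\ref{l:rec and congru}). Your reduction to homotheties through Lemma~\ref{lem:de n a 1}, including the $n=0$ case, makes explicit the multiplicative stability that the paper only calls a straightforward property of modular congruences.
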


\begin{proof}
 Every congruence for $\langle \N;+,\times\rangle$ is a fortiori
a congruence for $\langle \N;+\rangle$.
Conversely, observe that the $\sim_{a,k}$'s are stable under multiplication, 
a straightforward property of modular congruences. 
Using Lemma~\ref{folk0}, this shows that every $+$-congruence is also a $\times$-congruence. 
The assertion about congruence preservation is a trivial consequence.
For the one about recognizability, use Lemma~\ref{l:rec and congru}.
\end{proof}

% and also Lemma~\ref{+xmorphisms}.
%
\begin{remark}\label{r:morphimes+x} For morphisms  the situation is more complex:
homotheties $x\mapsto kx$ for $k\geq 2$ are +-morphisms which are not $\times$-morphism
as $k(x\times y)\neq kx\times ky$.
%1) {\sege REFORMULATION\seg Though the structures $\langle  \N ;+\rangle$and $\langle  \N ;+,\times\rangle$ admit the same morphisms $\N\to\N$,if we consider maps from $\N$ into a ring (or a semiring, cf.\! Definition~\ref{def:semiring}),this is no more the case.
%For instance,the morphism} $\varphi\colon\langle  \N ;+\rangle\to \langle \Z/5\Z;+\rangle$defined by: $\varphi(1) =2$ is not a $\langle  \N ;+,\times\rangle\to \langle \Z/5\Z;+,\times\rangle$-morphism since $\varphi(2\times 3)=\varphi(6)=(12\pmod 5) =2\neq \varphi(2)\times \varphi(3)=4\times 6\pmod 5=4$.
\end{remark}
%%%%%%%%%%%%%%%
\subsection{The ``frying pan''  monoids and semirings}\label{ss FryingPan}
%%%%%%%%%%%%%%%%
We define canonical representations of the quotient monoids and semirings
$\N/\!\!\sim_{a,k}$.

 \begin{definition}\label{def:finite monogenic monoids}
Let $a,k\in\N$ such that $k\geq1$.

1) We denote by $M_{a,k}=\{0,\ldots,a+k-1\}$ the set of minimum representatives 
of the equivalence classes of $\sim_{a,k}$
and by $\varphi_{a,k}:\N \to M_{a,k}$ the map such that 
$$
\varphi_{a,k}(x)= \textit{IF } x<a \textit{ THEN } x \textit{ ELSE } a+((x-a)\pmod k)
$$
which can be identified with the canonical surjection $\N\to\N/\!\!\sim_{a,k}$. 

2) To any $n$-ary operation $\xi:\N^n\to\N$ on $\N$ corresponds a unique operation
$\xi_{a,k} : M_{a,k}^n\to M_{a,k}$  making $\varphi_{a,k}$ a morphism 
$\langle\N;\xi\rangle \to \langle M_{a,k};\xi_{a,k}\rangle$;  it is
defined by $\xi_{a,k} (x_1,\ldots,x_n)= \varphi_{a,k}\big(\xi (x_1,\ldots,x_n)\big)$.
In this way, we shall consider the arithmetic operations 
$\Suc_{a,k}$ , $+_{a,k}$ and $\times_{a,k}$ on $M_{a,k}$. 
\end{definition}

\begin{definition}
A monoid $\langle M;\oplus\rangle$ with unit $0$ is here said to be monogenic if there exists $g\in M$
such that every element of $M\setminus\{0\}$ is a sum of some nonempty finite set
of copies of $g$.
Such an element $g$ is called a {\em generator}.
\end{definition}

\begin{figure}[h]
\[
\scalebox{.60}{ 
\xymatrix{
&&&&&&{a+2}\ar@/^/[dr]&&\\
&&&&&{a+1}\ar@/^/[ru]&&{a+3}\ar@/^/[rd]&\\
\qquad        0\ar  [r] &   1\ar  [r]& 2\ \ldots\!\!\!\!\!\!\!\!\!\!\!\!
&   {a-1}\ar  [r]&   {a}\ar@/^/[ru]&&&&{a+4}\ar@/^/[dl]\\
&&&&&{a+7}\ar @/^/ [lu]&&{a+5}\ar @/^/[ld]&\\
&&&&&&{a+6}\ar @/^/[lu]&&\\
}
}
\]
\caption{``Frying pan'' monoid $M_{a,k}$,  $k=8$, where  $Suc_{a,k}$  is represented by  arrow.
}\label{fig:frying pan}
\end{figure}
\begin{lemma}\label{l:finite monogenic monoids}
1) $\langle M_{a,k};+_{a,k}\rangle$ is a monogenic commutative monoid 
(called ``frying pan'' monoid, cf.\! Figure~\ref{fig:frying pan}) with $0$ as unit.

2) Every finite monogenic monoid $\langle M;\oplus\rangle$ is isomorphic to
the monoid $\langle M_{a,k};+_{a,k}\rangle$ for some $a,k$.

3) For every surjective morphism $\psi : \langle \N;+\rangle \to \langle M;\oplus\rangle$
onto a finite monoid $\langle M;\oplus\rangle$, there exist $a,k$
and an isomorphism $\theta : \langle M_{a,k};+_{a,k}\rangle \to \langle M;\oplus\rangle$
such that $\psi = \theta \circ \varphi_{a,k}$.
\end{lemma}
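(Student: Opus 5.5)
The plan is to prove the three items in order, relying throughout on Lemma~\ref{folk0}, which classifies the congruences of $\langle\N;+\rangle$.

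\emph{Item 1).} Commutativity, associativity and the unit $0$ transfer from $\langle\N;+\rangle$ to $\langle M_{a,k};+_{a,k}\rangle$. The reason is that $\varphi_{a,k}$ is a surjective morphism whose kernel $\sim_{a,k}$ is a congruence, so $M_{a,k}$ is just a transversal of $\N/\!\sim_{a,k}$ equipped with the quotient structure. Concretely, $x+_{a,k}y=\varphi_{a,k}(x+y)$ is well defined on classes, so every identity valid in $\N$ holds in $M_{a,k}$. For monogenicity, take $g=\varphi_{a,k}(1)$ (equal to $1$ if $a+k\geq 2$, and to $0$ when $a=0,k=1$, the trivial monoid). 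Every $m\in M_{a,k}\setminus\{0\}$ satisfies $m=\varphi_{a,k}(m)=\varphi_{a,k}(1+\cdots+1)$, which is the sum of $m$ copies of $g$.

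\emph{Item 3).} I prove this before item 2). Let $\psi:\langle\N;+\rangle\to\langle M;\oplus\rangle$ be surjective with $M$ finite. Its kernel $\ker\psi=\{(x,y)\mid\psi(x)=\psi(y)\}$ is a congruence of $\langle\N;+\rangle$ of finite index $|M|$. It cannot be equality, since that has infinite index. So by Lemma~\ref{folk0}, $\ker\psi=\sim_{a,k}$ for some $a$ and some $k\geq1$. Both $\psi$ and $\varphi_{a,k}$ are surjective with the same kernel, so the first isomorphism theorem yields a bijection $\theta:M_{a,k}\to M$ with $\theta(\varphi_{a,k}(x))=\psi(x)$. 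Explicitly, $\theta(m)=\psi(m)$, because $\varphi_{a,k}(m)=m$ for $m\in M_{a,k}$. The morphism property follows from
\[
\theta(x+_{a,k}y)=\psi(\varphi_{a,k}(x+y))=\psi(x+y)=\psi(x)\oplus\psi(y).
\]
Here I use that $\varphi_{a,k}(x+y)\sim_{a,k}x+y$, so $\psi$ takes the same value on both.

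\emph{Item 2).} Let $\langle M;\oplus\rangle$ be finite and monogenic with generator $g$. Define $\psi:\N\to M$ by $\psi(n)=ng$ (the sum of $n$ copies of $g$), with $\psi(0)=0$. This is a monoid morphism, and it is surjective by the definition of monogenic. Item 3) then gives the required isomorphism $\theta:\langle M_{a,k};+_{a,k}\rangle\to\langle M;\oplus\rangle$.

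The only step needing care is in item 3): checking that the kernel is a genuine $+$-congruence and that it has finite index. Both are immediate from $\psi$ being a morphism onto a finite set. The remaining delicate points are degenerate cases, namely $M$ trivial (take $a=0$, $k=1$) and the identification of $\varphi_{a,k}$ with the canonical surjection, both of which I take from Definition~\ref{def:finite monogenic monoids}.
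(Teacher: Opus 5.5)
Your proof is correct and is essentially the classical argument the paper relies on (it omits the proof, citing Lemma~\ref{folk0}): the kernel of $\psi$, or of $n\mapsto ng$, is a finite-index congruence of $\langle\N;+\rangle$, hence equals some $\sim_{a,k}$, and $\theta$ sends $x\in M_{a,k}$ to the sum of $x$ copies of the generator. The only difference is the order of the items: the paper's sketch proves 2) first and obtains 3) by observing that $\psi(1)$ generates $M$, whereas you prove 3) first via the first isomorphism theorem and derive 2) from the surjection $n\mapsto ng$, which changes nothing of substance.
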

Follows from Lemma \ref {folk0}.The proof is classical and omitted.

\if 34
\begin{proof}
We recall the argument of the classical proof of 2).
Let $g$ be a generator of $M$.
Consider the relation on $\N$ such that $\ell\equiv n$ if
the sums in $\langle M;\oplus\rangle$ 
of $\ell$ copies of $g$ and that of $n$ copies of $g$ are equal.
This relation $\equiv$ is a congruence on $\langle\N;+\rangle$ and it has finite index
since $M$ is finite.
Thus, it is equal to $\sim_{a,k}$ for some $a,k$.
The wanted isomorphism $\theta : \langle M_{a,k};+_{a,k}\rangle \to \langle M;\oplus\rangle$
maps $x\in M_{a,k}$ onto the sum in $M$ of $x$ copies of $g$.
To get 3) observe   that $\langle M;\oplus\rangle$ is necessarily monogenic as  $\langle\N;+\rangle$ is monogenic, and that the image $g=\psi(1)$ of the generator $1$ of $\langle\N;+\rangle$
is a generator of $\langle M;\oplus\rangle$,  then use the above isomorphism $\theta$.
\end{proof}
\fi
\begin{definition}\label{def:semiring}
A {\em semiring} is a set $R$ together with two binary operations $\oplus$ and $\otimes$ such that
\\- $\langle R, \oplus\rangle$ is a commutative monoid with an identity element, say $0$,
\\- $\langle R, \otimes\rangle$ is a monoid with an identity element,
\\- Multiplication by 0 annihilates $R$: $0\otimes a = a\otimes 0 = 0$ for all $a$,
\\- Multiplication left and right distributes over addition: 
$a\otimes(b\oplus c)=(a\otimes b)\oplus(a\otimes c)$
and $(a\oplus b)\otimes c=(a\otimes c)\oplus(b\otimes c)$ for all $a,b,c$.
\end{definition}

We use the arithmetic operations defined on $M_{a,k}$, 
cf.\! Definition \ref{def:finite monogenic monoids}.
\begin{example}\label{l:varphi ak morphism semiring}   
The algebra $\langle M_{a,k};+_{a,k},\times_{a,k}\rangle$ is a semiring,
called the ``$({a,k})$ frying pan semiring'',
and $\varphi_{a,k}$ is a morphism 
$\langle\N;+,\times\rangle \to \langle M_{a,k};+_{a,k},\times_{a,k}\rangle$. 
\end{example}
%
%%%%%%%%%%%%%%%
\subsection{Congruence preservation and divisibility}\label{ss:CPD}
%%%%%%%%%%%%%%%%%%%
Congruence preservation on $\langle\N;+\rangle$ can be characterized as follows
\begin{theorem}\label{thm:CPsurN}
For a map $f:\N\to\N$, the following conditions are equivalent:
\begin{enumerate}
\item
$f:\N\to\N$ is congruence preserving on the algebra  $\langle\N;+\rangle$,
\item
$\left\{
   \text{\begin{tabular}{l}
(i) \qquad\ $(x-y)$ divides $(f(x)-f(y))$ for all $x,y\in\N$, and \label{CPa}\\
(ii) \quad\ \  either $f$ is constant or $f(x)\geq x$ for all $x$. \label{CPb}
\end{tabular}}
\right.$
\end{enumerate}
\end{theorem}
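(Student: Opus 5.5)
The proof rests on Lemma~\ref{folk0}: the congruences of $\langle\N;+\rangle$ are equality and the $\sim_{a,k}$ ($k\geq1$). So congruence preservation of $f$ means exactly this: for all $a,k$ and all $x,y$, if $x\sim_{a,k}y$ then $f(x)\sim_{a,k}f(y)$. Equality is preserved by every map. I will prove the two implications separately, each by choosing a suitable $\sim_{a,k}$.

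\emph{(1)$\Rightarrow$(2), part (i).} Let $x<y$ and $k=y-x$. Then $x\sim_{x,k}y$, since both are $\geq x$ and $x\equiv y \pmod k$. Hence $f(x)\sim_{x,k}f(y)$. This means either $f(x)=f(y)$, or $f(x)\equiv f(y)\pmod k$. In both cases $k=y-x$ divides $f(y)-f(x)$. The case $x=y$ is trivial with the convention that $0$ divides $0$, and the case $x>y$ is symmetric.

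\emph{(1)$\Rightarrow$(2), part (ii).} This is the step needing a small trick. Assume $f(x_0)<x_0$ for some $x_0$; I will show that $f$ is constant. For any $m\geq1$ we have $x_0\sim_{x_0,m}x_0+m$, hence $f(x_0)\sim_{x_0,m}f(x_0+m)$. Since $f(x_0)<x_0$, the class of $f(x_0)$ under $\sim_{x_0,m}$ is a singleton, so $f(x_0+m)=f(x_0)=:c$. Thus $f$ equals $c$ on $[x_0,\infty)$. For $z<x_0$, part (i) gives $(y-z)\mid(c-f(z))$ for every $y\geq x_0$. Letting $y$ grow, the divisor exceeds $|c-f(z)|$, which forces $f(z)=c$. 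So $f$ is constant.

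\emph{(2)$\Rightarrow$(1).} A constant map preserves every relation that is reflexive, so assume $f(x)\geq x$ for all $x$ together with (i). Let $x\sim_{a,k}y$ with $x\neq y$. Then $x,y\geq a$ and $k\mid x-y$. By (i), $x-y$ divides $f(x)-f(y)$, so $k$ divides it as well. Moreover $f(x)\geq x\geq a$ and $f(y)\geq y\geq a$. Therefore $f(x)\sim_{a,k}f(y)$, and $f$ preserves every congruence.

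The only non-routine point is the constancy argument in part (ii). It combines the singleton classes of $\sim_{x_0,m}$ below the threshold $x_0$ with divisibility by arbitrarily large integers. Everything else is a direct unwinding of the description of the congruences given in Lemma~\ref{folk0}.
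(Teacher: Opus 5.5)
Your proof is correct and follows the paper's argument step for step. The only differences are cosmetic. For (i) you use $\sim_{x,y-x}$ where the paper uses the plain congruence modulo $y-x$. For (ii) you use the family $\sim_{x_0,m}$, $m\geq1$, where the paper uses the single congruence $\sim_{a,1}$, which already relates $a$ to every $y\geq a$.
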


\begin{proof}
$(1)\Rightarrow(2)$.
Suppose $f$ is congruence preserving. 
Let $x<y$ and consider the congruence modulo $y-x$.
As $y\equiv x\bmod y-x$ we have $f(y)\equiv f(x)\bmod y-x$ hence $y-x$ divides $f(y)-f(x)$.
This proves (i).
To prove (ii), we show that if condition $f(x)\geq x$ is not satisfied then $f$ is constant.
Let $a$ be least such that $f(a)<a$.
 Consider 
the frying pan $M_{a,1}$ (depicted in Figure~\ref{Ma,1}) 
and the congruence $\sim_{a,1}$. 
\begin{figure}[h]
\[\xymatrix{
0\ar[r]&1\ar[r]&\ldots &{a-1}\ar[r]&a\ar@(ul,ur)
}\]
\caption{$M_{a,1}$}\label{Ma,1}
\end{figure}
We have $a\sim_{a,1} y$ for all $y\geq a$ hence $f(a)\sim_{a,1}f(y)$. 
As $f(a)<a$ this implies $f(a)=f(y)$.
Thus, $f(a)=f(y)$ for all $y\geq a$.
Let $z<a$. By condition (i) (already proved) we know that $p$ divides $f(z)-f(z+p)$ for all $p$.
Now, $f(z+p)=f(a)$ when $z+p\geq a$. Thus, $f(z)-f(a)$ is divisible by all $p\geq a-z$.
This shows that $f(z)=f(a)$.
Summing up, we have proved that $f$ is constant with value $f(a)$.

$(2)\Rightarrow(1)$.
Constant functions are trivially congruence preserving. 
We thus assume that $f$ is not constant, hence $f$ satisfies condition (i) and $f(x)\geq x$ for all $x$.
Consider a congruence $\sim$ and suppose $x\sim y$. 
If $\sim$ is the identity relation then $x=y$ hence $f(x)=f(y)$.
Else, by Lemma~\ref{folk0} the congruence $\sim$ is $\sim_{a,k}$ with $a\in\N$ and $k\geq1$.
In case $y<a$ then condition $x\sim_{a,k} y$ implies $x=y$ hence $f(x)=f(y)$ and $f(x)\sim f(y)$.
In case $y\geq a$ then condition $x\sim_{a,k} y$ implies $x\geq a$ and $x\equiv y\bmod k$,
hence $k$ divides $y-x$.
Condition (i) ensures that $y-x$ divides $f(y)-f(x)$ hence $k$ also divides $f(y)-f(x)$.
Also, our hypothesis yields $f(x)\geq x$ and $f(y)\geq y$.
As $x,y\geq a$ we get $f(x),f(y)\geq a$. Since $k$ divides $f(y)-f(x)$ we conclude that 
$f(x)\sim_{a,k} f(y)$, whence (1).
\end{proof}
\begin{remark}\label{exCPnonsurlineaire} 
 We cannot withdraw the over-linearity condition $f(x)\geq x$ in Theorem \ref{thm:CPsurN}. We proved in {\rm\cite{cgg15}} that any function $f\colon\Z/n\Z\to
\Z/n\Z$ satisfying (2)$(i)$ can be lifted to a function $F\colon\N\to\N$  such that $F$ satisfies (2)$(i)$ and for $x\leq n-1$, $F(x)=f(x)$. Consider the function $f\colon\Z/3\Z\to\Z/3\Z$  such that $f(0) = f(1)=0<1$, $f(2)=2$;  $F$ is not constant, satisfies (2)$(i)$ but is not congruence preserving as $F(1)<1$  (using Theorem \ref{thm:CPsurN}). In fact we can  directly see that 
$F$ is not congruence preserving using the congruence $\sim_{1,1}$: indeed $1\sim_{1,1}2$
but $F(1)\not\sim_{1,1}F(2)$.
\end{remark}

%
%Using Lemma \ref{lem:de n a 1}, we  deduce the following corollary  from Theorem \ref{thm:CPsurN}.
%
%\begin{corollary}
%A function $f\colon \N^n\longrightarrow \N$ is congruence preserving if and only if condition (2) of Theorem \ref{thm:CPsurN}
%holds for all the unary frozen functions $f^{\vec{a}}_i$,  $1\leq i\leq n$ and $\vec{a}\in \N^{n-1}$.
%\end{corollary}
%
%%%%%%%%%%%%%%%
\subsection{Congruence/(pre)orders preservation and monotonicity}\label{ss cong preorder preservation and monotony}
%%%%%%%%%%%%%%%%%%%{lcm}

Theorem~\ref{thm:CPsurN} may induce the hope that  congruence preserving functions   over $\langle\N;+\rangle$ are  monotone, but this is not the case, cf. Proposition \ref{p:cp non monotone}. This does not hold as shown by the following Proposition
\begin{proposition}\label{p:cp non monotone}
 There exists a non  monotone function $g:\N\to\N$
which is $\langle\N;+\rangle$-congruence preserving.
\end{proposition}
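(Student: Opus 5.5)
The plan is to reduce everything to Theorem~\ref{thm:CPsurN}. That theorem characterizes $\langle\N;+\rangle$-congruence preserving maps as exactly those $g$ that satisfy (i), the divisibility condition $(x-y)\mid(g(x)-g(y))$, and (ii), either being constant or satisfying $g(x)\geq x$ for all $x$. So it suffices to exhibit a non-constant, non-monotone $g:\N\to\N$ satisfying both conditions.

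For (i) I would take $g$ to be a polynomial with integer coefficients. For any $P\in\Z[x]$ and integers $x,y$, $x-y$ divides $P(x)-P(y)$, since $x^n-y^n=(x-y)(x^{n-1}+\cdots+y^{n-1})$, so divisibility is automatic. Non-monotonicity is then easy to arrange with a parabola whose vertex lies strictly inside $\N$. The only real constraint is (ii): the parabola must stay above the diagonal $y=x$, including near the vertex, and must take values in $\N$.

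My concrete choice is
\[
g(x)=2(x-2)^2+2 .
\]
It takes nonnegative integer values, and $g(0)=10>g(1)=4$, so $g$ is neither non-decreasing nor constant.

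The main check is over-linearity. The inequality $g(x)\geq x$ is equivalent to $2x^2-9x+10\geq 0$, that is, $(x-2)(2x-5)\geq 0$. This fails only for real $x$ strictly between $2$ and $5/2$, an interval that contains no integer, so $g(x)\geq x$ holds on all of $\N$. The tight case is $x=2$, where $g(2)=2$, and it is worth checking the small values directly: $g(0)=10$, $g(1)=4$, $g(2)=2$, $g(3)=4$, $g(4)=10$.

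Finally, invoking the implication $(2)\Rightarrow(1)$ of Theorem~\ref{thm:CPsurN} yields that $g$ is $\langle\N;+\rangle$-congruence preserving, which proves the proposition.
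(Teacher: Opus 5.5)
Your proof is correct and is essentially the paper's argument. The paper uses the same kind of parabola, $g(x)=(x-2)^2+2$ (for which $g(x)-x=(x-2)(x-3)\geq 0$ on $\N$), and simply asserts that it preserves the modular congruences and the $\sim_{a,k}$'s; your appeal to $(2)\Rightarrow(1)$ of Theorem~\ref{thm:CPsurN} spells this out, and your factor $2$ and the check $(x-2)(2x-5)\geq 0$ make explicit the over-linearity the paper leaves implicit.
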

\begin{proof}
Let $g(x)=(x-2)^2+2$ : $g$ preserves modular congruences, and also all $\sim_{a,k}$ congruences, but $g$ is non monotone.
\end{proof}

%%%%%%%%%%%%%%%
%\subsection{Congruence preservation versus stable (pre)order preservation}
%%%%%%%%%%%%%%%%
\if 32
The existence of a stable total order  has a nice consequence.

\begin{proposition}\label{p:stable total order}
Let $\+A = \langle A;\Xi\rangle$ be an algebra.
Assume there is a total order on $A$ which is $\+A$-stable.
Then a function $f:A\to A$ is $\+A$-stable preorder preserving 
if and only if it is $\+A$-stable order preserving.
\end{proposition}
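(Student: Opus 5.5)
One direction is immediate. Every stable order is in particular a stable preorder, so a function preserving all stable preorders preserves all stable orders. The work is in the converse: assume $f$ preserves every $\+A$-stable order, and let $\preceq$ be an arbitrary $\+A$-stable preorder with $x\preceq y$. We must show $f(x)\preceq f(y)$.

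The plan is to refine $\preceq$ into a stable order by breaking ties with the given stable total order $\le$. Define
\[
u\sqsubseteq v \iff \bigl(u\preceq v \wedge v\not\preceq u\bigr)\ \vee\ \bigl(u\sim v \wedge u\le v\bigr),
\]
where $\sim$ is the congruence associated with $\preceq$ (that is, $u\sim v$ iff $u\preceq v$ and $v\preceq u$). This is a lexicographic combination: first compare by $\preceq$, and inside a $\sim$-class compare by $\le$.

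The key steps, in order, are as follows.

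\textbf{(a) $\sqsubseteq$ is an order.} Reflexivity and antisymmetry are clear, since $\le$ is antisymmetric. Transitivity follows by a routine case split on strict versus tied comparisons.

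\textbf{(b) $\sqsubseteq$ is $\+A$-stable.} By Lemma~\ref{lem:de n a 1} it suffices to treat unary $1$-freezifications $\gamma$. Suppose $u\sqsubseteq v$. If $u\sim v$ and $u\le v$, then $\gamma(u)\sim\gamma(v)$ because $\sim$ is a congruence, and $\gamma(u)\le\gamma(v)$ because $\le$ is stable; hence $\gamma(u)\sqsubseteq\gamma(v)$. If instead $u\preceq v$ strictly, then $\gamma(u)\preceq\gamma(v)$, but the strict inequality may collapse into a tie $\gamma(u)\sim\gamma(v)$ whose $\le$-comparison goes the wrong way.

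\textbf{(c) Conclude.} If (b) holds, then $x\preceq y$ gives either $x\sqsubseteq y$ or $y\sqsubseteq x$ (the latter only when $x\sim y$). In the first case, order preservation yields $f(x)\sqsubseteq f(y)$, hence $f(x)\preceq f(y)$. In the tie case we get $f(y)\sqsubseteq f(x)$, so $f(y)\preceq f(x)$. We also need $f(x)\preceq f(y)$ there, and I would obtain this from $y\sqsubseteq' x$ for the symmetric refinement $\sqsubseteq'$ that uses $\ge$ instead of $\le$; note that $\ge$ is also a stable total order.

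The main obstacle is step (b), namely that a strict comparison can collapse into a tie. I expect the lexicographic refinement to fail in general, so I would switch to a more robust device: representing $\preceq$ as an intersection of stable orders. The candidate is, for each $\+A$-stable preorder $\preceq$, the two relations
\[
{\preceq}\cap{\le} \quad\text{and}\quad {\preceq}\cap{\ge}.
\]
Each is stable, being an intersection of stable relations, and each is antisymmetric and transitive, hence an order. Their union is all of $\preceq$, since $\le$ is total. However, what is actually needed is that a single pair $x\preceq y$ lies in one of these orders and that this forces $f(x)\preceq f(y)$. This is exactly what we get: if $x\le y$, then $x\,({\preceq}\cap{\le})\,y$, so $f(x)\,({\preceq}\cap{\le})\,f(y)$, hence $f(x)\preceq f(y)$; the case $y\le x$ is symmetric via ${\preceq}\cap{\ge}$.

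This second argument avoids step (b) entirely. It is the route I would actually write, and the only verification left is that an intersection of a stable preorder with a stable total order is a stable order.
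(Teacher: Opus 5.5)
Your final argument is correct and is essentially the paper's own proof. The paper also observes that ${\preceq}\cap{\le}$ and ${\preceq}\cap{\ge}$ are stable orders: each is an intersection of stable relations, and each is antisymmetric because $\le$ is. It then uses totality of $\le$ to place any pair $x\preceq y$ in one of them. You can simply drop the lexicographic detour; you were right that it can fail, because a strict $\preceq$-comparison that runs against $\le$ may collapse into a tie after a freezification is applied.
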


\begin{proof}
One implication is trivial. We show that if $f$ preserves all $\+A$-stable orders
then it also preserves all $\+A$-stable preorders.
Let $\leq$ be a total $\+A$-stable order on $A$ and $\preceq$ be an $\+A$-stable preorder on $A$.
Then ${\leq}\cap{\preceq}$ and ${\geq}\cap{\preceq}$ are $\+A$-stable orders on $A$.
Indeed, as reflexivity and transitivity go through intersection,
the intersection of two preorders is a preorder. Also, the antisymmetry property of an order
(namely, $x\leq y\leq x$ implies $x=y$) still holds for the intersection with any relation.
Finally, the intersection of two $\+A$-stable relations is $\+A$-stable.

Suppose now that $x\preceq y$. As $\leq$ is total, either $x\leq y$ or $y\leq x$.
Assume $x\leq y$. As $f$ preserves the $\+A$-stable order ${\leq}\cap{\preceq}$
and $x({\leq}\cap{\preceq}) y$ we have $f(x)({\leq}\cap{\preceq}) f(y)$ and a fortiori
$f(x)\preceq f(y)$.
Same argument if $y\leq x$ using the order ${\geq}\cap{\preceq}$.
\end{proof}
Å
%\begin{corollary}\label{rk:stable total order}
Considering the usual total order, 
the above result holds for  $\langle\RR;+\rangle$,  $\langle\N;+\rangle$,  $\langle\N;+,\times\rangle$. 
%It also applies to products of these algebras
%(consider the stable lexicographic product of the usual order).
%\end{corollary}
\fi

We now show what monotonicity adds to congruence preservation in $\langle\N;+\rangle$ and $\langle\N;+,\times\rangle$.
First, a simple observation.

\begin{dlemma}\label{l:cp and spp on N}
Let $\preceq$ be a stable preorder on $\langle\N;+\rangle$ and, 
for $a\in\N$, let $M^+_a=\{x \mid a\preceq a+x\}$ and $M^-_a=\{x \mid a+x\preceq a\}$.

1) $M^+_a$ and $M^-_a$ are submonoids of $\langle\N;+\rangle$.

2) If $a\leq b$ then $M^+_a \subseteq M^+_b$ and $M^-_a \subseteq M^-_b$.
\end{dlemma}

\begin{proof}
1) Clearly, $0\in M^+_a$. 
Suppose $a\preceq a+x$ and $a\preceq a+y$.
By stability the first inequality yields $a+y\preceq a+x+y$ and, by transitivity,
the second inequality gives $a\preceq a+x+y$. Thus, $M^+_a$ is a submonoid.
Idem with $M^-_a$.

2) If $a\preceq a+x$ then by stability $a+(b-a)\preceq a+x+(b-a)$, i.e., $b\preceq b+x$.
Thus, $M^+_a \subseteq M^+_b$. Similarly, $M^-_a \subseteq M^-_b$. 
\end{proof}

\begin{theorem}\label{prop:miracle}
Relative to the algebras $\langle\N;+\rangle$ and $\langle\N;+,\times\rangle$,
a function $f:\N\to\N$ is stable preorder preserving if and only if it is monotone non decreasing
and congruence preserving.
%{\color{blue} EST-CE VRAI DANS UN CADRE GENERAL D'ALGEBRE AVEC QUELQUE CONDITION ?}
\end{theorem}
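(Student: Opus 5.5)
We prove the two directions separately, working with $\langle\N;+\rangle$ first. By Corollary~\ref{+CPimpliesXCPN}, congruences coincide for the two signatures. We also need stable preorders to coincide: a $+$-stable preorder is automatically $\times$-stable. Indeed, if $x\preceq y$ then applying stability under $+$ repeatedly gives $nx = x+\cdots+x \preceq y+\cdots+y = ny$, and the case $n=0$ is reflexivity. Combined with Lemma~\ref{lem:de n a 1}, this shows that compatibility with $\times$ reduces to compatibility with the homotheties, which we just checked. So it suffices to treat $\langle\N;+\rangle$.

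($\Rightarrow$) Assume $f$ preserves stable preorders. Congruences are stable preorders, so $f$ is congruence preserving. The usual order $\leq$ on $\N$ is a stable preorder for $+$, since $x\le y$ implies $x+c\le y+c$. Hence $x\le y$ implies $f(x)\le f(y)$, i.e.\ $f$ is non decreasing.

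($\Leftarrow$) Assume $f$ is non decreasing and congruence preserving. Constant $f$ is trivial, so by Theorem~\ref{thm:CPsurN} we may assume $(x-y)\mid(f(x)-f(y))$ and $f(x)\ge x$ for all $x$. Let $\preceq$ be a stable preorder and $x\preceq y$; we must show $f(x)\preceq f(y)$. If $x=y$ this holds by reflexivity. Otherwise, say $y=x+d$ with $d>0$, so $d\in M^+_x$ (Definition-Lemma~\ref{l:cp and spp on N}); the case $x=y+d$ is symmetric, using $M^-_y$. Put $e=f(y)-f(x)$. Then $e\ge 0$ by monotonicity, and $e=qd$ for some $q\in\N$ by divisibility. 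The goal is $f(x)\preceq f(x)+e$, i.e.\ $qd\in M^+_{f(x)}$. Since $M^+_x$ is a submonoid containing $d$, we get $qd\in M^+_x$. Since $f(x)\ge x$, part 2 of Definition-Lemma~\ref{l:cp and spp on N} gives $M^+_x\subseteq M^+_{f(x)}$, so $qd\in M^+_{f(x)}$, as required. In the symmetric case, $d\in M^-_y$, and $f(x)-f(y)=qd\ge0$, so $f(x)=f(y)+qd$. Then $qd\in M^-_y\subseteq M^-_{f(y)}$ because $f(y)\ge y$, which gives $f(y)+qd\preceq f(y)$, i.e.\ $f(x)\preceq f(y)$.

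The main obstacle is this second direction. Three ingredients have to fit together: monotonicity gives the correct sign of $f(y)-f(x)$, so that it lies in the same monoid ($M^+$ or $M^-$) as $d$; divisibility makes it a multiple of $d$; and the over-linearity $f(x)\ge x$ from Theorem~\ref{thm:CPsurN} lets us transport membership from $M^\pm_x$ to $M^\pm_{f(x)}$. Note that the constant case must be split off, since constant functions need not satisfy $f(x)\ge x$.
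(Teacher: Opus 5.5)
Your proof is correct and follows the paper's argument. The forward direction uses the stability of the usual order. For the converse, the sign from monotonicity, the divisibility and over-linearity from Theorem~\ref{thm:CPsurN}, and the submonoid and inclusion properties of $M^\pm_a$ from Definition-Lemma~\ref{l:cp and spp on N} give $f(x)\preceq f(y)$ in both the $M^+$ and $M^-$ cases, and your explicit check (via homotheties and Lemma~\ref{lem:de n a 1}) that every $+$-stable preorder is $\times$-stable cleanly supplies the reduction from $\langle\N;+,\times\rangle$ to $\langle\N;+\rangle$ that the paper leaves implicit.
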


\begin{proof}
%begin{center}
%  \begin{tabular}{@{} ccc @{}}
  %  \hline
 %   ¥ & ¥ & ¥ \\ 
 %   \hline
 %   ¥ & ¥ & ¥ \\ 
 %   ¥ & ¥ & ¥ \\ 
%    ¥ & ¥ & ¥ \\ 
%    \hline
%  \end{tabular}
%\end{center}
If $f$ is stable preorder preserving then it is a fortiori congruence preserving.
As the usual order $\leq$ on $\N$ is stable, it is preserved by $f$ hence $f$ is monotone non decreasing.\\
Conversely,
assume $f$ is congruence preserving and monotone nondecreasing.
We prove that $f$ preserves stable preorders.
The case where $f$ is constant is trivial. We now suppose $f$ is not constant hence
$f$ satisfies conditions 2) (i) and (ii) in  Theorem \ref{thm:CPsurN}.
Since the usual order on $\N$ is stable, using Lemma~\ref{l:cp and spp on N} it suffices
to show that $f$ preserves every stable order $\preceq$. 
%Let $\sim$ be the congruence associated with $\preceq$.
 Suppose $a\preceq b$, then
\begin{itemize}\setlength{\itemsep}{0pt}
\item either  $a\leq b$ hence  $b-a\in M^+_a=\{x \mid a\preceq a+x\}$.
By condition (i) we know that $b-a$ divides $f(b)-f(a)$. 
As $M^+_a$ is a monoid, $f(b)-f(a)$ is also in $M^+_a$.
Condition (ii) ensures that $f(a)\geq a$ hence,  by Lemma~\ref{l:cp and spp on N},
$M^+_a \subseteq M^+_{f(a)}$. Thus, $f(b)-f(a)$ is in $M^+_{f(a)}$
implying $f(a)\preceq f(b)$. 
\item or $b\leq a$, the proof is similar, by noting that $a-b\in
M^-_b=\{x \mid b+x\preceq b\}$.\qedhere
\end{itemize}
\end{proof}
%
%%%%%%%%%%
\subsection{Recognizable subsets of $\langle\N;+\rangle$ and  $\langle\N;+,\times\rangle$}\label{ss rec in N+}
%%%%%%%%%%%%%%
 We recall the classical characterization of recognizability in $\langle\N;+\rangle$ and $\langle\N;+,\times\rangle$.
\begin{proposition}\label{p:folk N rec}
Let $L$ be a subset of $\N$. The following conditions are equivalent:
\begin{enumerate}\setlength{\itemsep}{0pt}
\item
$L$ is $\langle\N;+\rangle$-recognizable.
\item
$L$ is $\langle\N;+,\times\rangle$-recognizable.
\item
$L$ is of the form $L=F\cup(R+k\N)$ with
$1\leq k$, $F\subseteq\{x\mid 0\leq x<a\}$,  and $R\subseteq\{x\mid a\leq x<a+k\}$
(possibly empty in which case $L$ is finite).
\end{enumerate}
\end{proposition}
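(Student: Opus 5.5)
The plan is to prove the cycle $(1)\Leftrightarrow(2)$ and $(1)\Leftrightarrow(3)$, using Lemma~\ref{l:rec and congru}, which says recognizability means being saturated for some finite index congruence.

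For $(1)\Leftrightarrow(2)$ I would invoke Corollary~\ref{+CPimpliesXCPN}. The structures $\langle\N;+\rangle$ and $\langle\N;+,\times\rangle$ have exactly the same congruences, namely equality and the $\sim_{a,k}$. Hence they have the same finite index congruences, and Lemma~\ref{l:rec and congru} then gives the same recognizable sets.

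For $(1)\Rightarrow(3)$, let $L$ be recognizable. By Lemma~\ref{l:rec and congru}, $L$ is saturated for a finite index congruence $\sim$. Equality on $\N$ has infinite index, so by Lemma~\ref{folk0} we have ${\sim}={\sim_{a,k}}$ with $k\geq1$. The $\sim_{a,k}$-classes are the singletons $\{x\}$ for $x<a$ and the sets $r+k\N$ for $a\leq r<a+k$. A saturated set is a union of classes, so put $F=L\cap\{x\mid x<a\}$ and $R=L\cap\{x\mid a\leq x<a+k\}$. Then $L=F\cup(R+k\N)$, because any $y\geq a$ lies in $r+k\N$ for the unique representative $r=\varphi_{a,k}(y)\in\{a,\ldots,a+k-1\}$.

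For $(3)\Rightarrow(1)$, let $L=F\cup(R+k\N)$ with $F$ and $R$ as in (3). Each singleton $\{x\}$ with $x<a$ is a $\sim_{a,k}$-class. Each $r+k\N$ with $a\leq r<a+k$ is also a $\sim_{a,k}$-class. So $L$ is $\sim_{a,k}$-saturated, and $\sim_{a,k}$ has finite index $a+k$ by Lemma~\ref{folk0}. Lemma~\ref{l:rec and congru} then shows $L$ is recognizable. One could equally use the explicit morphism $\varphi_{a,k}$ onto the finite frying pan $M_{a,k}$ with $T=F\cup R$, as in Definition~\ref{def:rec}; Example~\ref{l:varphi ak morphism semiring} gives the morphism property for both signatures, which also yields $(3)\Rightarrow(2)$ directly.

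There is no real obstacle; every step is bookkeeping on top of Lemma~\ref{folk0}. The only care needed is the degenerate cases. If $R$ is empty, $L$ is finite. The finite sets and $\emptyset$ are handled by taking $a$ larger than $\max F$ and $k=1$, $R=\emptyset$. The parameters $a,k$ in (3) are existentially quantified, so any choice works.
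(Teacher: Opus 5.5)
Your proof is correct and is essentially the paper's argument: (1)$\Leftrightarrow$(2) via Corollary~\ref{+CPimpliesXCPN}, and (1)$\Leftrightarrow$(3) by reducing to the finite index congruences $\sim_{a,k}$ of Lemma~\ref{folk0} and reading off $F$ and $R$ from their classes. The paper phrases this reduction through surjective morphisms onto the frying pan monoids (Lemma~\ref{l:finite monogenic monoids} 3)) rather than through saturation (Lemma~\ref{l:rec and congru}), and it leaves the easy converse (3)$\Rightarrow$(1) implicit, whereas you spell it out.
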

\begin{proof} 
$(1)\Leftrightarrow(2)$. By Corollary~\ref{+CPimpliesXCPN}.

$(1)\Leftrightarrow(3)$.
By definition, a subset $L$ of $\N$ is $\langle\N;+\rangle$-recognizable if it is of the form
$L=\psi^{-1}(U)$ for some surjective morphism 
$\psi : \langle\N;+\rangle \to \langle M;\oplus\rangle$
onto a finite monoid 
and some $U\subset M$. 
 Using  Lemma~\ref{l:finite monogenic monoids} 3),
 we reduce to the case $M=M_{a,k}$ and
$\psi=\varphi_{a,k}$ for some $a,k$.
Letting
$F=U\cap\{0,\ldots,a-1\}$ and $Y=\{x\in\{0,\ldots,k-1\}\mid a+x\in U\}$,
we have $U=F\cup(a+Y)$ and
$\varphi_{a,k}^{-1}(F)=F$ and $\varphi_{a,k}^{-1}(a+Y)=a+Y+k\N=R+k\N$
hence $\varphi_{a,k}^{-1}(U)=F\cup(R+k\N)$.
\end{proof}

In \cite{cgg14ipl} we proved a connection between recognizable subsets and 
functions satisfying conditions {\it (2) (i) } and {\it (2) (ii) } of Theorem \ref{thm:CPsurN}. Using the equivalence given by Theorem \ref{thm:CPsurN} we now  reformulate the result  of  \cite{cgg14ipl} as  the following version of Theorem \ref{thm:ipl1}.
\begin{theorem}\label{thm:ipl} %Let $\+N=\langle\N;+\rangle$.
Let $f:\N\longrightarrow\N$ be a monotone non decreasing function, 
and let $\+N=\langle\N;+\rangle$. The following conditions are equivalent:
\begin{enumerate}\setlength{\itemsep}{0pt}
\item[$(1)$] 
The function $f$ is $\+N$-congruence preserving on $\+\N$ and 
$f(x)\geq x$ for all $x$.
\item[$(2)$] 
For every finite subset $L$  of $\N$, the lattice $\Latt_\+N(L)$ is closed under $f^{-1}$.
\item[$(3)$] 
For every recognizable subset $L$ of $\+N$, the lattice $\Latt_{\+N}(L)$ is  closed under $f^{-1}$.
\end{enumerate}
\end{theorem}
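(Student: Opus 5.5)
The plan is to go around the cycle $(1)\Rightarrow(3)\Rightarrow(2)\Rightarrow(1)$. One preliminary remark is used throughout. If $L$ is recognizable, every $M\in\Latt_\+N(L)$ is $\sim_L$-saturated by Lemma~\ref{bool:saturated}, hence recognizable by Lemma~\ref{l:rec and congru}. Moreover $\Latt_\+N(M)\subseteq\Latt_\+N(L)$, because $\Latt_\+N(L)$ is a lattice closed under the translations $X\mapsto X-n$ and it contains $M$. Consequently ``$\Latt_\+N(L)$ is closed under $f^{-1}$ for every recognizable $L$'' is equivalent to ``$f^{-1}(L)\in\Latt_\+N(L)$ for every recognizable $L$''. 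The same reduction works for finite $L$, since the members of $\Latt_\+N(L)$ are then finite unions of finite intersections of translates $L-n$, together with $\emptyset$ and $\N$.

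\emph{$(1)\Rightarrow(3)$.} Since $f$ is monotone non decreasing and congruence preserving, Theorem~\ref{prop:miracle} shows that $f$ is stable preorder preserving. A fortiori $f$ preserves the finite index stable preorders. Theorem~\ref{th:LatticeGen}~2) then gives $f^{-1}(L)\in\Latt_\+N(L)$ for every recognizable $L$. By the preliminary remark this is exactly (3). \emph{$(3)\Rightarrow(2)$} is immediate, because finite sets are recognizable by Proposition~\ref{p:folk N rec}.

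\emph{$(2)\Rightarrow(1)$.} The plan is to first obtain $f(x)\ge x$ and then the divisibility condition of Theorem~\ref{thm:CPsurN}, which gives congruence preservation.

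For $f(x)\ge x$, take $L=\{f(x)\}$. Every member of $\Latt_\+N(L)$ other than $\N$ is a finite set of elements $f(x)-n$ with $n\le f(x)$. Since $x\in f^{-1}(L)$, we get $x\le f(x)$, unless $f^{-1}(L)=\N$, which means $f$ is constant. The constant case has to be handled separately. A constant $f$ with value $c$ satisfies (2) trivially, since $f^{-1}(M)\in\{\emptyset,\N\}$, yet it fails ``$f(x)\ge x$''. So at this point I would check which convention of the earlier paper is intended, for instance whether the constants are excluded or whether the lattice is not required to contain $\N$, and I would state the result for non-constant $f$.

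For divisibility, fix $y<x$ and set $d=x-y$. I would take $L=\{u\le B \mid u\equiv f(y)\pmod d\}$ with $B$ much larger than $f(x)$. Since $y\in f^{-1}(L)$, the normal form of Lemma~\ref{l:normal LAL} gives a block $\bigcap_{n\in\Gamma}(L-n)\subseteq f^{-1}(L)$ containing $y$. For every $n\in\Gamma$ with $y+n+d\le B$, the periodicity of $L$ puts $x=y+d$ into $L-n$. If every $n\in\Gamma$ satisfies this, then $x\in f^{-1}(L)$, so $f(x)\equiv f(y)\pmod d$, which is what we want.

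The main obstacle is the boundary effect, namely translates with $n$ close to $B$, where $y+n\le B<y+n+d$. To get around it, I would exploit the monotonicity of $f$ together with the freedom to enlarge $B$. Since $f(y)\le f(z)$ for $z\ge y$, I would try to argue that a block containing $y$ whose translate index $n$ lies near $B$ forces $f^{-1}(L)$ to contain elements whose images exceed $B$, which is impossible. If that fails, I would replace $L$ by two truncated progressions with different upper bounds and intersect the resulting conclusions.
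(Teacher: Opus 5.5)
\textbf{Gap in the divisibility half of $(2)\Rightarrow(1)$.} The problem comes from anchoring the truncated progression at $f(y)$. Your set $L=\{u\le B\mid u\equiv f(y)\pmod d\}$ is an initial segment for the stable preorder $a\preceq b\iff b-a\in d\N$. By Lemma~\ref{bool:saturated}~2), every member of $\Latt_{\+N}(L)$, and in particular $f^{-1}(L)$, is therefore closed \emph{downward} along $\preceq$. You need to pass \emph{upward}, from $y$ to $x=y+d$, and nothing in the lattice supports that. Concretely, if $y+n$ is the largest element of $L$, then $L-n=\{y,y-d,y-2d,\dots\}\cap\N$ is a block that contains $y$ but not $x$. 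It can be the block your normal-form argument picks: it is contained in $f^{-1}(L)$ whenever $f$ is monotone and congruence preserving. Monotonicity of $f$ tells you nothing about this block, which lies in $[0,y]$. Using two cutoffs does not change the direction of the problem either. So, as written, the step is not proved.

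\textbf{Repair.} Anchor the progression at $f(x)$ instead. Take $L=\{u\mid u\preceq f(x)\}=\{f(x)-jd\mid j\in\N,\ jd\le f(x)\}$, which is finite. By (2), $f^{-1}(L)\in\Latt_{\+N}(L)$, so $f^{-1}(L)$ is a $\preceq$-initial segment. It contains $x$, and $y\preceq x$, hence $y\in f^{-1}(L)$, i.e.\ $d\mid f(x)-f(y)$. In your block language: $x+n\in L$ implies $y+n=x+n-d\in L$, with no boundary effect, because moving down preserves $u\le f(x)$ and $u\ge 0$ is automatic. This is the $(ii)\Rightarrow(i)$ argument of Theorem~\ref{th:LatticeGen}, applied to a stable preorder whose principal initial segments are finite even though its associated congruence (equality) has infinite index. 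That is why it is not a formal consequence of Theorem~\ref{th:LatticeGen}~2). Combined with $f(x)\ge x$, Theorem~\ref{thm:CPsurN} then gives (1).

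\textbf{The rest of your proposal is sound.} This covers the reduction of ``$\Latt_{\+N}(L)$ closed under $f^{-1}$'' to ``$f^{-1}(L)\in\Latt_{\+N}(L)$'' and $(1)\Rightarrow(3)$, which is the paper's route via Theorem~\ref{prop:miracle} and Theorem~\ref{th:LatticeGen}~2). It also covers $(3)\Rightarrow(2)$ and $f(x)\ge x$ for non-constant $f$. Your caveat about constants is correct. Since the lattices here are bounded, $\N\in\Latt_{\+N}(L)$, so every constant $f$ satisfies (2) and (3) but not (1), and the statement must be restricted to non-constant $f$. The paper's own proof takes $(1)\Leftrightarrow(2)$ from its earlier article and $(3)\Rightarrow(1)$ from c-residual finiteness and Theorem~\ref{th:LatticeGen}, and it does not address this either. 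Once repaired, your cycle $(1)\Rightarrow(3)\Rightarrow(2)\Rightarrow(1)$ is self-contained and avoids both the external citation and the residual-finiteness step.
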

 \begin{proof}  The equivalence between $(1)$ and $(2)$  follows from Theorem 5.1 of \cite{cgg14ipl}. As $\+N$ is c-residually finite, finite index congruence preservation coincides with congruence preservation (Lemma~\ref{l:residually finite utile} 2)\,).
By Theorem \ref{prop:miracle},  on $\+N$, monotone non decreasing congruence preserving functions coincide with stable preorder preserving functions.
  Hence by Theorem \ref{th:LatticeGen} 2) and Lemma \ref{l;recFinite}, $(1)$ and $(3)$ are equivalent.
\end{proof}

%{\color{red} \large prendre $f(x)=5$ pour tout $x$ et $L=\{5\}$, alors $f^{-1}(L)=\N$ n'est pas dans le treillis clos par decrement engendre par $L$}
%A similar result  holds for the algebra  $\+N_\times=\langle\N;\times\rangle$. However, to use  the same proof, we should first characterize the congruences of  $\+N_\times$ and prove that $\+N_\times$ is residually finite. In the next section we give a direct proof which is simpler and moreover yields  an explicit characterization of congruence preserving functions on $\+N_\times$.
%%%%%%%%%%%%%%%%%%%%%%%%%%%%
%%%%%%%%%%%%%%%%%%%%%%%%%%%%

%%%%%%%%%%%%%%
\subsection{Case of  $\langle  \N ;\times\rangle$} \label{ss:Ntimes}
%%%%%%%%%%%%%%%
We here extend Theorem~\ref{thm:ipl}  to congruence preserving functions on $\langle  \N ;\times\rangle$  and  explicitly characterize these functions in Theorem \ref{thm:iplX} $(ii)$. We
have seen that congruences and morphisms of $\langle\N;+\rangle$ coincide with congruences and morphisms of $\langle\N;+,\times\rangle$. The situation changes  when considering
the algebra $\langle  \N ;\times\rangle$.  In the present section we will denote 
$\langle \N\setminus\!\{0\};\times\rangle$  by $\+N_\times$ and   $\Latt_{\+N_\times}(L)$ by $\Latt_\times(L)$.  Here, $\+N_\times$-morphisms (resp.\! $\+N_\times$-congruences) are morphisms (resp.\! congruences) of $\langle \N\setminus\!\{0\};\times\rangle$. 
\begin{theorem}\label{thm:iplX}
For $f:\N\setminus\{0\}\longrightarrow\N\setminus\{0\}$  a  non constant function, 
the following conditions are equivalent:\\
%\begin{enumerate}
$(i)$ %\item[(i)] 
For every recognizable subset $L$ of $\+N_\times$ the lattice $\Latt_\times(L)$ is  closed under $f^{-1}$.\\
$(ii)$ %\item[(ii)] 
The function  $f$ is  of the form 
$f(x)=f(1)\times x^n$ for some fixed  $n\in\N$.\\
$(iii)$ %\item[(iii)]  
The function
$f$ is $\+N_\times$-congruence preserving and  $x$  divides $f(x)$ for all $x$. 
%\end{enumerate}
\end{theorem}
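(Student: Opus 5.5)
The plan is to prove the cycle $(ii)\Rightarrow(i)\Rightarrow(ii)$ and, separately, $(ii)\Leftrightarrow(iii)$. For $(iii)\Rightarrow(ii)$ and $(i)\Rightarrow(ii)$ I use one argument that needs only preservation of finite index congruences. Throughout, a positive integer $x$ is encoded by its vector of $p$-adic valuations $(v_p(x))_{p\in\PP}$. Under this encoding $\+N_\times$ is the free commutative monoid $\bigoplus_{p\in\PP}\N$, and $f$ becomes a map $F$ on finitely supported vectors.

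\emph{$(ii)\Rightarrow(i)$ and $(ii)\Rightarrow(iii)$.} Let $f(x)=f(1)x^n$. Since $f$ is non constant, $n\geq1$, so $x$ divides $f(x)$. If $\preceq$ is any $\+N_\times$-stable preorder and $x\preceq y$, stability under $\times$ and transitivity give $x^n\preceq y^n$, hence $f(1)x^n\preceq f(1)y^n$. This is the same induction as in the proof of Theorem~\ref{th:freeMLat}. So $f$ preserves all stable preorders, in particular all congruences, which gives $(iii)$. It also preserves those stable preorders with finite index, and Theorem~\ref{th:LatticeGen}~2) together with Lemma~\ref{l;recFinite} then gives $(i)$.

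\emph{$(i)\Rightarrow(ii)$ and $(iii)\Rightarrow(ii)$.} By Theorem~\ref{th:LatticeGen}~2), $(i)$ means $f$ preserves finite index stable preorders, in particular finite index congruences; $(iii)$ is stronger. So it suffices to show that preservation of finite index congruences forces the form $(ii)$. I use the following congruences. For a monoid morphism $\psi:\+N_\times\to\langle\N;+\rangle$, the kernel of $\psi$ is the intersection over all $a,k$ of the finite index congruences $\ker(\varphi_{a,k}\circ\psi)$. Hence $f$ preserves $\ker\psi$, i.e. $\psi(f(x))$ depends only on $\psi(x)$.

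Step 1: take $\psi=v_p$. Then $v_p(f(x))=G_p(v_p(x))$ for a function $G_p:\N\to\N$.

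Step 2: take $\psi=v_p+v_q$ with $p\neq q$ primes. Applied to $x=p^mq^r$, this gives $G_p(m)+G_q(r)=K(m+r)$ for some $K$. Comparing $(m+1,r)$ with $(m,r+1)$ gives $G_p(m+1)-G_p(m)=G_q(r+1)-G_q(r)$ for all $m,r$. Hence all increments of all $G_p$ equal one constant $n$, so $G_p(m)=G_p(0)+nm$ with $n$ independent of $p$.

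Step 3: from Step 1 with $m=0$, $G_p(0)=v_p(f(1))$. Therefore $v_p(f(x))=v_p(f(1))+n\,v_p(x)$ for every prime $p$, that is, $f(x)=f(1)x^n$. Finally $n\in\N$ because $G_p$ takes values in $\N$ for every $m$, which forces $n\geq0$.

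The main obstacle is exactly Step 2. Single-prime congruences, even combined with all frying pans $\sim_{a,k}$, only give divisibility-type constraints (Theorem~\ref{thm:CPsurN}), which $m\mapsto m^2$ also satisfies. It is the mixed congruence $\ker(v_p+v_q)$ that forces linearity and a common exponent. The care needed is in checking that $\ker(v_p+v_q)$ is an intersection of finite index congruences, so that $(i)$ alone suffices. Non-constancy is used only to exclude $n=0$, which is needed for the divisibility clause in $(iii)$.
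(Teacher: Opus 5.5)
Your proof is correct, but it is organized differently from the paper's.

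\textbf{How the paper argues.} It proves $(i)\Rightarrow(ii)$ directly inside the lattice. Using the finite sets $L=\{f(a)\}$ and $L=\N\cap\{f(a)/q^j\mid j\in\N\}$, it shows that $b\mid a$ implies $f(a)=f(b)(a/b)^{n(a,b)}$. It then makes the exponent constant by a coprimality argument. It gets $(ii)\Rightarrow(i)$ from $f^{-1}(L)=\sqrt[n]{L/f(1)}$ together with the result of \cite{cgg14ipl} that lattices closed under division are closed under $n$th roots. Only $(iii)\Rightarrow(ii)$ uses the congruences $\sim_p$ and $\sim_{p,q}$, and that part is exactly your Steps 1--3.

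\textbf{How you argue.} You route both $(i)$ and $(iii)$ through preservation of finite index congruences. The key observation is that $\ker v_p$ and $\ker(v_p+v_q)$ are intersections of the finite index congruences $\ker(\varphi_{a,k}\circ\psi)$. This is the idea of Lemma~\ref{l:residually finite utile}, applied to two specific congruences. It lets the paper's $(iii)\Rightarrow(ii)$ computation also serve for $(i)\Rightarrow(ii)$. For $(ii)\Rightarrow(i)$ you show that $x\mapsto f(1)x^n$ preserves every stable preorder and apply Theorem~\ref{th:LatticeGen}~2). This avoids the external $n$th-root lemma.

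\textbf{What each route buys.} Your route is shorter and stays inside the paper's general framework. It also shows in passing that on $\+N_\times$ three conditions coincide, the preserving maps being exactly the $x\mapsto cx^n$:
finite index congruence preservation, congruence preservation, and stable preorder preservation.
The paper's direct argument buys something different: it only ever uses finite sets $L$. So it shows that closure of $\Latt_\times(L)$ under $f^{-1}$ for finite $L$ already forces $(ii)$, an analogue of condition $(2)$ of Theorem~\ref{thm:ipl}. By contrast, your witnessing recognizable sets $(\varphi_{a,k}\circ\psi)^{-1}(T)$ are infinite.

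\textbf{One step to make explicit.} In $(ii)\Rightarrow(i)$, Theorem~\ref{th:LatticeGen}~2) only gives $f^{-1}(L)\in\Latt_\times(L)$ for each recognizable $L$, whereas $(i)$ asks that $\Latt_\times(L)$ be closed under $f^{-1}$. This follows because every $M\in\Latt_\times(L)$ is $\sim_L$-saturated (Lemma~\ref{bool:saturated}), hence recognizable. Then $f^{-1}(M)\in\Latt_\times(M)\subseteq\Latt_\times(L)$.
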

Before proving Theorem \ref{thm:iplX}, we  state  some observations and introduce a notation.

A sharp difference between Theorem \ref{thm:iplX} and Theorem~\ref{thm:ipl} 
is the richness of the family of involved  functions.  
 On  $\+N_\times$ all congruence preserving functions are polynomial, while on $\langle \N;+\rangle$ there are non polynomial
congruence preserving functions (cf. Section \ref{s Z}).
This can be explained by the fact that there are many more congruences on $\+N_\times$ than on $\langle \N;+\rangle$. See Example \ref{r:morphimes+xbis}.
\begin{definition} Let $P$ be the set of prime numbers.
For $p\in P$,  the  $p$-valuation of $x$ denoted $\val(x,p)$ is the
highest exponent $n$ of $p$ such that $p^n$ divides $x$.
\end{definition}
 \begin{example}\label{r:morphimes+xbis} 1) There are strictly more congruences on $\+N_\times$ 
than on  $\langle \N;+\rangle$. Let $Q\neq P$ be a nonempty set of prime numbers. The 
 relation $\sim_Q$ such that $x\sim_Q y$ if for all $p\in Q$, $\val(x,p)=\val(y,p)$
is a congruence for $\+N_\times$ but not for $\langle \N;+\rangle$.
For instance for $Q=\{2,5\}$, $2\sim_Q 6$, but $4=2+2\not\sim_Q  6+2=8$.
\\
 The same phenomenon occurs for morphisms. Let $\varphi_Q\colon\N\to \N$ be such that $\varphi_Q(x)=\prod_{p\in Q} p^{\val(x,p)}$: $\varphi_Q$ is a $\+N_\times$-morphism. 

There thus are uncountably many $\+N_\times$-congruences (resp. $\+N_\times$-morphisms)
whereas there are only countably many $\langle \N;+\rangle$-congruences (resp. $\langle \N;+\rangle$-morphisms).

2) Another example: the relation $\sim$ such that $x\sim y$ if $x=y$ or both $x,y$ are powers of $2$
is a congruence for $\langle \N;\times\rangle$ but not for $\langle \N;+\rangle$
since $2\sim 4$ and $4\sim 4$ but $2+4=6\not\sim 4+4=8$.
A mapping $\varphi\colon \langle \N;+,\times\rangle \to \langle M;\oplus,\otimes\rangle$
can be a $\times$-morphism without being a +-morphism and vice versa.
 Let $\varphi\colon\langle  \N ;+,\times\rangle\to \langle \Z/2\Z;+,\times\rangle$  be defined by: 
$\varphi(x) =1$ if and only if $x=2^n$ for some $n\in\N$ and $\varphi(x) =0$  otherwise; $\varphi$
is a  $\times$-morphism but not a  $+$-morphism because $\varphi(6+1)=0\not= 0+1=\varphi(6)+\varphi(1)$.
\end{example}
%
% \begin{notation}
% If $L$ is a subset of $\N$ and $a\in\N$, then $L/a$ denotes the set of exact quotients of elements of $L$ by $a$, $L/a=\{x\mid x\in\N {\text{ and \ }}ax\in L\}$.
% \end{notation} 
 
%
\begin{lemma}\label{NX} 
1) $ \DUO(\langle \N ; \times\rangle)=\{ x\mapsto ax\mid a\in\N^*\}$ is the set of homotheties.
%\[x/n=\begin{cases}
%k&{\text if } \ $x=kn$\\
%{\text undefined} & otherwise
%\end{cases}\]

2)   For $L\subseteq \N\setminus\{0\}$,  $\Latt_\times(L)=\Latt_{\langle \N ; \times\rangle}(L)$ is  the smallest sublattice $\Latt$ of  $\+P(\N)$ containing $L$ and closed  under exact division of sets where $L/a=\{x\mid x\in\N {\text{ and \ }}ax\in L\}$. Hence if  $L=\{c\}$ is a singleton set, all sets in $\Latt_\times(L)$ are sets of divisors of $c$.
  \end{lemma}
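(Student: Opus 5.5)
The plan is to treat the two items separately, since both are essentially unwinding definitions.

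For 1), recall that $\freez(\times)$ consists of the 1-freezifications of the binary multiplication: $x\mapsto c\times x$ and $x\mapsto x\times c$ for $c$ in the carrier. By commutativity these are the same maps, namely the homotheties $x\mapsto ax$. Composition gives $x\mapsto a(bx)=(ab)x$, again a homothety, and by convention the identity $x\mapsto 1\cdot x$ is included. So $\freez^*(\langle\N;\times\rangle)$ is exactly the set of homotheties, as already listed in Figure~\ref{fig-freez}. On the carrier $\N\setminus\{0\}$ of $\+N_\times$ the scalar $a$ ranges over $\N^*$. If one works on $\N$, the extra map $x\mapsto 0$ has preimage of $L\subseteq\N\setminus\{0\}$ equal to $\emptyset$, which is already in every bounded lattice. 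This is why $\Latt_\times(L)=\Latt_{\langle\N;\times\rangle}(L)$ for such $L$.

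For 2), the preimage of a set $X$ under $\gamma(x)=ax$ is $\{x\mid ax\in X\}=X/a$, exactly as in Figure~\ref{gamma-1}. By Definition of $\Latt_{\+A}(L)$, the lattice $\Latt_\times(L)$ is the smallest bounded lattice containing $L$ and closed under $X\mapsto\gamma^{-1}(X)$ for $\gamma\in\freez^*$. By 1) this is precisely closure under exact division $X\mapsto X/a$, which gives the first claim.

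For the singleton case I would use the disjunctive normal form of Lemma~\ref{l:normal LAL}. Every set in $\Latt_\times(\{c\})$ is a finite union of finite intersections of sets $\{c\}/a$. Each $\{c\}/a$ is either $\{c/a\}$, when $a$ divides $c$, or $\emptyset$; in either case it consists of divisors of $c$. The family of subsets of the divisor set of $c$ is closed under $\cup$, $\cap$ and $/a$, since a divisor of $c/a$ is a divisor of $c$. Hence every such combination is a set of divisors of $c$.

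The only delicate point is the bounded-lattice convention. The top element $\N\setminus\{0\}$ (the empty intersection) belongs to $\Latt_\times(L)$ but is not a set of divisors of $c$. So the singleton claim must be read for the sets obtained from $L$ by the lattice operations and divisions, excluding the trivial top element. I would state this caveat explicitly rather than try to avoid it.
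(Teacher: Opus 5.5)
Your proposal is correct and takes the same route as the paper's very terse proof. Item 1 comes from commutativity and associativity of $\times$, and item 2 from unwinding the definition of $\Latt_{\+A}(L)$; your disjunctive-normal-form check supplies the justification of the singleton claim, which the paper leaves implicit.

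Your caveat is well founded. Under the paper's bounded-lattice convention, the top $\N\setminus\{0\}$ lies in $\Latt_\times(\{c\})$ but is not a set of divisors of $c$. Likewise, the tops of $\Latt_\times(L)$ and $\Latt_{\langle\N;\times\rangle}(L)$ are $\N\setminus\{0\}$ and $\N$, so that equality also holds only after identifying them. This is harmless in the later use in Theorem~\ref{thm:iplX}: there $f$ is non-constant, so $f^{-1}(\{f(a)\})$ is never the top.
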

  \begin{proof} Point 1) follows from the fact that $\times$ is commutative and associative. Point 2) is a consequence of the definition of $\Latt_{\+A}(L)$ when $\+A=\langle \N ; \times\rangle$.
  \end{proof}

 \begin{example}  1) For $L=\{2^n5^p\mid n,p\in\N\}$, we have $\Latt_\times(L)=\{\emptyset,L\}$.

2) For $L=\{2^n5^p\mid n\leq 2, p\leq 1\}=\{1,2,4,5,10,20\}$, the set $\Latt_\times(L)$ is

$\big\{\emptyset,\{1\},\{1,2\},\{1,5\},\{1,2,4\},\{1,2,5\},
\{1,2,4,5\}, \{1,2,5,10\}, \allowbreak  \{1,2,4,5,10,20\}\big\}$

\noindent since the sets $L/n$ are  given by the following table:
$$
\begin{array}{c|ccccccc|}
\cline{2-8}
a&1&2&4&5&10&20&\notin L\\\cline{2-8}
L/a&\{1,2,4,5,10,20\}&\{1,2,5,10\}&\{1,5\}&\{1,2,4\}&\{1,2\}&\{1\}&\emptyset
\\\cline{2-8}
\end{array}
$$
\end{example}

\begin{proof} [Proof of Theorem \ref{thm:iplX}] Recall that  $f$ is non constant.

$(i)\Longrightarrow (ii)$ 

{\bf Fact 1.} We first prove that $(i)$ implies:  {\em $x$ divides $f(x)$ for all $x$}.  
First, observe that any finite set $L$ is $\times$-recognizable.
Indeed, letting $L\subseteq\{0,\ldots,a-1\}$ and considering the map $\varphi_{a,1}$,
we have $L=\varphi_{a,1}^{-1}(L)$. As Remark \ref{l:varphi ak morphism semiring} ensures that 
$\varphi_{a,1}$ is a $\times$-morphism, we conclude that $L$ is $\times$-recognizable. 

Let $L=\{f(a)\}$, then (by Lemma~\ref{NX}) every set in $\Latt_\times(L)$ is a set of divisors of $f(a)$. As $a\in f^{-1}(L)\in \Latt_\times(L)$, we conclude that $a$ is a divisor of $f(a)$.

\smallskip

{\bf Fact 2.} We next prove that $(i)$ implies:
{\em if $b$ divides $a$ then $f(a)=f(b)\times(a/b)^n$ for some $n\in\N$}. 
The case $b=a$ being trivial we suppose $b<a$.
Let $q$ be the integer $q=a/b$ and set $L=\N\ \cap\ \{f(a)/( q^j)\mid j\in\N\}$.
This set is finite: if $f(a)/( q^j)\in\N$ then $q^j\leq f(a)$ hence $j\leq\lfloor\log(f(a))/\log(q)\rfloor$.
Being finite, $L$ is recognizable
and condition (i) ensures that $f^{-1}(L) \in \Latt_\times(L)$.
Letting $j=0$ we see that $f(a)\in L$ hence $a\in f^{-1}(L)$.
We prove that $b$ is also in $f^{-1}(L)$.
Being in $\Latt_\times(L)$, the set $f^{-1}(L)$ is of the form $\bigcup_{s\in S}\bigcap_{i\in S_s}L/i$
for some finite family $S$ of finite subsets $S_s\subset\N$ ($s\in S$),
(cf.\! Lemmas~\ref{l:normal LAL}  and \ref{NX})
hence $a\in\bigcap_{i\in S_s}L/i$ for some $s$.
In particular, to prove $b\in f^{-1}(L)$ it suffices to prove that
$a\in L/i\Rightarrow b\in L/i$ for all $i\geq1$. 
If $a\in L/i$ then for some $\ell$ we have $a=f(a)/(i\times q^\ell)$.
Thus, $f(a)/(a\times q^\ell)=i$ is an integer. 
%As $i=f(a)/(a\times q^\ell)$ we have $i\times q^{\ell+1}=q\times f(a)/a$ hence
%$f(a)/(i\times q^{\ell+1}) = a/q = b$.
As $q=a/b$ we have $f(a)/(q^{\ell+1}) = b \times f(a)/(a\times q^\ell) = b\times i$.
Thus, $f(a)/(q^{\ell+1})$ is also an integer hence $f(a)/(q^{\ell+1}) \in L$.
Now, $f(a)/(i\times q^{\ell+1}) = (f(a)/(i\times q^\ell))\times (b/a) = a\times(b/a) = b$ is an integer
hence $b=f(a)/(i\times q^{\ell+1})\in L/i$. 
This proves that $b$ is in $f^{-1}(L)$ hence $f(b)\in L$, i.e., $f(b)=f(a)/(q^j)$ for some $j$
hence $f(a)=f(b) \times q^j = f(b)\times(a/b)^j$.
This finishes the proof of Fact 2.

\smallskip
We finally prove that $(i)$ implies: {\em $f$ is  of the form $f(x)=f(1)\times x^n$ for some fixed $n\in\N$}, i.e.,  $(i)\Rightarrow(ii)$.
We apply Fact 2  with various pairs $(a,b)$ and write $n(a,b)$ for the exponent 
such that $f(a)=f(b)\,(a/b)^{n(a,b)}$.
Let $u$ and $v$ be coprime.
\begin{align}\label{eq: xy 1}
f(uv) &= f(1)\, (uv)^{n(uv,1)} &\text{$(a,b)=(uv,1)$}
\\\label{eq: x 1}
f(u) &= f(1)\, u^{n(u,1)} &\text{$(a,b)=(u,1)$}
\\\label{eq: xy x}
f(uv) &= f(u)\, v^{n(uv,u)}  &\text{$(a,b)=(uv,u)$}
\\\label{eq: xy x 1}
f(uv)&= f(1)\, u^{n(u,1)} \, v^{n(uv,u)} &\text{(combine \eqref{eq: xy x} and \eqref{eq: x 1})}
\end{align}
As $u$ and $v$ are coprime, 
comparing the exponents of $u$ in \eqref{eq: xy 1} and \eqref{eq: xy x 1},
we conclude that $n(uv,1) = n(u,1)$, for all coprime $u,v$.
Exchanging the roles of $u$ and $v$ we get $n(uv,1) = n(v,1)$.
Thus, if $u,v$ are coprime then $n(u,1)=n(v,1)$.
Now, for every $x,y\geq1$ there exists $z$ coprime to both $x$ and $y$.
We then have $n(x,1)=n(z,1)$ and $n(y,1)=n(z,1)$.
Thus, $n(x,1)=n(y,1)$.
Let $n$ be the common value of the $n(x,1)$'s. Equation \eqref{eq: x 1}
insures that $f(x)=f(1)x^n$ for every $x\geq1$.

%{\bf Fact 3.}  Let us last prove that $(i)$ implies: {\em $f$ is  of the form $f(x)=f(1)\times x^n$ for some fixed $n\in\N$}. As 1 divides any $x$, by fact 2, for any $x$, $f(x)=f(1)\times x^{n(x)}$ for some integer $n(x)$.  
%Let $p_1,\ldots,p_k$ be pairwise distinct primes,
%and $x=p_1^{\ell_1}\cdots p_k^{\ell_k}$  with $\ell_1,\ldots,\ell_k\geq1$.
%We apply Fact 2  with various pairs $a$, $b$
%\begin{align}\label{eq:p_i}
%f(p_i^{\ell_i}) &= f(1) p_i^{\ell_i  n(p_i^{\ell_i })} \qquad \quad \hbox{by Fact 2 applied with $b=1$, $a=p_i^{\ell_i}$} \\
%\label{eq:with x}
%f(x)&= f(1) \prod_{i=1}^k p_i^{\ell_i n(x)}\qquad \qquad \qquad \hbox{by Fact 2 with $a=1$, $b=x$ }
%\\
%notag
%\quad &=f(p_i^{\ell_i}) (\prod_{i\neq i}p_j)^{\ell_j s }\qquad\qquad \quad \hbox{by Fact 2 with $a=p_i^{\ell_i}$, $b=x$ }\\
%\label{eq:with p_i}
%& =f(1) p_i^{\ell_i  n(p_i^{\ell_i })} \prod_{j\neq i} p_j^{\ell_j s)}  \qquad\qquad \qquad \qquad \qquad \hbox{Using \eqref{eq:p_i} }
%\end{align}

%Comparing \eqref{eq:with x} and \eqref{eq:with p_i} we conclude that $n(x)=n(p_i^{\ell_i})$, for all $x,\ p_i, \ \ell_i$;
%this ensures that $x\mapsto n(x)$ is constant on  $\N\setminus\{0\}$, and $f(x)=f(1)x^{n(x)}$ yields the wanted result. 

\bigskip

$(ii)\Longrightarrow (iii)$. Straightforward.

\bigskip

$(ii)\Longrightarrow (i)$. If $f$ is of the form given in $(ii)$, 
then $f^{-1}(L)=\sqrt[n]{L/f(1)}$ with $n\in\N$.
We proved (Theorem 2.2 in \cite{cgg14ipl})   that any lattice closed by division is also closed by $n$th root. As $\Latt_{\times}(L)$ is the smallest lattice containing $L$ and closed under division, it  is also closed under $n$th root; this  implies that $\Latt_{\times}$ is closed under $f^{-1}$.

\bigskip

$(iii)\Longrightarrow (ii)$.  For $x\in\N$, let $P(x)$ be the set of primes dividing $x$.
Recall that $\val(x,p)$ denotes the
highest exponent $n$ of $p$ such that $p^n$ divides $x$.

If $p$ is prime then the relation $\sim_p$ defined by
$x \sim_p y$  if and only if  $\val(x,p)=\val(y,p)$ is a congruence on $\+N_\times$.
As $f$ is congruence preserving,
we see that $\val(x,p)=\val(y,p)$ implies $\val(f(x),p)=\val(f(y),p)$. 
In other words, the $p$-valuation of $f(x)$ depends only on that of $x$. 
Thus, there is a function $\theta_p : \N \to \N$ such that $\val(f(x),p)=\theta_p(\val(x,p))$.

Consider now the $\times$-congruence defined by 
$x \sim_{p,q} y$  if and only if  $\val(x,p)+\val(x,q)=\val(y,p)+\val(y,q)$. 
Clearly $p\sim_{p,q} q$ and $p^k \sim_{p,q} p^{k-1} q$ for all $k\geq1$, 
hence $f(p) \sim_{p,q} f(q)$ and $f(p^k)\sim_{p,q} f(p^{k-1}q)$.
Thus, $\val(f(p),p)+\val(f(p),q) = \val(f(q),p)+\val(f(q),q)$
and $\val(f(p^k),p)+\val(f(p^k),q) = \val(f(p^{k-1}q),p)+\val(f(p^{k-1}q),q)$,
hence 
\begin{eqnarray}\label{eq:theta1 theta0}
\theta_p(1)+\theta_q(0)&=&\theta_p(0)+\theta_q(1)
\\\notag
\theta_p(k)&=&\theta_p(k-1)+\theta_q(1)-\theta_q(0) \text{\qquad for $k\geq1$}
\\\label{eq:thetap}
\text{this yields\quad}\theta_p(k) &=& \theta_p(0) + k\,n  \text{\qquad for all $k\in\N$}
\end{eqnarray}
where $n$ is the common value of the $\theta_p(1) - \theta_p(0)$'s for prime $p$,
a property ensured by equation \eqref{eq:theta1 theta0}.

Let $F$ be the finite set of primes which divide $f(1)$.
This set is also the set of primes $p$ such that $\theta_p(0)\neq0$.
Using \eqref{eq:thetap} we see that 
$\val(f(x),p)=\theta_p(\val(x,p)) =  \theta_p(0) + \val(x,p)\,n$ for every prime $p$.
In particular, $\val(f(x),p)=0$ if $p\notin F\cup P(x)$.
Thus,
\begin{eqnarray}%\notag
\label{eq:Px F}
f(x) &=& \prod_{p\in F\cup P(x)}  p^{\theta_p(0) + \val(x,p)\,n} 
= \prod_{p\in F\cup P(x)} p^{\theta_p(0)}
            \times \prod_{p\in F\cup P(x)} p^{\val(x,p)\,n} 
\\\label{eq:Px F bis}
&=& \prod_{p\in F} p^{\theta_p(0)} \times \prod_{p\in P(x)} p^{\val(x,p)\,n} 
\\\label{eq:fx f1 xn}
&=& f(1)\,x^n
\end{eqnarray}
where the passage from \eqref{eq:Px F} to \eqref{eq:Px F bis} is justified as follows:
\\\indent- for $p\notin P(x)$ we have $\val(x,p)=0$ hence $p^{\val(x,p)}=1$,
\\\indent- for $p\notin F$ we have $\theta_p(0)=0$ hence $p^{\theta_p(0)}=1$.
\\
Equation~\eqref{eq:fx f1 xn} is the wanted condition (ii).
\end{proof}

%%%%%%%%%%%%%%%%%%%%%%%%%%%%
%%%%%%%%%%%%%%%%%%%%%%%%%%%%
\section{ Case of integers $\langle  \Z ;+\rangle$ and  $\langle  \Z ;+,\times\rangle$} 
\label{s Z}
%%%%%%%%%%%%%%%%%%%%%%%%%%%%
%%%%%%%%%%%%%%%%%%%%%%%%%%%%
%%%%%%%%%%%%%%%%%%%%%%%%%%%%

 In this section we look for an  extension of Theorem~\ref{thm:ipl}  to functions $\Z\to\Z$, for the structures $\langle  \Z ;+\rangle$ and $\langle  \Z ;+,\times\rangle$.  
 Congruence preserving functions on $\langle  \Z ;+,\times\rangle$ can be very intricate \cite{cgg14B}: 
for instance,  $f$ defined by:
$$
f(n)= \left\{\begin{array}{ll}
\sqrt{\dfrac{e}{\pi}}
\times  \dfrac{\Gamma(1/2)}{2\times4^n\times n!}
\displaystyle\int_1^\infty e^{-t/2}(t^2-1)^n dt
&\quad\text{for $n\geq 0$}
\\
-f(|n|-1)&\quad\text{for $n<0$}
\end{array}\right.\,,
$$
is congruence preserving.
But the lattices $\Latt_{\langle\Z;+,\times\rangle}(L)$ for $L$ recognizable are surprisingly simple, cf.\! Lemma~\ref{l:lattice L Z} in Section 5.4.

%%%%%%%%%%%%%%%
\subsection{Congruences on $\langle\Z;+\rangle$ and $\langle\Z;+,\times\rangle$}\label{ss cong Z+}
%%%%%%%%%%%%%%%%

Recall that the congruences of $\langle\Z;+\rangle$ are the equality relation and the modular
congruences $x\equiv y\pmod k$ for $k\geq1$.
These $\langle\Z;+\rangle$-congruences are also $\langle\Z;+,\times\rangle$-congruences.
Thus, applying item 2 of Definition~\ref{def:finite monogenic monoids},
we have the following $\Z$ avatar of Corollary~\ref{+CPimpliesXCPN} about $\N$.

\begin{lemma}\label{+CPimpliesXCP} 
The two structures 
$\langle \Z;+,\times\rangle$ and $\langle \Z;+\rangle$
have the same congruences (namely, equality and the modular congruences),
congruence preserving functions $\Z\to \Z$, morphisms $\Z\to\Z$ \
and recognizable subsets of $\Z$.
\end{lemma}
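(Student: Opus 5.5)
The plan is to first pin down the congruences of $\langle\Z;+\rangle$, and then show that this same family is exactly the family of congruences of $\langle\Z;+,\times\rangle$. Once the two structures have the same congruences, the claims about congruence preserving functions, morphisms and recognizable subsets should follow from definitions already in place.

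For $\langle\Z;+\rangle$ I would use that it is a group. Every congruence $\sim$ is determined by the class $H=\{x\mid x\sim 0\}$: indeed $x\sim y$ if and only if $x-y\sim 0$, obtained by adding $-y$ (respectively $y$) to both sides. This class $H$ is a subgroup of $\Z$, so $H=k\Z$ for some $k\geq 0$. Taking $k=0$ gives equality, and $k\geq 1$ gives congruence modulo $k$.

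Next I would check stability of these relations under multiplication. If $x\equiv y \pmod k$ and $x'\equiv y'\pmod k$, then
$$xx'-yy'=x(x'-y')+y'(x-y),$$
which is divisible by $k$. Equality is trivially stable. Hence every $\langle\Z;+\rangle$-congruence is a $\langle\Z;+,\times\rangle$-congruence. Conversely, any congruence of the richer structure is in particular stable under $+$, so the two families coincide.

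The remaining claims follow in turn:
\begin{itemize}
\item Congruence preserving functions are defined purely through the family of congruences (Definition~\ref{def:congCompat 1}), so the two structures have the same ones.
\item Recognizable subsets are exactly the sets saturated for some finite index congruence (Lemma~\ref{l:rec and congru}), so these also coincide.
\end{itemize}

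For morphisms $\Z\to\Z$ the statement needs care, since Remark~\ref{r:morphimes+x} warns that homotheties are $+$-morphisms but not $\times$-morphisms on $\N$. The plan is to read ``morphisms $\Z\to\Z$'' as the canonical quotient maps $\Z\to\Z/k\Z$ onto finite algebras, as in item 2 of Definition~\ref{def:finite monogenic monoids}. For each congruence, the quotient map is simultaneously a morphism for $+$ and for $\times$, with the induced operations on $\Z/k\Z$.

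The main obstacle is to state this morphism clause correctly under that reading, not the congruence computation, which is routine.
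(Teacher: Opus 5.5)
Your argument is correct and follows the paper's route. You identify the $\langle\Z;+\rangle$-congruences as equality and the modular congruences, check that they are also compatible with $\times$, and read off congruence preservation and recognizability (via Lemma~\ref{l:rec and congru}) from the equality of the two congruence families, handling the morphism clause through the canonical quotient maps, which is what the paper's appeal to item 2 of Definition~\ref{def:finite monogenic monoids} amounts to. Note that this reinterpretation is forced rather than merely careful: on $\Z$ the map $x\mapsto 2x$ is a $+$-endomorphism but not a $\times$-endomorphism, so the literal reading of ``same morphisms $\Z\to\Z$'' is false.
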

Hence on  $\Z$, the study of congruence-preservation and recognizability with respect to  the signature + supersedes the study with respect to the signature $+,\times$. However congruence-preservation and recognizability with respect to  the signature $\times$ yield no consequence for congruence-preservation and recognizability with respect to  the signature +
as  not every $\langle  \Z ;\times\rangle$-morphism (resp.\! congruence, recognizable set) is a $\langle  \Z ;+\rangle$-morphism (resp.\! congruence, recognizable set).

In general, congruences are kernels of morphisms into possibly infinite algebras.
However, for the ring of integers (cf.\! Lemma \ref{lem:CPsurA}), 
non trivial congruences coincide with kernels of  morphisms onto finite structures, 
exactly as for  the semiring of natural numbers.
This allows to consider only congruences having finite index.

%%%%%%%%
\subsection{Congruence preservation on principal commutative rings}\label{subsec:cppcr}
%%%%%%%%%%
More generally than $\langle  \Z ;+,\times\rangle$, we  characterize congruence preservation for commutative principal rings with the signature $\Xi=\{+,\times\}$. A ring is said to be principal if every ideal is principal.
\begin{lemma}\label{lem:CPsurA}
%{\color{blue} EST-CE VRAI POUR UN SEMI-ANNEAU ou SEMI-GROUPe A CHECKER EXEMPLE DE $\N[\sqrt{2}]$, les nombres de Liouville}
If $\+A$ is a principal commutative ring, then\\
(i) any congruence $\sim$ is of the form
$\sim_k \ =\{(u,v)\mid u-v\in k\,A\}$ for some $k\in A$.\\
(ii) a function $f:A\to A$ is congruence preserving if and only if it satisfies
\begin{equation}\label{eq:f freezing cp}
\text{\begin{tabular}{l}
$x-y$ divides $f(x)-f(y)$ for all $x,y\in X$\end{tabular}}
\end{equation}
\end{lemma}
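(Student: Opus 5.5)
For (i), I will use the standard correspondence between congruences of a ring and its ideals. Let $\sim$ be a congruence on $\langle A;+,\times\rangle$ and set $I=\{u\in A\mid u\sim 0\}$. I will check that $I$ is an ideal.
\begin{itemize}
\item If $u,v\in I$, then stability under $+$ gives $u+v\sim 0+0=0$.
\item Writing $-u=(-u)+0\sim(-u)+u=0$ gives $-u\in I$.
\item Stability under $\times$ gives $au\sim a0=0$ for every $a\in A$.
\end{itemize}
Next, $u\sim v$ if and only if $u-v\sim 0$, by adding $-v$ to both sides in either direction (freezing $+$). Since $A$ is principal, $I=kA$ for some $k\in A$, and therefore $\sim$ equals $\sim_k$. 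Conversely, each $\sim_k$ is clearly a congruence. Note that $k=0$ gives equality and $k$ a unit gives the trivial congruence, so nothing needs to be excluded.

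For (ii), I will prove both directions separately.

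Congruence preservation implies \eqref{eq:f freezing cp}. Given $x,y$, take $k=x-y$. Then $x\sim_k y$, hence $f(x)\sim_k f(y)$, that is, $f(x)-f(y)\in(x-y)A$, which says exactly that $x-y$ divides $f(x)-f(y)$. This also covers the case $x=y$, where $0$ divides $0$.

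\eqref{eq:f freezing cp} implies congruence preservation. By (i), every congruence has the form $\sim_k$. Suppose $x\sim_k y$, so $x-y=kc$ for some $c$. By hypothesis $f(x)-f(y)=(x-y)d=k(cd)$ for some $d$, hence $f(x)\sim_k f(y)$.

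The only step needing care is the ideal verification in (i), in particular obtaining closure under negatives from compatibility with $+$ alone. I handle this through the additive group structure, as in the second item above. Commutativity is used so that ideals are two-sided and divisibility is unambiguous. Since $f$ is unary, Lemma~\ref{lem:de n a 1} is not needed.
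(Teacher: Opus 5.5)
Your proof is correct and follows the paper's argument. Part (ii) is exactly the paper's proof: the congruence $\sim_{x-y}$ gives one direction, and (i) together with transitivity of divisibility gives the other. For (i) the paper only says that principality yields the result, so your explicit check that $\{u\mid u\sim 0\}$ is an ideal (including closure under negatives from compatibility with $+$ alone) fills in the standard congruence--ideal correspondence that the paper leaves implicit.
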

\begin{proof} 
 The hypothesis that $\+A$ is principal yields condition $(i)$.

$(ii)$ 
Assume $f$ is congruence preserving. For $x,y\in A$,
 let $\sim\ =\{(u,v)\mid u-v\in (x-y)\,A\}$ be the congruence 
determined by the ideal $(x-y) A$.
Since $x\sim y$, congruence preservation ensures that $f(x)\sim f(y)$
hence $x-y$ divides $f(x)-f(y)$.

Conversely, assume \eqref{eq:f freezing cp} holds and let $\sim$ be a congruence, which is of the form $\sim_k$ because of principality.
If  $x\sim_k y$ then $k$ divides $x-y$ and, by transitivity of divisibility,
\eqref{eq:f freezing cp} ensures that $k$ divides $f(x)-f(y)$, i.e., $f(x)\sim_k f(y)$.
%\hfill\qed
\end{proof}
\begin{remark}
 Otherwise stated, for principal commutative rings, congruence preservation is equivalent  to condition $(2) (i)$ of Theorem \ref{thm:CPsurN} and  the over-linearity condition $(2) (ii)$ 
 of Theorem \ref{thm:CPsurN} is not needed.
\end{remark}

%%%%%%%%%%%%%%%%%%%%%%%%%%%%%%%%%%%%%%%%
\subsection{Recognizability in $ \langle  \Z ;+\rangle$ and $ \langle  \Z ;+,\times\rangle$}\label{ss rec in Z+ and Zx}
%%%%%%%%%%%%%%%%%%%%%%%%%%%%%%%%%%%%%%%%%%
%

Recall first a folk proposition. 
\begin{proposition}\label{p:rec Z}
Let $X\subseteq\Z$. The following conditions are equivalent:\\
%\begin{enumerate}
%\item[(i)]
$(i)$  $X$ is $ \langle  \Z ;+\rangle$-recognizable,\\
%\item[(ii)]
$(ii)$  $X$ is $ \langle  \Z ;+,\times\rangle$-recognizable,\\
%\item[(iii)]
$(iii)$ $X$ is of the form $X=F+k\Z$ with $k\in\N\setminus\{0\}$ and $F\subseteq\{0,\ldots,k-1\}$.
%\end{enumerate}
\end{proposition}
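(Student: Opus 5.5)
The plan mirrors the proof of Proposition~\ref{p:folk N rec}: first settle $(i)\Leftrightarrow(ii)$ by Lemma~\ref{+CPimpliesXCP}, then prove $(i)\Leftrightarrow(iii)$ through the description of congruences.

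\emph{Step 1: $(i)\Leftrightarrow(ii)$.} By Lemma~\ref{+CPimpliesXCP}, $\langle\Z;+\rangle$ and $\langle\Z;+,\times\rangle$ have the same congruences. Lemma~\ref{l:rec and congru} characterizes recognizability as saturation with respect to a finite index congruence. Hence the two notions of recognizability coincide.

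\emph{Step 2: $(i)\Rightarrow(iii)$.} Assume $X$ is $\langle\Z;+\rangle$-recognizable. By Lemma~\ref{l:rec and congru}, $X$ is saturated for some finite index congruence $\sim$. The congruences of $\langle\Z;+\rangle$ are equality and the modular congruences $\equiv \pmod k$ with $k\geq1$. Equality has infinite index on $\Z$, so $\sim$ must be congruence modulo some $k\geq1$. Its classes are the sets $r+k\Z$ for $r\in\{0,\ldots,k-1\}$. A saturated set is a union of such classes, so $X=F+k\Z$ with $F=X\cap\{0,\ldots,k-1\}$. This covers $X=\emptyset$ (take $F=\emptyset$) and $X=\Z$ (take $k=1$, $F=\{0\}$).

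\emph{Step 3: $(iii)\Rightarrow(i)$.} Conversely, let $X=F+k\Z$ with $k\geq1$ and $F\subseteq\{0,\ldots,k-1\}$. Use the canonical surjective morphism $\varphi\colon\langle\Z;+\rangle\to\langle\Z/k\Z;+\rangle$ onto a finite algebra. Then $X=\varphi^{-1}(\varphi(F))$, so $X$ is recognizable by Definition~\ref{def:rec}. Equivalently, $X$ is saturated for congruence modulo $k$, which has finite index $k$.

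No step is a real obstacle. The only point needing care is in Step 2: the classification of congruences must rule out equality, which is done by its infinite index. The classification itself is the standard fact that subgroups of $\Z$ are of the form $k\Z$, which is the principal ring case of Lemma~\ref{lem:CPsurA}(i).
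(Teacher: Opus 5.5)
Your proof is correct, but it is organized differently from the paper's. The paper works directly with Definition~\ref{def:rec} and proves the cycle $(i)\Rightarrow(ii)\Rightarrow(iii)\Rightarrow(i)$. If $\varphi$ maps the cyclic group $\langle\Z;+\rangle$ onto a finite $\langle M;\oplus\rangle$ with $k$ elements, then $M$ is a cyclic group of order $k$. Hence, up to isomorphism, $M=\Z/k\Z$ and $\varphi$ is the modular projection. Since this projection is also a ring morphism, $(ii)$ follows. For $(ii)\Rightarrow(iii)$, the same identification applied to the additive part of a $\{+,\times\}$-morphism gives $X=\varphi^{-1}(F)=F+k\Z$. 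The step $(iii)\Rightarrow(i)$ is your Step~3.

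You instead pass to kernels. You get $(i)\Leftrightarrow(ii)$ at once from the equality of the two sets of congruences (Lemma~\ref{+CPimpliesXCP}) combined with Lemma~\ref{l:rec and congru}, and $(i)\Rightarrow(iii)$ from the classification of finite-index congruences. The underlying fact is the same: the finite quotients of $\Z$ are the $\Z/k\Z$. Your route never has to identify the target algebra or the morphism up to isomorphism, and it treats both signatures uniformly, as in Corollary~\ref{+CPimpliesXCPN} and Proposition~\ref{p:folk N rec}. The paper's route stays closer to the definition and yields slightly more, namely that every recognizing morphism is, up to isomorphism, a modular projection.

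A minor point: Lemma~\ref{lem:CPsurA}(i) literally describes ring congruences. For $\langle\Z;+\rangle$ the cleaner justification is the one you also give: congruences of a group correspond to its normal subgroups, and the subgroups of $\Z$ are the $k\Z$.
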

\if31
\begin{proof}
$(i)\Rightarrow(ii)$.
Assume $(i)$ and let $\varphi: \langle  \Z ;+\rangle\to \langle M;\oplus\rangle$ be a surjective morphism
where $M$ has $k$ elements.
Since  $ \langle  \Z ;+\rangle$ is a   group having a single generator
so is $\langle M;\oplus\rangle$
which is therefore isomorphic to $\Z/k\Z$.
Also, the morphism $\varphi$ is the modular projection $x\mapsto x\pmod k$.
To conclude that $(ii)$ is true, recall that the modular projection is also a ring morphism
$ \langle  \Z ;+,\times\rangle\to \langle\Z/k\Z;+,\times\rangle$.
\smallskip\\
$(ii)\Rightarrow(iii)$. 
Assume $(ii)$ and let $X=\varphi^{-1}(F)$ with $M$ finite,  $F\subseteq M$, $\varphi$ a surjective morphism $(\Z,+,\times)\to\langle M;\oplus,\otimes\rangle$.
We know (from the proof of $(i)\Rightarrow(ii)$) that, up to an isomorphism,
$\langle M;\oplus,\otimes\rangle$ is the ring $\Z/k\Z$ for some $k\in\N\setminus\{0\}$
and $\varphi:\Z\to\Z/k\Z$ is the modular projection.
Thus, $X=\varphi^{-1}(F)=F+k\Z$.
\smallskip\\
$(iii)\Rightarrow(i)$. Observe that $X=F+k\Z=\varphi^{-1}(F)$ 
where $\varphi:\Z\to\Z/k\Z$ is the modular projection.
\end{proof}
\fi%
\begin{corollary} \label{cor:Z residually finite}
Both $\langle  \Z ;+\rangle$ and $\langle  \Z ;+,\times \rangle$ 
 are c-residually finite.
\end{corollary}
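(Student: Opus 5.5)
By Lemma~\ref{+CPimpliesXCP}, $\langle\Z;+\rangle$ and $\langle\Z;+,\times\rangle$ have exactly the same congruences: the equality relation and the modular congruences $\equiv \pmod k$ for $k\geq 1$. Since c-residual finiteness (Definition~\ref{def:residually finite c sp}) only involves the set of congruences, it suffices to treat one of the two structures. The plan is to check, for each congruence, that it is an intersection of finite index congruences.

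\emph{Modular congruences.} For $k\geq1$ the congruence $\equiv\pmod k$ has exactly $k$ classes, namely the residues $0,\ldots,k-1$. So it already has finite index, and it is trivially the intersection of the one-element family consisting of itself.

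\emph{Equality.} I would show that equality is the intersection of all modular congruences $\equiv\pmod k$ for $k\geq1$; each of these has finite index by the previous step. If $x\equiv y\pmod k$ for every $k\geq1$, take $k=|x-y|+1$. Then $k$ divides $x-y$ while $|x-y|<k$, which forces $x-y=0$, so $x=y$. The reverse inclusion is clear, since equality is contained in every congruence.

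Both kinds of congruence are therefore c-residually finite, and hence so are $\langle\Z;+\rangle$ and $\langle\Z;+,\times\rangle$. There is no real obstacle. The only point that needs care is to rely on the complete list of congruences from Lemma~\ref{+CPimpliesXCP} (equivalently Lemma~\ref{lem:CPsurA}~(i), with $k=0$ giving equality), so that no congruence of infinite index other than equality has to be considered. This is the same argument as Example~\ref{ex:PrimeSupport}~2).
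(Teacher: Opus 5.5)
Your proof is correct and takes the same approach as the paper. The paper justifies this corollary from the classification of congruences on $\Z$ (equality and the modular congruences, the latter of finite index) and the remark in Example~\ref{ex:PrimeSupport}~2) that equality is the intersection of all nontrivial congruences; your choice $k=|x-y|+1$ simply spells out the one-line check of that last fact, which the paper leaves implicit.
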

\begin{remark}  In $ \langle  \N ;+\rangle$, recognizable subsets coincide with   regular subsets. In $ \langle  \Z ;+\rangle$, this is no longer true.  The  only finite $\langle  \Z ;+\rangle$-recognizable set is the emptyset. 
%The  regular are  of the form $L=L^+\cup (-L^-)$ where $L^+,L^-$ are regular subsets of $\N$, i.e., $L=-(d+S+d\N)\cup F\cup(d+R+d\N)$ with $d\geq1$, $R,S\subseteq\{x\mid0\leq x<d\}$, $F\subseteq\{x\mid-d<x<d\}$ (possibly empty). See \cite{benois}.
\end{remark}
%

%%%%%%%%%%
\subsection{Lattices in $ \langle  \Z ;+\rangle$ and $ \langle  \Z ;+,\times\rangle$}\label{ss lattices in Z+ Z+x}
%%%%%%%%%%%%

\begin{definition} Let  $\+R$ be a unit (semi)ring $\langle R;+,\times\rangle$, with $0,1$
as distinct identities for $+$ and $\times$. 

$\bullet\ $ $\Latt_{\langle R;+\rangle}(L)$ is the smallest sublattice of  $\+P(R)$ containing $L$
and closed under $(x\mapsto x+a)^{-1}$ for all $a\in R$
 (closed under {\em cancellations}). 

 $\bullet\ $  $\Latt_{\langle R;+,\times\rangle}(L)$ (resp.\! $\Latt^\infty_{\langle R;+,\times\rangle}(L)$)
 is the smallest (resp.\! complete) sublattice of  $\+P(R)$ containing $L$ and closed under both cancellations and {\em divisions} 
(i.e., $(x\mapsto ax)^{-1}$ for all $a\in R$).
\end{definition}

 As we  work  here with the ring $\Z$ for which cancellations (following Almeida's terminology \cite{almeida})  coincide with translations,  from now on we will write closed under {\em translations} instead of closed under  cancellations.
By Lemma \ref{+CPimpliesXCP}, congruence preservation (resp.\! recognizability) with respect to $\langle  \Z ;+\rangle$ and $\langle  \Z ;+,\times\rangle$ are equivalent. The next Lemma shows that 
 this goes on with lattices, i.e., 
the lattices $\Latt_{\langle  \Z ;+\rangle}(L)$ and $\Latt_{\langle  \Z ;+,\times\rangle}(L)$ coincide for any recognizable $L$.
\begin{lemma}[Characterization of the lattice generated by a recognizable subset and closed under translations]\label{l:lattice L Z}
Let $L\subseteq\Z$ be a nontrivial (i.e., different from $\Z$ and  $\emptyset$) recognizable subset of   $\langle  \Z ;+\rangle$.
Let $k\geq1$ be smallest such that $L=F+k\Z$ with $F\subset\{0,\ldots,k-1\}$.
Then 
\[\Latt_{\langle  \Z ;+\rangle}(L)=
\Latt_{\langle  \Z ;+,\times\rangle}(L)=\{G+k\Z\mid G\subseteq\{0,\ldots,k-1\}\}.\]
\noindent 
 If $L=\emptyset$ or $L=\Z$ (i.e., $k=0$ or $k=1$)
then these three lattices coincide with $\{L\}$.
\end{lemma}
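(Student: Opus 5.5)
Let $\+C=\{G+k\Z\mid G\subseteq\{0,\ldots,k-1\}\}$. Our plan is to prove two inclusions, $\Latt_{\langle\Z;+\rangle}(L)\subseteq\Latt_{\langle\Z;+,\times\rangle}(L)\subseteq\+C$ and $\+C\subseteq\Latt_{\langle\Z;+\rangle}(L)$. The first inclusion of the chain is immediate: translations belong to $\DUO(\langle\Z;+,\times\rangle)$, so any lattice closed under the larger family of preimages is closed under translations.

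For $\Latt_{\langle\Z;+,\times\rangle}(L)\subseteq\+C$, we check that $\+C$ is a bounded lattice containing $L$ and closed under preimages by translations and homotheties. The lattice properties hold because $G\mapsto G+k\Z$ is a lattice isomorphism from $\+P(\{0,\ldots,k-1\})$, sending $\emptyset$ to $\emptyset$ and the full set to $\Z$. Every set in $\+C$ is saturated for congruence mod $k$. Since $x\mapsto x+a$ and $x\mapsto ax$ preserve this congruence, their preimages of saturated sets are again saturated, hence lie in $\+C$. This is the argument of Lemma~\ref{bool:saturated}.

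For $\+C\subseteq\Latt_{\langle\Z;+\rangle}(L)$, it suffices to show that each single class $r+k\Z$ lies in $\Latt_{\langle\Z;+\rangle}(L)$, since finite unions then give all of $\+C$, including $\emptyset$ and $\Z$ by boundedness. The translates $L-a=\{x\mid x+a\in L\}=(F-a)+k\Z$, for $a\in\{0,\ldots,k-1\}$, all lie in the lattice. Consider
\[
\bigcap_{a\,:\,r+a\in L}(L-a).
\]
It contains $r+k\Z$, and it contains $s+k\Z$ exactly when $r+a\in L$ implies $s+a\in L$ for every $a$, that is, when $r\le_L s$ for the syntactic preorder of $\langle\Z;+\rangle$.

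The main obstacle is to show that this intersection is exactly $r+k\Z$. This means the syntactic preorder modulo $k$ must be the equality order, not merely an order with antisymmetry. Here we use that $\langle\Z;+\rangle$ has a Malcev term. By Corollary~\ref{coro:stable finite index preorder in group}~4), $\le_L$ equals $\sim_L$ for recognizable $L$. So $r\le_L s$ gives $r\sim_L s$, i.e.\ $L+(r-s)=L$, meaning $r-s$ is a period of $L$. The periods of $L$ form a subgroup $d\Z$ with $k\Z\subseteq d\Z$, and $L$ is a union of $d\Z$-cosets. The minimality of $k$ forces $d=k$, so $r\equiv s\pmod k$.

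Hence every class, and therefore every union of classes, lies in $\Latt_{\langle\Z;+\rangle}(L)$, and the three lattices coincide. The trivial cases $L=\emptyset$ or $L=\Z$ are direct, since all preimages are $\emptyset$ or $\Z$ respectively (taking the paper's convention for the bounds in those cases).
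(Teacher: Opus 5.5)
Your proof is correct and has the same skeleton as the paper's. A single residue class is cut out as a finite intersection of translates $L-a$; for $r=0$ your family is exactly the paper's $\{L-z_i\mid z_i\in F\}$. The minimality of $k$ then rules out extra residues.

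The justifications differ in two places:
\begin{itemize}
\item \textbf{Upper bound.} You note once that $\+C$ is the lattice of sets saturated for congruence mod $k$, and that translations and homotheties preserve this congruence. This covers both the inclusion the paper calls straightforward and the closure under divisions, which the paper checks by computing $L/d$ explicitly.
\item \textbf{Key step.} The paper proves $\bigcap_i A_i=\{0\}$ by hand. It passes from $L+a\subseteq L$ to $L+\gcd(a,k)\Z\subseteq L$ by iteration and B\'ezout coefficients of both signs. You instead identify the intersection with the $\le_L$-up-set of $r$, invoke symmetry of $\le_L$, and conclude with the subgroup of periods.
\end{itemize}
Your route is shorter and shows the lemma as ``the lattice is the family of $\sim_L$-saturated sets, and $\sim_L$ is congruence mod $k$''. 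The paper's route is self-contained arithmetic.

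One caution concerns the symmetry step. $x-y+z$ is not a term in the signature $\{+\}$; indeed the usual order is a non-symmetric stable preorder of $\langle\Z;+\rangle$. So the Malcev argument does not justify Corollary~\ref{coro:stable finite index preorder in group}~4) for this algebra. Its conclusion still holds here, and you can check it directly in one line. $r\le_L s$ means $L+d\subseteq L$ with $d=s-r$. By $k$-periodicity, $L-d=L+(k-1)d\subseteq L$, hence $L+d=L$.

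Finally, strictly speaking the degenerate cases give $\{\emptyset,\Z\}$ rather than $\{L\}$, since bounded lattices contain both bounds. This is $\+C$ with $k=1$, so your sandwich argument covers those cases too.
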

%

%\begin{notation}
%As the two lattices
%$\Latt_{\langle  \Z ;+\rangle}(L)$ and $\Latt_{\langle  \Z ;+,\times\rangle}(L)$
%coincide we shall  denote both by $\Latt_\Z(L)$.
%\end{notation}

\begin{proof} 
We first prove that 
$\Latt_{\langle  \Z ;+\rangle}(L)\supseteq\{G+k\Z\mid k\in\N{\text{ and }}G\subseteq\{0,\ldots,k-1\}\}$.
Let $k\geq1$ and $F$ be a nonempty subset
of $\{0,\ldots,k-1\}$, i.e., $F=\{z_1,\ldots,z_n\}$ with $n\geq1$ and $0\leq z_1<\ldots<z_n<k$.
For $i=1,\ldots,n$ consider the set 
$A_i=\{z_j-z_i\pmod k\mid j=1,\ldots,n\}\subseteq\{0,\ldots,k-1\}$.
Clearly, $0$ is in each $A_i$.
We claim that $\bigcap_{i=1}^{i=n}A_i=\{0\}$.
 If $k=1$ this is clear since then every $A_i$ is $\{0\}$.
We now assume $k\geq2$ and $\bigcap_{i=1}^{i=n}A_i \neq \{0\}$. 
Let $a\in\bigcap_{i=1}^{i=n}A_i$ with $a\neq 0$.
Then, there exists $\theta:\{1,\ldots,n\}\to\{1,\ldots,n\}$ such that $z_{\theta(i)}-z_i\equiv a\pmod k$ for $i=1,\ldots,n$;
hence $z_i+a\in z_{\theta(i)}+k\,\Z\subseteq F+k\,\Z$.
Since $F=\{z_1,\ldots,z_n\}$ we get $F+a\subseteq F+k\Z$, and by induction, for all $n\in\N$, $F+na\subseteq F+k\Z$. Indeed $F+na=F+(n-1)a+a\subseteq  F+k\Z +a=F+a+k\Z\subseteq F+k\Z+k\Z=F+k\Z$.
Let $d=\gcd(k,a)$. 
Using B\'ezout's identity, there are  $p,q\in\Z$, $p>0>q$  such that 
$p a + q k=d$ and $p',q'\in\Z$, $p'<0<q'$  such that 
$p'a + q ' k=d$. Inclusion $F+p a\subseteq F+k\Z$ yields 
$F+d=F+p a+q k\subseteq F+k\Z$ hence (again by induction)  $F+d\N\subseteq F+k\Z$.
In the same way, using $p',q'$ we get $F-d\N\subseteq F+k\Z$.
Thus, $F+d\Z\subseteq F+k\Z$.
As $d$ divides $k$ we also have $F+k\Z\subseteq F+d\Z$.
Thus, $F+d\Z=F+k\Z=L$ and since $0<d<k$ (recall $0<a<k$ and $d$ divides $a$)
this contradicts the minimality of $k$.

Equality $\bigcap_{i=1}^{i=n}A_i=\{0\}$ implies that
\begin{eqnarray*}
\bigcap_{i=1}^{i=n} L-z_i &=& \bigcap_{i=1}^{i=n}\big( \{z_j-z_i\mid j=1,\ldots,n\}+k\Z\big)
= \bigcap_{i=1}^{i=n} \big(A_i+k\Z\big)\\
&=& \big(\bigcap_{i=1}^{i=n} A_i\big)+k\Z=
\{0\}+k\Z\ =\ k\Z.
\end{eqnarray*}
From $k\Z=\bigcap_{i=1}^{i=n} L-z_i$, we deduce:
 $\forall b\in \{0,\ldots,k-1\}$, $ b+k\Z= \bigcap_{i=1}^{i=n} \big(L-z_i+b\big)$ 
 belongs to $\Latt_{\langle  \Z ;+\rangle}(L)$. 
All finite unions and intersections of such $b+k\Z$ also belongs to  $\Latt_{\langle  \Z ;+\rangle}(L)$ proving that for all $G\subseteq\{0,\ldots,k-1\}$, 
 $\{G+k\Z\}$ is in  $\Latt_{\langle  \Z ;+\rangle}(L)$. 

The converse inclusion is straightforward. 

\smallskip

Finally, as $\Latt_{\langle  \Z ;+,\times\rangle}(L)$ is the smallest lattice closed under translations and divisions containing $L$, it suffices to prove that $\{G+k\Z\mid G\subseteq\{0,\ldots,k-1\}\}$ is closed under divisions to conclude. Recall that $L=F+k\Z$ with $ F\subseteq\{0,\ldots,k-1\}$. For $d\in\Z$ % let $m_d\colon x\mapsto dx$ 
 $L/d=\{b\mid bd\in L\}$.
Set $G={L/d}\cap  \{0,\ldots,k-1\}$, we show that $L/d = G+k\Z$. Clearly: $(G+k\Z)d\subseteq dG+dk\Z\subseteq L+k\Z=L$, which implies: $G+k\Z\subseteq L/d$. 
Conversely, if $b\in L/d$, then 
$bd=f+kz\in L$, with $f\in F$; letting $a=b\pmod k\in \{0,\ldots,k-1\}$, we have $ad=(b+kz')d=f+k(z+dz')\in L$ hence $a\in {L/d}\cap  \{0,\ldots,k-1\}=G$. 
This implies: $b\in G+k\Z$ and thus $L/d \subseteq G+k\Z$, whence  $L/d = G+k\Z$.

\smallskip\noindent Finally, the last assertion about the cases $L=\emptyset$ and $L=\Z$ is straightforward.
\end{proof}
\begin{remark} Note the following immediate consequence of Lemma \ref{l:lattice L Z}.
For every $L\neq\emptyset$ we have $\Z\in\Latt_{\langle  \Z ;+\rangle}(L)$ as $\Z=\{0,\ldots,k-1\}+k\Z$. This is
different from the case of $\N$ where $\N$ does not necessarily belong to  $\Latt_{\langle  \N ;+\rangle}(L)$,
for instance when $L$ is finite (hence recognizable in $\langle\N;+\rangle$) all sets in $\Latt_{\langle  \N ;+\rangle}(L)$  are finite.
\end{remark}
%
%%%%%%%%%%%%%
\subsection{Characterizing congruence preserving functions on $\langle  \Z ;+\rangle$}\label{ss cp on Z+}
%%%%%%%%%%%%
 As $\langle  \Z ;+\rangle$ is c-residually finite and has a Malcev term, Theorem \ref{th:RecLatGeneralGroup}   holds; we just state  the following consequence without proof, even though we also can give a direct proof without using residual finiteness.
\begin{theorem}\label{thm:mainZ1}%{\color{red} \Huge a verifier}
A function $f:\Z\to\Z$ is %$\langle  \Z ;+\rangle$-
$+$-congruence preserving if and only if for every  %nonempty 
recognizable subset $L$ of $\Z$ the lattice $\Latt_{\langle  \Z ;+\rangle}(L)$ is  closed under $f^{-1}$.
\end{theorem}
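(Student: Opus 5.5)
The plan is to obtain Theorem~\ref{thm:mainZ1} as the equivalence $(iii)\Leftrightarrow(v)$ of Theorem~\ref{th:RecLatGeneralGroup}, after checking its two hypotheses for $\+A=\langle\Z;+\rangle$. First, $\langle\Z;+\rangle$ is c-residually finite by Corollary~\ref{cor:Z residually finite}. More directly: every congruence is either equality, which is the intersection of all modular congruences $\equiv\pmod k$, $k\geq1$, or a modular congruence, which has finite index $k$. Second, $t(x,y,z)=x-y+z$ is a Malcev term (Example~\ref{ex Malcev}). Strictly speaking, $-$ is not among the operations of $\langle\Z;+\rangle$. So I would first observe that the only fact used downstream, namely Proposition~\ref{p Malcev} (stable preorders are congruences), still holds. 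If $x\preceq y$, stability under the translations $u\mapsto u-x-y$, which lie in $\freez(+)$ on $\Z$, gives $-y\preceq -x$. Hence $-$ preserves every stable preorder, and the argument of Proposition~\ref{p Malcev} goes through.

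Next, I would match the two sides of the statement with items of Theorem~\ref{th:RecLatGeneralGroup}. On the left, ``$f$ is $+$-congruence preserving'' is item $(iii)$. On the right, ``for every recognizable $L$, the lattice $\Latt_{\langle\Z;+\rangle}(L)$ is closed under $f^{-1}$'' must be compared with item $(v)$, which only asks that $f^{-1}(L)\in\Latt_{\langle\Z;+\rangle}(L)$.

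Closure trivially gives $(v)$, since $L$ itself belongs to the lattice. For the converse I would use Lemma~\ref{l:lattice L Z}. Every $M\in\Latt_{\langle\Z;+\rangle}(L)$ is recognizable and of the form $G+k\Z$, where $k$ is the minimal period of $L$. Then $(v)$ applied to $M$ gives $f^{-1}(M)\in\Latt_{\langle\Z;+\rangle}(M)$. Every element of $\Latt_{\langle\Z;+\rangle}(M)$ is again a union of classes mod $k$, hence lies in $\Latt_{\langle\Z;+\rangle}(L)$ by the explicit description in Lemma~\ref{l:lattice L Z}. The trivial cases $L=\emptyset$ and $L=\Z$ are handled by the last clause of that lemma, together with $f^{-1}(\emptyset)=\emptyset$ and $f^{-1}(\Z)=\Z$.

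As a sanity check, and as an alternative route avoiding Theorem~\ref{th:RecLatGeneralGroup}, one can argue directly from Lemma~\ref{lem:CPsurA}. Suppose $f$ is congruence preserving and $L=F+k\Z$. Then $x\equiv y\pmod k$ implies $f(x)\equiv f(y)\pmod k$, so $f^{-1}(G+k\Z)$ is a union of classes mod $k$, which is in the lattice. Conversely, closure applied to $L=b+k\Z$ shows that the preimage of each residue class mod $k$ is a union of classes mod $k$. This means $f$ preserves $\equiv\pmod k$ for every $k$, and the equality congruence is preserved trivially.

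The main obstacle I expect is the bookkeeping between ``$f^{-1}(L)\in\Latt(L)$'' and ``$\Latt(L)$ closed under $f^{-1}$'', together with the Malcev-term subtlety for the monoid signature. The direct route via Lemma~\ref{l:lattice L Z} disposes of both cleanly, so I would present that one.
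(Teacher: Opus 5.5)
Your direct route, the one you say you would present, is correct. It also differs genuinely from the paper, which gives no argument beyond invoking $(iii)\Leftrightarrow(v)$ of Theorem~\ref{th:RecLatGeneralGroup}, on the grounds that $\langle\Z;+\rangle$ is c-residually finite and has a Malcev term. You instead use only two facts: the classification of congruences (equality and $\equiv\pmod k$), and the description $\Latt_{\langle\Z;+\rangle}(L)=\{G+k\Z\}$ from Lemma~\ref{l:lattice L Z}. Both directions then reduce to one observation: $f^{-1}$ maps sets saturated mod $k$ to sets saturated mod $k$ if and only if $f$ preserves $\equiv\pmod k$. This argument is self-contained and bypasses stable preorders and residual finiteness entirely. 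It also gives closure of the whole lattice directly, so the bookkeeping between ``$f^{-1}(L)\in\Latt(L)$'' and ``$\Latt(L)$ closed under $f^{-1}$'' disappears. In the forward direction, state explicitly that $k$ is the minimal period of $L$. Otherwise a union of classes mod $k$ need not lie in $\Latt_{\langle\Z;+\rangle}(L)$.

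The patch in your first paragraph, however, is false, and you should drop it rather than leave it as a side remark. Translating $x\preceq y$ by $-x-y$ gives $-y\preceq -x$, so negation \emph{reverses} $+$-stable preorders. The symmetry step of Proposition~\ref{p Malcev}, $y=t(x,x,y)\preceq t(x,y,y)=x$, needs $-x\preceq -y$ instead. In fact the usual order $\leq$ on $\Z$ is a $+$-stable preorder that is not a congruence. So for the signature $\{+\}$, Proposition~\ref{p Malcev} fails and Theorem~\ref{th:RecLatGeneralGroup} does not hold as a whole. For example, $f(x)=-x$ is congruence preserving, but for $L=-\N$ one gets $f^{-1}(L)=\N\notin\Latt^\infty_{\langle\Z;+\rangle}(L)$, so $(iii)\not\Rightarrow(vii)$. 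Your worry about the signature is therefore well founded, and the same objection applies to the paper's appeal to Example~\ref{ex Malcev}~2).

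The particular equivalence $(iii)\Leftrightarrow(v)$ still survives, because it only involves stable preorders whose associated congruence has finite index, and those are congruences. Suppose the associated congruence is $\equiv\pmod k$ and $x\preceq x+g$. Then $x+g\preceq x+2g\preceq\cdots\preceq x+kg\preceq x$, the last step because $x+kg\equiv x\pmod k$. With this, $(iii)\Leftrightarrow(iv)\Leftrightarrow(ii)\Leftrightarrow(v)$ follows from Lemma~\ref{l:residually finite utile}~2) and Theorem~\ref{th:LatticeGen}~2). Your direct route makes all of this unnecessary.
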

%

%\begin{proof} % For any $a\in\Z$, in the algebra $\+Z=\langle  \Z ;+\rangle$, $\gen(a)=\{c+a\mid c\in\Z\}=\Z$. Hence for any function $f\colon\Z\to\Z$, for any $a$, condition $f(a)\in \gen(a)$ trivially holds.
%By $(iii) \Longleftrightarrow (v)$ in Theorem \ref{th:RecLatGeneralGroup}.
% \end{proof}
%
\begin{remark}\label{rem prod}
The results of this section generalize to products of $\langle  \Z ;+\rangle$. As such products contain a Malcev term, see Example\ref{ex Malcev} 2) : for the same type of reason it has been shown in Hu 1969, see \cite{H,burris} that any congruence on a direct product of algebras $\langle  \Z ;+\rangle$ is a product of congruences on $\langle  \Z ;+\rangle$.
\end{remark}
\begin{remark}\label{r:Nnepasse-pas-aZ}
1) The previous result shows that  the equivalence of the analogues of conditions $(1)$ and  $(3)$ of Theorem \ref{thm:ipl}  
hold for $\langle  \Z ;+\rangle$.%for $\langle  \N ;+\rangle$. 
However,  the equivalence with the analogue of condition $(2)$ % and  $(3)$ of Theorem \ref{thm:ipl}  are no longer equivalent when substituting $\langle  \Z ;+\rangle$ for $\langle  \N ;+\rangle$ 
fails as shown by the counterexample exhibited  in 2) below.

2)  It is straightforward to see that, for any   $L$ finite,  $\Latt_{\+Z}(L)$ is the set of {\em all } finite subsets of $\Z$.
 Consider $f:\Z\to\Z$ such that 
$f(k)=2^k$ for $k\in\N$ and $f(x)=x$ if $x<0$.
As $f^{-1}(a)$ is finite for every $a$,  the preimage of any finite subset is a finite subset, and $\Latt_{\+Z}(L)$ is closed under $f^{-1}$. However $f$ is  not congruence preserving: 
for instance, $2-0=2$ does not divide $f(2)-f(0)=2^2-2^0=3$.
%
%
%4)  Theorem  \ref{thm:mainZ1}  does not seem to yield the characterization in terms of closure of some Newton polynomials which we obtained in \cite{cgg14B}.{\pat A SUPPRIMER ?}
\end{remark}

\begin{example} \label{ex:contrexRecNonRat}
1) A function $f$ which is congruence preserving on $\Z$ and takes all its values in $\N$ can be {\it non} congruence preserving when restricted to $\Z$. Consider $f$ defined by $f(x)=(2-x)^2$, being a polynomial, it is congruence preserving on $\Z$, but {\it not} on $\N$: indeed, congruence 
$\sim_{2,2}$ is not preserved by $f$ as $2\sim_{2,2} 4$, but $f(2)=0\not \sim_{2,2} f(4)=2$.

2) Theorem \ref{thm:mainZ1}  does not hold if we substitute  ``regular" for ``recognizable". In {\rm \cite{benois}\/} it is shown that a regular  subset $L$ of $\Z$ is  of the form
$L=L^+\cup (-L^-)$ where $L^+,L^-$ are regular subsets of $\N$,
i.e., $L=-(d+S+d\N)\cup F\cup(d+R+d\N)$ with
$d\geq1$,
$R,S\subseteq\{x\mid0\leq x<d\}$, and
$F\subseteq\{x\mid-d<x<d\}$
(possibly empty).
 Consider the regular set
$L= 6+10 \N$; the
  function $f$ defined by $f(x) = x^2$ is congruence preserving by Lemma \ref{lem:CPsurA}.
The set $f^{-1}(L) = (\{4,6\}+10\N)\cup -(\{4,6\}+10\N)$ 
contains infinitely many negative numbers.
Each set $L-t$ (for $t\in\Z$) contains only finitely many negative numbers
and the same is true for any finite union of finite intersections of $L-t$'s, 
and, in particular, for any set in $\Latt_\+Z(L)$.
Thus, $f^{-1}(L)$ is not in $\Latt_\+Z(L)$.
\end{example}

%
%%%%%%%%%%%%%%%%%%%%%%%%%%%%
\section{Case of rings of $p$-adic  integers}% and profinite integers} 
\label{s ZpZhat}
%%%%%%%%%%%%%%%%%%%%%%%%%%%%
%
For rings $\Z_p$ of $p$-adic integers, % and  profinite integers $\Zhat$, 
the results are similar to those for the ring $\Z$. %It is not the case for the additive group structure.%, where a topology helps in obtaining similar results. %There are  some differences though. 

%{\bf Representation of $p$-adic integers.} Let us recall some basic facts about  integers. 
%%%%%%%%%%%%%%%%%%%%%%%%%%
\subsection{ Basics about $p$-adic integers and their representations}\label{ss p-adicBasics}
%%%%%%%%%%%%%%%%%%%
The set $\Z_p$ of $p$-adic integers  is the projective limit 
${\underleftarrow{\lim}} \langle\Z/p^n\Z;+,\times\rangle$
relative to the projections $\pi_{p^n\!,\,p^m}\colon \Z/p^n\Z\to\Z/p^m\Z$ 
 with $n\geq m$, such that $\pi_{p^n\!,\,p^m}(x)=x\pmod {p^m}$ for 
$x=0,\ldots,p^n-1$. 
 Every $p$-adic integer  can be represented as an infinite sum
 $\sum_{n=0}^\infty a_n p^n$ with $a_n\in\{0,\ldots,p-1\}$. 
 Elements of $\N$ are represented in $\Z_p$ 
 by sums with an infinite tail of $0$'s.
Elements of $\Z\setminus\N$ correspond in $\Z_p$ to  base $p$
representations with an infinite tail of digits all equal to $p-1$.
 Addition is performed with carries as in the finite case (except that it goes from left to right).
 For instance, (writing $a^\omega$ for an infinite tail of digits all equal to $a$)
we have, for $p=3$,  $100110^\omega + 222112^\omega=0^\omega$, and in general, $100110^\omega + (p-1)(p-1)(p-1)(p-2)(p-2)(p-1)^\omega=0^\omega$.

\if12
{\pat est ce qu on laisse ces rappels dont on ne se sert pAS du tout ???
Recall some classical equivalent approaches to
the topological rings of $p$-adic integers,
cf.\! Lenstra \cite{lenstra2}. % and Lang \cite{lang}.

\begin{proposition}\label{p:Zp}
Let $p$ be prime.
The three following approaches lead to isomorphic structures,
called the topological ring $\Z_p$ of $p$-adic integers.
\begin{itemize}
\item
The ring $\Z_p$ is the inverse limit of the following inverse system:
\itemsep0pt\begin{itemize}
\item
the family of rings $\Z/p^n\Z$ for $n\in\N$,
endowed with the discrete topology,
\item
the family of surjective morphisms
$\pi_{p^n,p^m}:\Z/p^n\Z \to \Z/p^m\Z$ for $0{\leq} n\geq m$.
\end{itemize}
%----------------------
\item
The ring $\Z_p$ is the set of infinite sequences $\{0,\ldots,p-1\}^\N$
endowed with the Cantor topology and addition and multiplication
which extend the usual way to perform addition and multiplication
on base $p$ representations of natural integers.
%----------------------
\item
The ring $\Z_p$ is the Cauchy completion of the metric topological ring
$(\N;+,\times)$
relative to the following ultrametric: 
$d(x,x)=0$ and for $x\neq y$, $d(x,y)=2^{-n}$ where $n$ is the $p$-valuation of $|x-y|$,
i.e., the maximum $k$ such that $p^k$ divides $x-y$.
\end{itemize}
\end{proposition}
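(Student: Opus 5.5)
We have to show that the three descriptions of $\Z_p$ in Proposition~\ref{p:Zp} give isomorphic topological rings. The plan is to take the inverse limit $\underleftarrow{\lim}\,\Z/p^n\Z$ as the reference object and build explicit isomorphisms of topological rings from it to the other two models.

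\emph{Step 1: the digit model.} Every element of the inverse limit is a compatible sequence $(x_n)_n$ with $x_n\in\{0,\ldots,p^n-1\}$ and $x_{n+1}\equiv x_n \pmod{p^n}$. Compatibility means exactly that $x_{n+1}=x_n+a_np^n$ for a unique digit $a_n\in\{0,\ldots,p-1\}$. This gives a bijection $\Phi$ with $\{0,\ldots,p-1\}^\N$, sending $(x_n)$ to $(a_n)$, whose inverse sends $(a_n)$ to the truncations $x_n=\sum_{i<n}a_ip^i$. The ring operations on digit sequences are defined by carry propagation. The $n$ first digits of a sum or product depend only on the $n$ first digits of the operands, and they coincide with the base-$p$ digits of $x_n+y_n \bmod p^n$ (resp. $x_ny_n\bmod p^n$). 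Hence $\Phi$ is a ring isomorphism. For the topology, the basic open sets of the inverse limit topology (inherited from the product of discrete spaces) are the fibres of the projections onto $\Z/p^n\Z$. These are exactly the cylinders fixing the first $n$ digits, which generate the Cantor topology. So $\Phi$ is a homeomorphism.

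\emph{Step 2: the completion model.} We define $\iota\colon\N\to\underleftarrow{\lim}\,\Z/p^n\Z$ by $m\mapsto (m \bmod p^n)_n$. It is an injective morphism of semirings. It is an isometry if the limit carries the metric $d((x_n),(y_n))=2^{-k}$, where $k$ is the least index with $x_{k+1}\neq y_{k+1}$. This metric induces the inverse limit topology, and it agrees with the $p$-adic ultrametric on $\N$ because $p^k\mid m-m'$ if and only if $m\equiv m' \pmod{p^k}$.

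\begin{itemize}
\item \emph{Density.} The truncations $x_n$ of any element are natural numbers converging to it, so $\iota(\N)$ is dense.
\item \emph{Completeness.} A Cauchy sequence in the limit eventually stabilises in each coordinate, so it converges.
\end{itemize}

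By the universal property of the Cauchy completion, $\iota$ extends uniquely to an isometric bijection from the completion of $\N$ onto the inverse limit. Addition and multiplication are uniformly continuous for this ultrametric, since $d(xy,x'y')\le\max(d(x,x'),d(y,y'))$. They therefore extend to the completion, and the extension is a ring isomorphism by density and continuity.

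\emph{Main obstacle.} The only delicate point is Step 2, in two places.
\begin{itemize}
\item The completion of the semiring $(\N;+,\times)$ must be a \emph{ring}. The negatives must appear in the limit: $-1$ is the limit of the Cauchy sequence $p^n-1$. The ring identities then pass to the limit by continuity.
\item We must check that multiplication is uniformly continuous even though it is unbounded. The ultrametric inequality $|xy-x'y'|_p\le\max(|x-x'|_p,|y-y'|_p)$ for $p$-adically bounded elements, i.e. all elements of $\N$, settles this.
\end{itemize}
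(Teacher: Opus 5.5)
The paper does not prove this proposition. It is recalled as a classical fact with a pointer to Lenstra, and in the source it sits in a block the authors have switched off, so there is no argument of theirs to compare with.

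Your proof is correct and is the standard one. You use the inverse limit as the hub. Compatible residue sequences correspond to digit sequences through $x_{n+1}=x_n+a_np^n$, and the cylinders fixing the first $n$ digits match the basic open sets of the limit. The completion is realised through the dense isometric embedding $m\mapsto(m \bmod p^n)_n$ into a complete ultrametric space. The bound $d(xy,x'y')\le\max(d(x,x'),d(y,y'))$, which follows from $xy-x'y'=x(y-y')+(x-x')y'$ and $v_p(x),v_p(y')\ge 0$, gives uniform continuity of both operations.

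Three steps would benefit from one more line each.
\begin{itemize}
\item \emph{Surjectivity of the extended isometry.} Its image is complete, hence closed, and it contains the dense set $\iota(\N)$.
\item \emph{The digit-model product is well defined.} The $n$-th digit is computed from the first $n+1$ digits of each factor plus a finite carry from lower positions. This is exactly why it agrees with $x_{n+1}y_{n+1}\bmod p^{n+1}$.
\item \emph{Your ``main obstacle'' is automatic.} Once the extension $\widehat{\iota}$ is a bijection preserving $+$ and $\times$ onto the inverse limit, which is a ring, the completion is a ring by transport of structure. Your explicit construction of $-1$ as $\lim(p^n-1)$, together with $1+(-1)=\lim p^n=0$ and $-x=(-1)\cdot x$, is valid but not needed.
\end{itemize}
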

}
\fi
%%%%%%%%
\subsection{c-Residual finiteness of rings of $p$-adic integers}
\label{ss residual finite in p-adic}
%%%%%%%%
\begin{definition}
 The {\em $p$-adic valuation} of $x\in\Z_p$ is the maximum $k$ such that $p^k$ divides $x$,
i.e., the number of heading zeroes  in the $p$-adic representation of $x$.
\end{definition}
\begin{remark}
The set $U$ of invertible elements of $\Z_p$ consists of all elements with null $p$-adic valuation,
i.e., those with $p$-adic representation $(a_n)_{n\in\N}$ such that $a_0\neq0$.
Thus, every element $x$ of $\Z_p$ can be written 
$x=p^n u$ where $n\in\N$ and $u\in U$.
\end{remark}
\begin{lemma} [cf.\! Lenstra \cite{lenstra2}]\label{ZpPrincipal}  The ring
$\Z_p$ is a principal ring, and all its ideals  are of the form $\+I_n=p^n\Z_p$, with  $n\in\N$, or $\+I=\{0\}$
\end{lemma}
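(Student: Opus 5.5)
The plan is to first show that every nonzero element of $\Z_p$ can be written as $p^n u$ with $u$ invertible. This is the Remark preceding the statement, and I would reprove it from the digit representation. Then I would use this factorization to identify the ideals.

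\emph{Units.} An element $x=\sum_{n\ge0} a_n p^n$ with $a_0\neq 0$ is invertible. To see this, I would solve $xy=1$ digit by digit in the projective limit. Modulo $p^m$, $x$ reduces to an element of $\Z/p^m\Z$ that is prime to $p$, hence invertible there. The inverses are compatible under the projections $\pi_{p^{m+1},p^m}$ by uniqueness of inverses, so they define $y\in\Z_p$ with $xy=1$.

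\emph{Factorization.} For $x\neq0$, let $n$ be its $p$-adic valuation, which is finite because $x\ne 0$ has some nonzero digit. Shifting the digits gives $x=p^n u$ with $u$ having nonzero leading digit, so $u\in U$. I would also check that $\Z_p$ is an integral domain: $p^n u\cdot p^m v=p^{n+m}uv$ with $uv\in U$, and this is nonzero because $p^{n+m}$ times a unit has leading nonzero digit at position $n+m$. This is not strictly needed for principality, only for the ring to be a principal ideal ring in the usual sense.

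\emph{Ideals.} Let $\+I\neq\{0\}$ be an ideal and let $n$ be the minimal valuation of its nonzero elements. Pick $x=p^n u\in\+I$. Then $p^n=u^{-1}x\in\+I$, so $p^n\Z_p\subseteq\+I$. Conversely, every nonzero $y\in\+I$ has the form $p^m v$ with $m\ge n$, so $y=p^n(p^{m-n}v)\in p^n\Z_p$. Hence $\+I=\+I_n=p^n\Z_p$, and every ideal is principal, generated by $p^n$ or by $0$.

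The only step needing some care is the existence of inverses for elements with $a_0\neq0$, i.e.\ the compatibility argument in the projective limit. Everything else is bookkeeping on valuations. Since the statement is attributed to Lenstra, one could simply cite it, but the argument above is self-contained given the representation recalled in \S\ref{ss p-adicBasics}.
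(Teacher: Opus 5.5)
Your proof is correct and follows essentially the same route as the paper: take an element $p^n u$ of minimal valuation in the ideal, multiply by $u^{-1}$ to get $p^n\Z_p\subseteq \+I$, and use minimality of the valuation for the reverse inclusion. Beyond the paper, you also reprove the unit criterion (which the paper takes from the preceding Remark), and you spell out the inclusion $\+I\subseteq p^n\Z_p$ and the integral-domain property, both of which the paper leaves implicit.
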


\begin{proof}
Let us recall the simple proof.
If $a$ is an element with minimum valuation in an ideal $I$ then $a=p^n u$ for some invertible $u$
and also $I\subseteq p^n\Z_p$.
Let $v$ be an inverse of $u$. Then $I$ contains $av\Z_p=p^nuv\Z_p=p^n\Z_p$.
\end{proof}
\begin{corollary}\label{congZp} The ring $\langle  \Z_p ;+,\times\rangle$ is 
c-residually finite.
\end{corollary}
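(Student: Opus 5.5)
By Lemma~\ref{ZpPrincipal} and Lemma~\ref{lem:CPsurA}~(i), every congruence of $\langle \Z_p;+,\times\rangle$ has the form $\sim_k$ for some $k\in\Z_p$. Since the ideal $k\Z_p$ is either $\{0\}$ or $p^n\Z_p$ for some $n\in\N$, every congruence is either equality or one of the congruences $\equiv_n$ defined by $x\equiv_n y$ iff $x-y\in p^n\Z_p$. The plan is to check Definition~\ref{def:residually finite c sp} for each of these two cases. That is, each congruence should be shown to be an intersection of finite index congruences.

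\textbf{Nontrivial congruences.} The congruence $\equiv_n$ has finite index. Its classes are the cosets of $p^n\Z_p$, and $\Z_p/p^n\Z_p\cong\Z/p^n\Z$ has $p^n$ elements. One way to see this is through the $p$-adic representation: $x\equiv_n y$ exactly when the first $n$ digits of $x$ and $y$ coincide, since $x-y$ has valuation at least $n$ in that case. So $\equiv_n$ is trivially the intersection of the one-element family $\{\equiv_n\}$, and this includes the case $n=0$, the full relation.

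\textbf{Equality.} Equality should be the intersection $\bigcap_{n\in\N}\equiv_n$. One inclusion is clear. For the other, suppose $x\equiv_n y$ for every $n$. Then $x-y\in\bigcap_n p^n\Z_p$. So $x-y$ has infinite $p$-adic valuation, which means every digit of $x-y$ is zero, and hence $x-y=0$. This uses the remark that each nonzero $x\in\Z_p$ is $p^n u$ with $u$ a unit and $n$ finite.

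The only possible obstacle is justifying that $\bigcap_n p^n\Z_p=\{0\}$. This follows at once from the digit representation of $\Z_p$ as the projective limit described in \S\ref{ss p-adicBasics}, so I expect the whole argument to be routine.
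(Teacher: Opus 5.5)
Your proposal is correct and follows the same route as the paper. You classify the congruences via Lemma~\ref{lem:CPsurA} and Lemma~\ref{ZpPrincipal}, observe that each nontrivial one has $p^n$ classes, and write equality as the intersection of all the nontrivial ones; you additionally justify $\bigcap_n p^n\Z_p=\{0\}$, which the paper leaves implicit.
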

\begin{proof}  By Lemma \ref{lem:CPsurA} and Lemma \ref{ZpPrincipal} a non trivial congruence on  $\Z_p$ is of the form
$$x\sim_n y\  \Longleftrightarrow\  x-y\in p^n\Z_p.$$
Thus, there are $p^n$ equivalence classes $a+ p^n\Z_p$ with $a\in \{0,\ldots,p^n-1\}$.

 Finally, the trivial congruence $x\sim y \ {\text {if and only if}}\  x=y$ is equal to the intersection of all non trivial congruences.
\end{proof}
Lemmata \ref{lem:CPsurA} and \ref{ZpPrincipal} imply the following
\begin{proposition} \label{th:CP-IDRsurZp} A function $f\colon \Z_p\rightarrow \Z_p$ is congruence preserving on $\langle  \Z_p ;+,\times\rangle$ if and only if it is satisfies the divisibility condition \eqref{eq:f freezing cp}  in Lemma \ref{lem:CPsurA}.
\end{proposition}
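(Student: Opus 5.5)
The plan is to apply Lemma~\ref{lem:CPsurA}~(ii) directly to the ring $\Z_p$, so the only thing to check is that $\Z_p$ satisfies that lemma's hypotheses. $\Z_p$ is a commutative ring, being a projective limit of the commutative rings $\Z/p^n\Z$. Lemma~\ref{ZpPrincipal} says it is principal, its ideals being $p^n\Z_p$ and $\{0\}$. Hence Lemma~\ref{lem:CPsurA}~(ii) gives the result: $f$ is congruence preserving if and only if $x-y$ divides $f(x)-f(y)$ for all $x,y\in\Z_p$.

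For safety I would also recheck part (i) of Lemma~\ref{lem:CPsurA} in this setting. In a ring, congruences correspond bijectively to ideals via $x\sim y \iff x-y\in I$. This holds because $\{x-y\mid x\sim y\}$ is an additive subgroup (the relation is compatible with $+$, and $x\sim y$ implies $0\sim y-x$ after adding $-x-y$, which is obtained by compatibility with a frozen translation). That set is also closed under multiplication by ring elements, by compatibility with $\times$. So every congruence on $\Z_p$ is $\sim_k$ with $k=p^n$, or $k=0$ for equality.

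With this in hand, both directions are immediate.

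\emph{Congruence preserving implies divisibility.} Given $x,y$, take the congruence modulo $(x-y)\Z_p$. Then $x\sim y$ gives $f(x)-f(y)\in(x-y)\Z_p$. The case $x=y$ is trivial, since $0$ divides $0$.

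\emph{Divisibility implies congruence preserving.} If $k\mid x-y$ and $x-y\mid f(x)-f(y)$, then $k\mid f(x)-f(y)$ by transitivity of divisibility.

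I expect no real obstacle. The only subtle point is the identification of congruences with ideals, which uses the additive inverses available in a ring. That step is exactly where this case differs from $\N$, and it is why no over-linearity condition appears.
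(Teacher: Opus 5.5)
Your proposal is correct and matches the paper, which obtains the proposition directly from Lemma~\ref{lem:CPsurA} together with the principality of $\Z_p$ given by Lemma~\ref{ZpPrincipal}. There is one small slip in your optional recheck: to get $0\sim y-x$ from $x\sim y$ you should translate by $-x$, not by $-x-y$, since the latter yields $-y\sim -x$.
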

%
 %A function $f:\N\to\Z$ is $\langle +,\times\rangle$-congruence preserving if for all $x,y$, $x-y$ divides $f(x)-f(y)$.
%
%\begin{proposition}
%Every $\langle +,\times\rangle$-congruence preserving function $f:\N\to\Z$
%extends to  unique $\langle +,\times\rangle$-congruence preserving functions
% $\widehat{f}:\Z_p\to\Z_p$.
%\end{proposition}

%In particular, this answers the problem raised in section \ref{ss contrexPat} and Example~\ref{contrex:NZ}.

%\begin{corollary}
%Every $\langle +,\times\rangle$-congruence preserving function $f:\N\to\Z$
%extends to unique $\langle +,\times\rangle$-congruence preserving functions
%$\widehat{f}_\Z:\Z\to\Z_p$.
%\end{corollary}
%
%However,  for $x\in\Z$,  $\widehat{f}_\Z(x)$  is not necessarily
%in $\Z$  but in $\Z_p$. 
 %%%%%%%%%%%
 \subsection{Recognizability, lattices and congruence preservation  in $\langle  \Z_p ;+,\times\rangle$} \label{ss rec in Zp+}
 %%%%%%%%%%%
 We  give here a simple characterization of recognizable subsets.
%%%%%%%%
\begin{proposition}\label{prop:contZp}
Let $X$ be a subset of $\Z_p$.
The following conditions are equivalent:

$(i)$\ \ 
$X$ is a recognizable subset of the ring $\langle  \Z_p ;+,\times\rangle$.

$(ii)$ \ 
 $X=F+p^n\Z_p$ for some $n\in\N$ and some finite subset $F$ of
$\{0,\ldots,p^n-1\}$.
\end{proposition}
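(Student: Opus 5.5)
The plan is to use Lemma~\ref{l:rec and congru}: $X$ is recognizable if and only if it is saturated with respect to some finite index congruence of $\langle \Z_p;+,\times\rangle$. The finite index congruences are already known from Lemma~\ref{lem:CPsurA} and Lemma~\ref{ZpPrincipal}.

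For $(ii)\Rightarrow(i)$, I take the congruence $\sim_n$ defined by $x-y\in p^n\Z_p$. This is a ring congruence, since it is induced by the ideal $p^n\Z_p$. By the proof of Corollary~\ref{congZp} it has exactly $p^n$ classes $a+p^n\Z_p$ with $a\in\{0,\ldots,p^n-1\}$. A set $X=F+p^n\Z_p$ is a union of such classes, hence saturated. Equivalently, $X=\pi^{-1}(F)$ for the canonical surjection $\pi\colon\Z_p\to\Z/p^n\Z$, which is a ring morphism onto a finite algebra. So $X$ is recognizable by Definition~\ref{def:rec}.

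For $(i)\Rightarrow(ii)$, let $\varphi\colon\Z_p\to\+M$ be a surjective morphism onto a finite algebra with $X=\varphi^{-1}(T)$. Its kernel $\sim$ is a congruence of finite index. By Lemma~\ref{lem:CPsurA}(i) and Lemma~\ref{ZpPrincipal}, $\sim$ is either equality or some $\sim_n$. Equality has infinite index, because $\Z_p$ is infinite (it contains $\N$). Hence $\sim=\sim_n$ for some $n\in\N$; the case $n=0$ gives the trivial congruence with one class. Since $X$ is $\sim_n$-saturated, it is a union of classes $a+p^n\Z_p$. Each class has a unique representative in $\{0,\ldots,p^n-1\}$, namely the truncation of the $p$-adic expansion to its first $n$ digits. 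Let $F$ be the set of representatives of the classes contained in $X$; then $X=F+p^n\Z_p$, and $F$ is finite.

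The only point needing care is that every class $a+p^n\Z_p$ has a representative in $\{0,\ldots,p^n-1\}$. This follows because $x-\sum_{i<n}a_ip^i$ has its first $n$ digits equal to zero, so it is divisible by $p^n$. Also, Lemma~\ref{lem:CPsurA}(i) is stated for congruences of the ring $\langle \Z_p;+,\times\rangle$, which is the signature used here, so it applies. I expect no real obstacle beyond this bookkeeping.
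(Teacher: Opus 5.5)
Your proof is correct and follows the paper's route. As in the paper, you identify the kernel of a surjective morphism onto a finite algebra as $p^n\Z_p$ via principality of $\Z_p$ (equality is excluded because it has infinite index), write $X$ as a finite union of cosets $a+p^n\Z_p$, and use the projection $\Z_p\to\Z/p^n\Z$ for the converse. You work with the kernel congruence instead of the ideal $\varphi^{-1}(0_M)$ and truncate representatives into $\{0,\ldots,p^n-1\}$, which also settles two points the paper's proof leaves implicit: that $M$ is a ring, and that $F\subseteq\{0,\ldots,p^n-1\}$.
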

\begin{proof}

$(i)\Rightarrow(ii)$.
Assume $(i)$ and let $\varphi:\Z_p\to M$ be a surjective $\{+,\times\}$-morphism
with $M$ finite. % (cf Definition \ref{def:rec}).
Since $\Z_p$ is a ring  so is $M=\varphi(\Z_p)$.
Let $K=\varphi^{-1}(0_M)$ %(where $0_M=\varphi(0)$) 
be the kernel of $\varphi$. $K$  is an ideal which is not reduced to $\{0\}$, otherwise $\varphi$ would be injective and $M$ infinite. Hence by Lemma \ref{ZpPrincipal}, 
% Let $n$ be the smallest among the  $p$-adic  valuations  of the elements of $K$.
 %Then $K\subseteq p^n \Z_p$. Let $z\in K$ have this smallest $p$-adic valuation: $z=p^n u$ with $u\in U$. We have for all $v\in\Z_p$, $p^n v=p^nu\times u^{-1}v=z\times u^{-1}v$, hence, as $K$ is an ideal and $z\in K$, we have $K\supseteq p^n\,\Z_p$, whence
 $K=p^n\,\Z_p$.
%Since every element $x=(a_k)_{k\in\N}\in\Z_p$ can be written
%$x=a+p^n y$ with $a=(a_0,\ldots,a_{n-1},0,0,\ldots)\in\{0,\ldots,p^n-1\}$ and 
%$y=(a_{n+k})_{k\in\N}\in \Z_p$, we get
As $\varphi^{-1}(\varphi(x))=x+K$, we get
$\varphi^{-1}(\varphi(x))=x+p^n\Z_p$.
As $M$ is finite, any $T\subset M$ is a finite union $T=\bigcup_{i=1,\ldots,k}\{\varphi(x_i)\}$.
Any $\{+,\times\}$-recognizable subset $Z$ of $\Z_p$ is thus a finite union of such
$\varphi^{-1}(\varphi(x_i)),\ i=1,\ldots,k$, i.e.,  
$Z=\bigcup_{i=1}^{i=k}(x_i+p^{n}\Z_p)=F+p^n\Z_p$ where
%$n=\max\{n_1,\ldots,n_k\}$ and 
%$F=\bigcup_{i=1}^{i=k}(x_i+\{0,\ldots,p^{n-n_i}-1\})$.
$F=\{x_1,\ldots,x_k\}$.
This proves condition $(ii)$.
\smallskip\\
 $(ii)\Rightarrow(i)$.
It suffices to consider $M=\Z/p^n\Z$ and $\varphi$  the modular projection
$(a_k)_{k\in\N}\mapsto(a_0,\ldots,a_{n-1},0,0,\ldots)$.
\end{proof}
%

%%%%%%%%%%%%%%%%%%%%%%%%%%%%
%%%%%%%%%%%%%%%%%%%%%%%%%%%%
%%%%%%%%%%%%%%%%%%%%%%%%%%%%
%\subsection{Congruence preserving functions and lattices} \label{ss latticesZp}

%%%%%%%%%%%%%%%%%%%%%%%%%%%%
%%%%%%%%%%%%%%%%%%%%%%%%%%%%
%%%%%%%%%%%%%%%%%%%%%%%%%%%%
%
 Theorem~\ref{th:RecLatGeneralGroup} and Corollary \ref{congZp} allow 
to extend  Theorem \ref{thm:mainZ1}   to the ring $ \Z_p$.

\begin{proposition}\label{prop:CPlatZp}
 A function $f:\Z_p\to\Z_p$ is $\langle \Z_p;+,\times\rangle$-congruence preserving if and only if for every  recognizable subset $L$ of $\Z_p$, the lattice $\Latt_{\langle \Z_p;+,\times\rangle}(L)$ is  closed under $f^{-1}$.
\end{proposition}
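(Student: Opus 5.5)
The plan is to obtain this as a direct instance of Theorem~\ref{th:RecLatGeneralGroup}, equivalence $(iii)\Leftrightarrow(v)$, applied to $\+A=\langle \Z_p;+,\times\rangle$. For this I only need to check the two hypotheses of that theorem: $\+A$ is c-residually finite, and $\+A$ has a Malcev term.

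Residual finiteness is exactly Corollary~\ref{congZp}: every non trivial congruence is some $\sim_n$, which has finite index $p^n$, and equality is the intersection of all the $\sim_n$. For the Malcev term, I cannot simply quote Example~\ref{ex Malcev}, since it concerns groups and a Malcev term must be a term in the signature $\{+,\times\}$. The natural candidate $x-y+z$ uses subtraction, which is not a basic operation. The point to address is how to express $-y$ as a term.

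In $\Z_p$, the element $-1$ is itself an element of the carrier, namely the $p$-adic integer $(p-1)^\omega$. So I would take the polynomial $t(x,y,z)=x+(-1)\times y+z$, using the constant $-1$. This settles the issue provided that terms with constants (polynomials) are allowed in Definition~\ref{def Malcev}. The place where Proposition~\ref{p Malcev} uses $t$ is the step from stability to compatibility with $t$. A stable reflexive relation is compatible with constants by reflexivity, and with $+$ and $\times$ by stability, hence with any polynomial. So the proof of Proposition~\ref{p Malcev} goes through verbatim with $t$ a polynomial. Alternatively, one can bypass Malcev entirely: Lemma~\ref{lem:CPsurA} together with Lemma~\ref{ZpPrincipal} gives that stable relations are ideal-based, and every stable reflexive relation is a congruence because $x\triangleleft y$ implies $x+(-1)x\triangleleft y+(-1)x$, i.e.\ $0\triangleleft y-x$. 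I expect this verification, that the Malcev argument survives with a polynomial rather than a pure term, to be the only nonroutine point.

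Once both hypotheses hold, Theorem~\ref{th:RecLatGeneralGroup} gives:
\begin{itemize}
\item $(iii)\Rightarrow(v)$: a congruence preserving $f$ satisfies $f^{-1}(L)\in\Latt_{\+A}(L)$ for every recognizable $L$. Since $\Latt_{\+A}(L)$ is finite and closed under preimages by $\DUO(\+A)$, the disjunctive normal form of Lemma~\ref{l:normal LAL} upgrades membership of $f^{-1}(L)$ to closure of the lattice under $f^{-1}$. Indeed, any $M\in\Latt_{\+A}(L)$ is recognizable by Lemma~\ref{bool:saturated}, and $\Latt_{\+A}(M)\subseteq\Latt_{\+A}(L)$, so $f^{-1}(M)\in\Latt_{\+A}(M)\subseteq\Latt_{\+A}(L)$.
\item $(v)\Rightarrow(iii)$: closure of $\Latt_{\+A}(L)$ under $f^{-1}$ trivially gives $f^{-1}(L)\in\Latt_{\+A}(L)$, which is condition $(v)$, and hence $(iii)$.
\end{itemize}
Proposition~\ref{th:CP-IDRsurZp} then identifies congruence preservation with the divisibility condition \eqref{eq:f freezing cp}, if one wants the explicit form.
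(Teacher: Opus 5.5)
Your proposal is correct and follows the paper's route: the paper obtains this proposition directly from Theorem~\ref{th:RecLatGeneralGroup} combined with Corollary~\ref{congZp}. You also make explicit two points the paper leaves implicit, and both are sound. First, the Malcev term for $\langle\Z_p;+,\times\rangle$ needs the constant $-1$, and Proposition~\ref{p Malcev} still holds for such a polynomial. Second, condition $(v)$ upgrades to closure of $\Latt_{\+A}(L)$ under $f^{-1}$, because every $M\in\Latt_{\+A}(L)$ is recognizable and satisfies $\Latt_{\+A}(M)\subseteq\Latt_{\+A}(L)$.
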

%\begin{proof} By  Corollary \ref{congZp} and Theorem~\ref{th:RecLatGeneralGroup}
 %, noting that the condition $f(a)\in \gen(a)$ holds because, $\Z_p$ being a group, for any $a\in\Z_p$ we have $\gen(a)=\Z_p$.
%\end{proof}
%

We do not know whether  Proposition \ref {prop:CPlatZp}  extends to the ring of profinite integers $\Zhat$. As there is no simple characterization of all the ideals of $\Zhat$  \cite{lenstra2}, 
 our proofs of Corollary \ref{th:CP-IDRsurZp} and Proposition \ref{prop:contZp} which rely on c-residual finteness  do not hold for $\Zhat$.  
 We can however obtain  Theorem \ref{th Zhat}, a nice generalization to $\Zhat$  of Theorem \ref{thm:ipl1} which was the first motivation of this whole study.
 
 As $\Zhat$ is a product of algebras with Malcev terms, it has a Malcev term, hence congruences coincide with stable preorders.
 Let us now restrict our attention to  recognizable languages: if $L$ is recognizable, then its syntactic congruence $\sim_L$ has finite index hence its kernel is a finitely generated ideal \cite{lenstra2} which is of the form: 
 $ \prod{p^n} \Z_p$ with n = 0 for all but finitely many $p$\footnote {We gratefully acknowledge the referee's suggestion about this point and Theorem \ref{th Zhat}}.  Theorem~\ref{th:RecLatGeneralGroup} becomes for $\Zhat$:
 
 \begin{theorem}\label{th Zhat}
 Let $f\colon  A\to A$. The following  equivalences hold:
 
  $(i)\Leftrightarrow(iii)\Leftrightarrow(vii)\Leftrightarrow(viii)$\quad
 and\quad
  $(ii)\Leftrightarrow(iv)\Leftrightarrow(v)\Leftrightarrow(vi)$, where

\begin{tabular}{ r  l }
(i) & $f$ is stable preorder preserving,
\\
(ii) & $f$ preserves stable preorders having finite index associated congruences,
\\
(iii) & $f$ is congruence preserving,
\\
(iv) &
$f$ preserves finite index congruences,
\\
(v) & $f^{-1}(L)$ is in the lattice $\Latt_{\+A}(L)$ for every $\+A$-recognizable $L\subseteq A$,
\\
(vi) &
 $f^{-1}(L)$ is in the Boolean algebra $\Bool_{\+A}(L)$ for every $\+A$-recognizable 
$L\subseteq A$,
\\
(vii) &  $f^{-1}(L)$ is in the lattice $\Latt^\infty_{\+A}(L)$ for every $L\subseteq A$,
\\
(viii) &  $f^{-1}(L)$ is in the Boolean algebra $\Bool^\infty_{\+A}(L)$ for every $L\subseteq A$.
\end{tabular}
 \end{theorem}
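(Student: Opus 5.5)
We take $\+A=\langle\Zhat;+,\times\rangle$, so $A=\Zhat$. The plan is to reuse the general results of Section~\ref{s:genResFin} and replace the c-residual finiteness hypothesis of Theorem~\ref{th:RecLatGeneralGroup}, which is not available for $\Zhat$, by the Malcev term alone. First, $\Zhat=\prod_p\Z_p$ is a direct product of rings, each with the Malcev term $x-y+z$ (Example~\ref{ex Malcev}). By Lemma~\ref{l Malcev}, $\Zhat$ has a Malcev term, and Proposition~\ref{p Malcev} then shows that every $\+A$-stable preorder is a congruence. Conversely, every congruence is a stable preorder. So the stable preorders are exactly the congruences, and the stable preorders with finite index associated congruence are exactly the finite index congruences. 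This gives $(i)\Leftrightarrow(iii)$ and $(ii)\Leftrightarrow(iv)$ at once. It also gives, for every $L\subseteq A$, $\leq_L={\sim_L}$, since the syntactic preorder is a stable preorder and its associated congruence is $\sim_L$.

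For the first chain, Theorem~\ref{th:LatticeGen}~1) gives $(i)\Leftrightarrow(vii)$ for an arbitrary algebra, and Theorem~\ref{thm:cp bhull}~1) gives $(iii)\Leftrightarrow(viii)$. Together with $(i)\Leftrightarrow(iii)$ this closes the chain $(i)\Leftrightarrow(iii)\Leftrightarrow(vii)\Leftrightarrow(viii)$.

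For the second chain, Theorem~\ref{th:LatticeGen}~2) gives $(ii)\Leftrightarrow(v)$, and Theorem~\ref{thm:cp bhull}~2) gives $(iv)\Leftrightarrow(vi)$. Combined with $(ii)\Leftrightarrow(iv)$ this yields $(ii)\Leftrightarrow(iv)\Leftrightarrow(v)\Leftrightarrow(vi)$. We could also note directly that for recognizable $L$ the equality $\leq_L=\sim_L$ (Corollary~\ref{coro:stable finite index preorder in group}~4)) makes the two normal forms from Lemmas~\ref{f^-1-dansTreillis} and~\ref{f^-1-dansBA} coincide. Alternatively, $\Latt_\+A(L)$ and $\Bool_\+A(L)$ can be compared using the description of finite index congruences as kernels $\prod_p p^{n_p}\Z_p$ with almost all $n_p=0$. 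Neither observation is logically needed, however.

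The main obstacle is to state honestly which implications hold. Without c-residual finiteness of $\Zhat$, Lemma~\ref{l:residually finite utile} is unavailable, so the two chains cannot be merged. The proof should therefore stop at the two separate equivalence classes stated in the theorem. One must also check that no step silently used residual finiteness. Theorems~\ref{th:LatticeGen} and~\ref{thm:cp bhull} hold for arbitrary algebras, Lemma~\ref{l;recFinite} needs only recognizability, and the Malcev collapse is purely algebraic, so the argument is complete.
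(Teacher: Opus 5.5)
Your proof is correct and follows the paper's own route. The Malcev term makes stable preorders coincide with congruences; for the signature $\{+,\times\}$ this term is the polynomial $x+(-1)\times y+z$, which suffices because stable preorders are reflexive. The two chains then come from Theorems~\ref{th:LatticeGen} and~\ref{thm:cp bhull}, neither of which uses residual finiteness.

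Your closing claim that the chains cannot be merged goes too far: only this method cannot merge them. Congruence preservation only requires $f(x)-f(y)\in(x-y)\Zhat$, and every principal ideal $a\Zhat$ is the intersection of the finite index ideals $n\Zhat$ containing it, so $(iv)\Rightarrow(iii)$ in fact also holds for $\Zhat$.
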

%%%%%%%%%%%%%%%%%%%%%%%%%
%%%%%%%%%%%%%%%%%%%%%%%%%%%%%%%%%%
%%%%%%%%%%%%%%%%%%%%%%%%%%%%%%%%%%
%%%%%%%%%%%%%%%%%%%%%%%%%%%%%%%%%%
\section{Conclusion}\label{s conclusion}
%%%%%%%%%%%%%%%%%%%%%%%%%%%%%%%%%%
%%%%%%%%%%%%%%%%%%%%%%%%%%%%%%%%%%
%%%%%%%%%%%%%%%%%%%%%%%%%%%%%%%%%%
%%%%%%%%%%%%%%%%%%%%%%%%%%%%%%%%%%
For an algebra $\+A$,
we studied the relationships between some lattices  (resp. boolean algebras) related to $\+A$ on the one hand,   and on the other hand functions preserving stable preorders (resp. congruences) on $\+A$: those lattices  (resp. boolean algebras) are the ones generated by recognizable sets  of $\+A$ and closed under preimages by the ``generated functions"  $\freez^*(\Xi)$  of $\+A$. We proved that, for any algebra $\+A$, a function $f$ preserves stable preorders (resp. congruences) of $\+A$ if and only if  these lattices  (resp. boolean algebras)
are closed under preimage by $f$.

Surprisingly, for quite a few usual algebras $\+A$, e.g., those with carriers $\Sigma^*$ ($|\Sigma|\geq 2$),  $\Z,\ \Z_p$, functions  preserving congruences of $\+A$ coincide with functions preserving stable preorders. 
The algebra of natural integers $\langle  \N ;Suc\rangle$ is not among them.  There is a non monotone  congruence preserving function  $g$ (proposition \ref{p:cp non monotone}) ; by theorem \ref{prop:miracle}  $g$  cannot be stable order preserving , and it cannot preserve lattices by preimages because of the equivalence of theorem \ref{th:LatticeGen}.

One can ask in which other algebras this result holds: i.e., when can
lattices  be substituted for boolean algebras for congruence preservation.

\vskip 1em

\noindent{\bf Acknowledgments} We thank the  referee  for careful reading and  extremely useful comments.

\vskip -1cm

%%%%%%%%%%%%%%%
%%%%%%%%%%%%%%
\end{document}